\documentclass[11 pt]{amsart}
\usepackage{latexsym,amscd,amssymb, graphicx, shuffle, young, float}  
\usepackage[margin=1in]{geometry}

\numberwithin{equation}{section}
\newtheorem{theorem}{Theorem}[section]
\newtheorem{proposition}[theorem]{Proposition}
\newtheorem{corollary}[theorem]{Corollary}
\newtheorem{lemma}[theorem]{Lemma}
\newtheorem{conjecture}[theorem]{Conjecture}
\newtheorem{observation}[theorem]{Observation}

\newtheorem{problem}[theorem]{Problem}
\newtheorem{example}[theorem]{Example}
\newtheorem{remark}[theorem]{Remark}

\newtheorem{defn}[theorem]{Definition}
\theoremstyle{definition}

\newcommand{\Ind}{{\mathrm {Ind}}}
\newcommand{\Res}{{\mathrm {Res}}}
\newcommand{\val}{{\mathrm {val}}}

\newcommand{\Stab}{{\mathrm {Stab}}}

\newcommand{\Cat}{{\mathsf{Cat}}}
\newcommand{\Nar}{{\mathsf{Nar}}}

\newcommand{\symm}{{\mathfrak{S}}}

\newcommand{\CC}{{\mathbb {C}}}
\newcommand{\ZZ}{{\mathbb {Z}}}
\newcommand{\NN}{{\mathbb{N}}}
\newcommand{\QQ}{{\mathbb{Q}}}


\begin{document}

\title[A skein action of the symmetric group on noncrossing partitions]
{A skein action of the symmetric group on noncrossing partitions}

\author{Brendon Rhoades}
\address
{Deptartment of Mathematics \newline \indent
University of California, San Diego \newline \indent
La Jolla, CA, 92093-0112, USA}
\email{bprhoades@math.ucsd.edu}

\begin{abstract}
We introduce and study a new action of the symmetric group $\symm_n$ on the vector space spanned by noncrossing partitions
of $\{1, 2, \dots, n\}$ in which the adjacent transpositions $(i, i+1) \in \symm_n$ act on noncrossing partitions by means of skein relations.
We characterize the isomorphism type of the resulting module
and use it to obtain new representation theoretic proofs of cyclic sieving results due to Reiner-Stanton-White and Pechenik 
for the action of rotation on various classes of noncrossing partitions
and the action of K-promotion on two-row rectangular increasing tableaux.
Our skein relations generalize the Kauffman bracket (or Ptolemy relation) and can be used to 
resolve any set partition as a linear combination of noncrossing partitions
in a
$\symm_n$-equivariant way.
\end{abstract}

\keywords{cyclic sieving, noncrossing partition, promotion, rotation, skein relation, symmetric group}
\maketitle

\section{Introduction}
\label{Introduction}

The  purpose of this paper is to advertise a new action of the symmetric group
$\symm_n$ on the Catalan-dimensional
 $\QQ$-vector space $V(n)$ with basis given by the set of noncrossing partitions of 
$[n] := \{1, 2, \dots, n\}$.  We highlight several properties of the module to be constructed.
\begin{enumerate}
\item  The action of $\symm_n$ on $V(n)$ will be combinatorial in that the adjacent transpositions $s_i = (i, i+1) \in \symm_n$
will act on noncrossing partitions by means of easily visualized {\it skein relations}.
The simplest of these skein relations is the  {\it Ptolemy relation} or {\it Kauffman bracket} shown below which appears in various places 
in topology, algebra, and combinatorics. 
\begin{center}
\includegraphics[scale = 0.5]{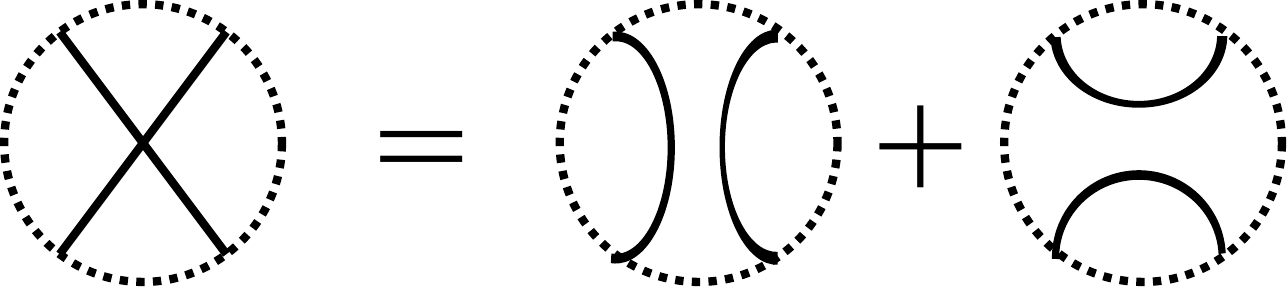}
\end{center}
We leave much of the topological, algebraic, and combinatorial application of our generalized skein
relations as a topic of future study.  To emphasize the role skein relations play in our construction, we call the natural basis of $V(n)$
given by the set of noncrossing partitions the {\it skein basis}.  The representing matrices for permutations in $\symm_n$ with respect
to the skein basis will have entries in $\ZZ$.
\item  Our skein relations will give us a systematic way to ``resolve" {\it any} set partition of $[n]$ as a $\ZZ$-linear combination of noncrossing partitions.
This resolution will be a generalization of the Ptolemy relation shown above; here is an example for $n = 8$.
\begin{center}
\includegraphics[scale = 0.3]{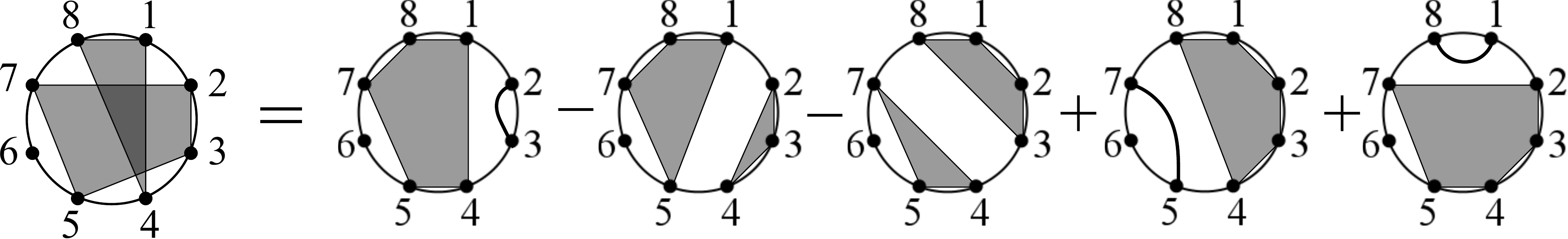}
\end{center}
This resolution is compatible with the natural $\symm_n$-action on all set partitions of $[n]$ in that it affords an $\symm_n$-epimorphism
from (a partially sign-twisted version of) this action to the module $V(n)$.  As far as we know, this resolution of crossings is new at this level of generality.
\item  Our construction will  generalize a natural representation of the Temperley-Lieb algebra $TL_n(x)$  and 
the Kazhdan-Lusztig cellular representation of shape given by a two-row rectangle.  Our skein basis will yield a new ``canonical basis"
of the {\it flag shaped} irreducible representations of $\symm_n$, i.e., those irreducible representations corresponding to partitions
of the form $\lambda = (k, k, 1^{n-2k})$.  When $n = 2k$ and $\lambda$ is a rectangle, the resulting skein basis will coincide with the KL basis, but these 
bases will be different for general flag shapes $\lambda$. 
To the best of our knowledge, the skein basis gives a new basis for flag shaped $\symm_n$-irreducibles.
\item Our skein basis will be an improvement on the KL basis of flag shaped $\symm_n$-irreducibles in that it will give an algebraic proof of certain
cyclic sieving results involving the action of rotation on various classes of
noncrossing partitions.  These cyclic sieving results were proven by Reiner-Stanton-White \cite{RSWCSP}
and Pechenik \cite{Pechenik}
 using brute force enumeration;
the skein basis will give a representation theoretic model of the combinatorial action, allowing us to get a representation theoretic proof.
The KL basis fails at giving such a representation theoretic model.
\end{enumerate}

Item (4) listed above was the original motivation for our skein construction, so let us explain it in more detail.  Let $X$ be a finite set, let 
$C = \langle c \rangle$ be a finite cyclic group acting on $X$, and let $\zeta \in \CC$ be a root-of-unity having multiplicative order $|C|$.  If 
$X(q) \in \NN[q]$ is a polynomial with nonnegative integer coefficients, the triple $(X, C, X(q))$ is said to {\it exhibit the cyclic sieving phenomenon (CSP)}
if for every integer $d \geq 0$, we have that the size $|X^{c^d}|$ of the fixed point set 
$X^{c^d} = \{x \in X \,:\, c^d.x = x\}$ equals the polynomial evaluation $X(\zeta^d) = [X(q)]_{q = \zeta^d}$.  

If the triple $(X, C, X(q))$ exhibits the CSP, we have that 
$X(1) = [X(q)]_{q = 1} = |X|$, so that $X(q)$ is a $q$-analog of an enumerator for the set $X$.
The CSP was defined by Reiner, Stanton, and White \cite{RSWCSP} in 2004 
and generalizes the earlier {\it $q = -1$ phenomenon} of Stembridge \cite{Stembridge}, 
which corresponds 
to the case where $C = \ZZ_2$ has order two.

In the decade since its definition, the CSP has been highly ubiquitous in combinatorics.  If $X$ is a set of natural combinatorial objects and 
$C$ is a natural cyclic group action on $X$, then the triple $(X, C, X(q)$ has an uncanny tendency to exhibit the CSP when $X(q) = \sum_{x \in X} q^{\textsc{stat}(x)}$ 
is the generating function for some natural statistic $\textsc{stat}: X \rightarrow \NN$.  
ŒThe polynomial $X(q)$ typically also has a product formula which is a natural $q$-analog of a product 
enumerator for $X$, as well as an algebraic interpretation as (for example) the Hilbert series of some natural graded representation of degree $|X|$.

There have been two major approaches to proving that a given triple $(X, C, X(q))$ exhibits the CSP: brute force and algebraic.
If one has explicit knowledge of the action of $C = \langle c \rangle$ on $X$, it is sometimes possible to calculate
the fixed point set sizes $|X^{c^d}|$ directly.  Moreover, it may be possible to directly compute the root-of-unity evaluations 
$X(\zeta^d)$ and show that these numbers coincide.  
This `brute force' approach is how CSPs are typically (but not always) first proven.

In the algebraic approach to proving that a given triple $(X, C, X(q))$ exhibits the CSP, one builds a representation
theoretic model for the action of $C = \langle c \rangle$ on $X$.
More precisely, one seeks a group $G$ and a  representation $V$  of $G$ with a distinguished basis $\{e_x \,:\, x \in X\}$ indexed by the elements of $X$.  
One models the action of $C$ on $X$ by finding a group element $g \in G$ such that for all $x \in X$ we have
$g.e_x = e_{c.x}$, so that the representing matrix for $g$ with respect to our distinguished basis is the permutation matrix for the action of $c$ on $X$.
It follows that for any $d \geq 0$ we have 
$|X^{c^d}| = \mathrm{trace}_V(g^d) = \chi(g^d)$, where $\chi: G \rightarrow \CC$ is the character of the $G$-module $V$.  One then uses tools from algebra
to show that the character evaluation $\chi(g^d)$ agrees with the polynomial evaluation $X(\zeta^d)$.
\footnote{In fact, the representation theoretic models used to prove CSPs can be somewhat looser.  Sometimes one can only show that
$g.e_x = \gamma e_{c.x}$ for some $\gamma \in \CC$, so that $\chi(g^d)$ agrees with $|X^{c^d}|$ only up to a constant.  
Even worse, sometimes one can only show that $g.e_x = \gamma(x) e_{c.x}$, where $\gamma: X \rightarrow \CC$ is some function
(hopefully as simple as possible).}

Algebraic proofs of CSPs are generally preferable to brute force proofs.  There are some instances where the action of $C$ on $X$ is sufficiently complicated
such that an algebraic proof is the only proof known.  Even when a brute force proof can be found, algebraic proofs tend to be more elegant and give
representation theoretic intuition for ``why" one should expect a given CSP to hold.  
Conversely, the fact that a triple $(X, C, X(q))$ exhibits the CSP has 
combinatorially foreshadowed various new results in representation theory.

We will use our skein module $V(n)$ to give algebraic proofs
of several CSPs due to Reiner-Stanton-White \cite{RSWCSP}
and Pechenik \cite{Pechenik}. 
 The most basic of these CSPs is for the action of rotation on noncrossing partitions of $[n]$.
Noncrossing partitions of $[n]$ are counted by the {\it Catalan number} $\Cat(n) := \frac{1}{n+1}{2n \choose n}$.  The polynomial appearing in the corresponding 
CSP is a natural $q$-analog of $\Cat(n)$.  We use the standard $q$-analog notation:
\begin{align}
[r]_q &:= 1 + q + q^2 + \cdots + q^{r-1}  = \frac{1-q^r}{1-q}, \\
[m]!_q &:= [m]_q [m-1]_q \cdots [1]_q,  \\
{n \brack k}_q &:= \frac{[n]!_q}{[k]!_q [n-k]!_q}.
\end{align}
The following result can be derived from \cite[Theorem 7.2]{RSWCSP}
 by summing over $k$ and applying the $q$-Chu-Vandermonde convolution;
 for completeness, we perform this calculation at the end of Section~\ref{Applications}.

\begin{theorem}
\label{catalan-csp}  (Reiner-Stanton-White \cite{RSWCSP})
Let $X$ be the set of noncrossing partitions of $[n]$ and let $C = \ZZ_n$ act on $X$ by rotation.  The triple $(X, C, X(q))$ exhibits the 
cyclic sieving phenomenon, where
\begin{equation}
X(q) = \Cat_q(n) := \frac{1}{[n+1]_q}{2n \brack n}_q
\end{equation}
is the {\em $q$-Catalan number}.
\end{theorem}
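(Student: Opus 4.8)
The plan is to pursue the representation-theoretic strategy advertised above: build a model of the rotation action on $X$ inside the skein module $V(n)$. The key structural input is the $\symm_n$-equivariant epimorphism from the (partially sign-twisted) permutation module $\widetilde W(n)$ on \emph{all} set partitions of $[n]$ onto $V(n)$ mentioned in point (2), which carries the set-partition basis vector $e_B$ to the skein basis vector $e_B$ whenever $B$ is noncrossing. The long cycle $c = s_1 s_2 \cdots s_{n-1} = (1,2,\dots,n)$ acts on $[n]$ by $i \mapsto i+1$, hence permutes set partitions exactly by the rotation map $\mathrm{rot}$; since on $\widetilde W(n)$ each $s_i$ merely rescales a basis vector by $\pm 1$ before permuting it, $c$ acts on $\widetilde W(n)$ by $c \cdot e_B = \varepsilon(B)\, e_{\mathrm{rot}(B)}$ for an explicit sign $\varepsilon(B) \in \{\pm 1\}$ accumulated over the $n-1$ elementary swaps. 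Pushing this through the epimorphism, and using that $\mathrm{rot}(B)$ is noncrossing when $B$ is, gives $c \cdot e_B = \varepsilon(B)\, e_{\mathrm{rot}(B)}$ in $V(n)$. The hard part will be the ensuing sign bookkeeping: one must show that the product of the signs accumulated around each rotation orbit is trivial (equivalently, that the signs can be absorbed by rescaling the skein basis along orbits), so that no nontrivial root-of-unity scalar survives — this is exactly the point at which the skein basis is supposed to behave better than the Kazhdan--Lusztig basis.

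Granting that, for every $d \geq 0$ we get $|X^{\mathrm{rot}^d}| = \mathrm{trace}_{V(n)}(c^d) = \chi_{V(n)}(c^d)$, and the problem becomes a character evaluation. Using the decomposition of $V(n)$ into (flag-shaped) irreducibles established earlier together with the Murnaghan--Nakayama rule, one computes $\chi_{V(n)}(c^d)$: the element $c^d$ has cycle type with all cycles of the common length $n/\gcd(n,d)$, so only very constrained border-strip tableaux contribute and a closed form in $\gcd(n,d)$ emerges. It then remains to verify, with $\zeta = e^{2\pi i/n}$, that $\chi_{V(n)}(c^d) = [\Cat_q(n)]_{q=\zeta^d}$; the right-hand side is handled by the standard cyclotomic ($q$-Lucas) evaluation of ${2n \brack n}_q / [n+1]_q$ at roots of unity, after which one matches the resulting product against the character value.

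A shorter, purely combinatorial route — the one we carry out at the end of Section~\ref{Applications} for completeness — avoids $V(n)$ entirely. By Theorem~7.2 of \cite{RSWCSP}, for each $k$ the set $X_k$ of noncrossing partitions of $[n]$ with exactly $k$ blocks, under rotation, exhibits the CSP with a $q$-Narayana polynomial $\mathrm{Nar}_q(n,k)$. Since rotation preserves the number of blocks, $X = \bigsqcup_k X_k$ as $\ZZ_n$-sets, so $|X^{\mathrm{rot}^d}| = \sum_k |X_k^{\mathrm{rot}^d}| = \sum_k [\mathrm{Nar}_q(n,k)]_{q=\zeta^d} = \big[\sum_k \mathrm{Nar}_q(n,k)\big]_{q=\zeta^d}$, and one finishes with the identity $\sum_k \mathrm{Nar}_q(n,k) = \Cat_q(n)$ — a $q$-refinement of $\sum_k N(n,k) = \Cat(n)$ that follows from the $q$-Chu--Vandermonde convolution. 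The only real care here is getting the $q$-shifts in the summands right so that this $q$-summation identity holds exactly.
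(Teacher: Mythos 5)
Your third paragraph is exactly the paper's proof: Theorem~\ref{catalan-csp} is derived at the end of Section~\ref{Applications} by summing the Narayana CSP over $k$ in its original RSW-shifted form, checking that the shift $q^{k(k-1)}$ is harmless at the relevant roots of unity because $k(k-1)\equiv 0 \pmod{n/d}$ whenever the fixed-point count is nonzero (argued via free rotation orbits of blocks, or via vanishing of the $q$-binomials), and then applying $q$-Chu--Vandermonde to get $\sum_k q^{k(k-1)}\frac{1}{[n]_q}{n \brack k}_q{n \brack k-1}_q = \Cat_q(n)$ -- so the "care with $q$-shifts" you flag is precisely the Claim the paper proves. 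Your representation-theoretic sketch in the first two paragraphs is not how the paper proves this particular theorem, and its hope that the accumulated signs can simply be absorbed by rescaling along orbits is too optimistic (the long cycle acts as $(-1)^{n+1}$ times rotation, and in the paper's algebraic proof of Theorem~\ref{no-singletons-csp} this sign is cancelled against the fake-degree factor $\zeta^{d\cdot b(\lambda)}$ rather than rescaled away); but since the combinatorial route is the one you actually carry out, your argument is correct and essentially the same as the paper's.
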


There are many interpretations of the $q$-Catalan number $X(q)$ appearing in Theorem~\ref{catalan-csp}.
Up to an overall $q$-shift, it is the generating function for the major index statistic on
 standard Young tableaux with shape given by a $2 \times n$ rectangle.  It is also the fake degree polynomial 
$f^{(n,n)}(q)$ for the $(n,n)$-isotypic component of the coinvariant algebra attached to $\symm_{2n}$ and 
the $q$-hook length formula corresponding to the partition $(n, n)$.

Theorem~\ref{catalan-csp} was  proven in \cite{RSWCSP} by brute force, but has since received numerous algebraic proofs.
In \cite{RhoadesJDT} an algebraic proof was given involving the action of the long cycle $(1, 2, \dots, 2n) \in \symm_{2n}$ on the KL cellular representation
of shape $(n, n)$.  A simpler algebraic proof using the representation theory of Temperley-Lieb algebras was given 
by Petersen, Pylyavskyy, and the author
in \cite{PPR}.
Another algebraic proof can be obtained as a weight space refinement of the results in \cite{RhoadesCluster},
where the representations involved come from type A cluster algebras.
Still another algebraic proof was given by Armstrong, Reiner, and the author  using the representation theory of rational Cherednik algebras  \cite{ARR}.
We will give yet another algebraic proof of this result in this paper.

Since the action of rotation preserves the number of blocks in a set partition, it is natural to  refine 
Theorem~\ref{catalan-csp} to get a CSP for the 
action of rotation on noncrossing set partitions of $[n]$ with precisely $k$ blocks.  Such partitions are counted by
the {\it Narayana numbers} $\Nar(n, k) := \frac{1}{n} {n \choose k}{n \choose k-1}$, and a $q$-analog of the Narayana 
number appears in the corresponding CSP.

\begin{theorem}
\label{narayana-csp} (Reiner-Stanton-White \cite[Theorem 7.2]{RSWCSP})
Let $X$ be the set of noncrossing partitions of $[n]$ with $k$ blocks and let $C = \ZZ_n$ act on $X$ by rotation.  The triple 
$(X, C, X(q))$ exhibits the cyclic sieving phenomenon, where
\begin{equation}
X(q) = \Nar_q(n,k) := \frac{1}{[n]_q}{n \brack k}_q {n \brack k-1}_q
\end{equation}
is the {\em $q$-Narayana number}.
\end{theorem}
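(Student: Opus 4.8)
The plan is to prove Theorem~\ref{narayana-csp} (and then, by summing over $k$, Theorem~\ref{catalan-csp}) by realizing the rotation action of $C = \ZZ_n$ on $X$ inside the skein module $V(n)$. Rotation preserves the number of blocks of a set partition, and the skein relations are compatible with the filtration of $V(n)$ by number of blocks; so it suffices to work with the associated graded piece $V(n)_k$, whose basis consists of (the images of) the noncrossing partitions of $[n]$ with exactly $k$ blocks. In particular $|X| = \dim V(n)_k = \Nar(n,k)$, and the argument divides into two tasks: (i) produce a single element $c \in \symm_n$ acting on the skein basis of $V(n)_k$ by rotation, up to controlled signs; and (ii) compute the traces of the powers of $c$ on $V(n)_k$ and match them with the root-of-unity specializations of $\Nar_q(n,k)$.

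For task (i) I would take $c = (1, 2, \dots, n) \in \symm_n$, write it as a reduced word in the adjacent transpositions $s_i$, and push it through the skein relations one generator at a time, in the spirit of the Temperley--Lieb computation of Petersen, Pylyavskyy, and the author \cite{PPR} and the long-cycle computation on the Kazhdan--Lusztig basis in \cite{RhoadesJDT}. The expected outcome is that $c$ carries each skein basis vector $e_\pi$ to $\varepsilon(\pi)\, e_{\mathrm{rot}(\pi)}$, where $\varepsilon(\pi) \in \{\pm 1\}$ is an explicit sign depending only on the block structure of $\pi$, and that the product of these signs around any rotation orbit is $+1$; granting this, $\mathrm{trace}_{V(n)_k}(c^d) = |X^{c^d}|$ for every $d \geq 0$. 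I expect this to be the crux of the whole argument. The real content is that the \emph{generalized} skein relations --- beyond the Kauffman bracket / Ptolemy relation, where the claim is immediate --- are coherent enough that conjugation by the long cycle merely rotates a noncrossing-partition diagram, and the delicate point is tracking signs through the noncommuting local resolutions. This is exactly where the skein basis outperforms the Kazhdan--Lusztig basis, which (as noted in the introduction) does not support such a model.

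For task (ii) I would appeal to the structure theorem for $V(n)$ developed earlier in the paper, which identifies each $V(n)_k$ explicitly as an $\symm_n$-module. Since $c = (1,2,\dots,n)$ is a Coxeter element of $\symm_n$ --- a regular element of order $n$ --- and $\zeta = e^{2\pi i/n}$ is a primitive $n$-th root of unity, Springer's theory of regular elements (the mechanism behind cyclic sieving for Coxeter elements) expresses the character value $\mathrm{trace}_{V(n)_k}(c^d) = \chi_{V(n)_k}(c^d)$ as the specialization at $q = \zeta^d$ of the fake-degree polynomial $\sum_\mu [\,V(n)_k : S^\mu\,]\, f^\mu(q)$, where $f^\mu(q)$ --- the fake degree of $S^\mu$, equivalently the generating function $\sum_{T \in \mathrm{SYT}(\mu)} q^{\mathrm{maj}(T)}$ or a $q$-hook-length expression --- is as in the discussion after Theorem~\ref{catalan-csp}. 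It then remains to verify the $q$-series identity that, up to an overall power of $q$, this fake-degree polynomial equals $\Nar_q(n,k) = \frac{1}{[n]_q}{n \brack k}_q {n \brack k-1}_q$; this is a $q$-binomial manipulation, and summing it over $k$ against the $q$-Chu--Vandermonde convolution simultaneously recovers the $q$-Catalan evaluation of Theorem~\ref{catalan-csp}. These last, routine computations are carried out in Section~\ref{Applications}.
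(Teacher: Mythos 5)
Your plan is the ``algebraic approach'' that the paper mentions as an alternative but does not actually carry out: the paper proves Pechenik's no-singleton CSP (Theorem~\ref{no-singletons-csp}) algebraically on the \emph{irreducible} module $V(n,k,0)$, using Proposition~\ref{isomorphism-special-case}, Theorem~\ref{long-cycle-action} and Springer's theorem, and then deduces the Narayana CSP purely combinatorially: a partition fixed by $r^d$ splits into its singleton blocks (an $\frac{n}{d}$-fold symmetric subset, handled by the subsets CSP of Theorem~\ref{sets-csp}) and its singleton-free part (handled by Theorem~\ref{no-singletons-csp}); summing the resulting evaluations over the number $s$ of singletons and applying $q$-Chu--Vandermonde gives $\Nar_q(n,k)$. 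Working directly with the reducible module $V(n,k)$ as you propose is viable in principle, but as written your argument has a genuine gap at exactly the point you dismiss as routine.

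The gap is the sign and $q$-shift bookkeeping. The skein action does not produce signs whose product around every rotation orbit is $+1$: by Theorem~\ref{long-cycle-action} (extended to $V(n,k)$ via Theorem~\ref{isomorphism}), $c.\pi=(-1)^{n+1}r(\pi)$ with a \emph{uniform} sign, so when $n$ is even each application of $c$ contributes $-1$, and odd-length rotation orbits exist (e.g.\ $\{\{1,4\},\{2,3\},\{5,6\}\}\in NC(6,3)$ has orbit size $3$), so the orbit sign product is $-1$ and your premise fails. What is true is $\mathrm{trace}_{V(n,k)}(c^d)=(-1)^{d(n+1)}|X^{r^d}|$. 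On the other side, Springer's theorem gives $\mathrm{trace}_{V(n,k)}(c^d)$ as the evaluation at $\zeta^d$ of the fake degree polynomial of $V(n,k)$, which differs from $\Nar_q(n,k)$ by a power of $q$; at $q=\zeta^d$ that power is a nontrivial root of unity in general, so ``up to an overall power of $q$'' is not harmless for a CSP. Reconciling the sign $(-1)^{d(n+1)}$ with these powers of $\zeta$ is not a $q$-binomial manipulation: one must prove that whenever the relevant evaluation (equivalently the fixed-point count) is nonzero one has $k(k-1)\equiv 0 \pmod{\frac{n}{d}}$ (argued in the paper either combinatorially, via free block orbits under $\frac{n}{d}$-fold rotation, or algebraically via Murnaghan--Nakayama), and that this congruence forces the sign and root of unity to cancel. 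This ``Claim'' is the crux of every proof in Section~\ref{Applications} and is missing from your outline. A smaller caveat: your task (i), establishing $c.\pi=\pm r(\pi)$ by pushing a reduced word for $c$ through the skein relations on an arbitrary basis element, is harder than it sounds; the paper only does this directly for the two special elements $\pi_0,\pi_1$ (Lemma~\ref{sign-determination-lemma}) and then finishes with a two-scalar Schur's Lemma argument after restricting to $\symm_{n-1}$ (Theorem~\ref{long-cycle-action}), so you should cite or reproduce that mechanism rather than a basis-by-basis computation.
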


In fact, a slightly different $q$-analog $X(q) = \frac{q^{k(k+1)}  }{[n]_q}{n \brack k}_q {n \brack k-1}_q$ appears in
\cite[Theorem 7.2]{RSWCSP}, but it can be seen combinatorially or algebraically that these 
polynomials agree when evaluated at all relevant roots of unity.
Up to $q$-shift, the polynomial $X(q)$ in Theorem~\ref{narayana-csp} is the generating function for major index
on Dyck paths of size $n$ having exactly $k$ peaks.
It is also a natural $q$-analog of a product formula due to Cayley \cite{Cayley}
counting polygon dissections with a fixed number of diagonals.
Br\"and\'en \cite{Branden} proved that it is also the principal specialization of 
the Schur function $s_{(2^k)}(x_1, x_2, \dots, x_{n-1})$ which gives the Weyl character of the 
irreducible polynomial representation of $GL_{n-1}(\CC)$ of highest weight $(2^k)$.
As such, it is an instance of the $q$-hook content formula corresponding to the partition $(2^k)$.

The proof of Theorem~\ref{narayana-csp} given in \cite{RSWCSP} is brute force.
In contrast to the Catalan case of  Theorem~\ref{catalan-csp}, no algebraic proof of
the Narayana refinement in Theorem~\ref{narayana-csp} has been found; we will give one in this paper with our skein module construction.
A fundamental obstruction to refining the representation theoretic models used to prove Theorem~\ref{catalan-csp}
to get a proof of Theorem~\ref{narayana-csp} is that these models
 involved {\it irreducible} representations.  In this way, the reducibility of our $\symm_n$-module
$V(n)$ is a feature, not a bug.

Just as the Catalan CSP of Theorem~\ref{catalan-csp} follows from the Narayana CSP of 
Theorem~\ref{narayana-csp},
it will turn out that the Narayana CSP of Theorem~\ref{narayana-csp}
will follow from a still more refined CSP obtained by keeping track not just of the total number of blocks, but also the number
of singleton blocks.  The family of such CSPs can easily be derived from the case of no singleton blocks.  
The following 
CSP was proven by Oliver Pechenik.

\begin{theorem}
\label{no-singletons-csp} (Pechenik \cite[Theorem 1.4]{Pechenik})
Let $X$ be the set of noncrossing partitions of $[n]$ with $k$ blocks and no singleton blocks and let $C = \ZZ_n$ act on $X$ 
by rotation.  The triple $(X, C, X(q))$ exhibits the cyclic sieving phenomenon, where
\begin{equation}
X(q) = \frac{[k]_q}{[n-k]_q[n-k+1]_q} \times \frac{[n]!_q}{([k]!_q)^2[n-2k]!_q}.
\end{equation}
\end{theorem}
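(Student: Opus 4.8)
The plan is to place the rotation action of Theorem~\ref{no-singletons-csp} inside the skein module $V(n)$ and then read the phenomenon off the representation theory of $\symm_n$. Write $\lambda = \lambda(n,k) := (k,k,1^{n-2k})$ for the flag shape attached to $(n,k)$. The first input, supplied by the structural analysis of $V(n)$, is that the span of the skein basis vectors $e_\pi$ indexed by the noncrossing partitions $\pi$ of $[n]$ with exactly $k$ blocks and no singleton block is an $\symm_n$-subquotient $W = W(n,k)$ of $V(n)$ with $W \cong S^{\lambda}$ --- this is the skein basis of the flag shaped irreducible promised in the introduction. The second input is that the long cycle $c = (1,2,\dots,n) = s_1 s_2 \cdots s_{n-1} \in \symm_n$ acts on the skein basis by
\[
c . e_\pi \;=\; (-1)^{n-1}\, e_{r(\pi)},
\]
the rotation permutation $r$ of noncrossing partitions up to a global sign, exactly as in the Temperley--Lieb picture of \cite{PPR}. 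Granting these, $c^d$ acts on $W$ as $(-1)^{(n-1)d}$ times the permutation matrix of $r^d$, so for every $d \geq 0$
\[
|X^{c^d}| \;=\; (-1)^{(n-1)d}\,\mathrm{trace}(c^d \mid W) \;=\; (-1)^{(n-1)d}\, \chi^{\lambda}(c^d),
\]
and the CSP for $(X,\ZZ_n,X(q))$ is reduced to the identity $X(\zeta^d) = (-1)^{(n-1)d}\chi^{\lambda}(c^d)$ with $\zeta = e^{2\pi i/n}$.

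Before attacking that identity I would record that the product in the statement is, up to an overall power of $q$, the fake degree polynomial of $\lambda$: a short manipulation of $q$-factorials (write $[k]_q = [k]!_q/[k-1]!_q$ and recognise $[n-k]_q$ and $[n-k+1]_q$ as the hook lengths of the two cells atop the first column of $\lambda$) gives
\[
\frac{q^{b(\lambda)}\,[n]!_q}{\prod_{u \in \lambda}[h(u)]_q} \;=\; q^{\,b(\lambda)}\, X(q), \qquad b(\lambda) := \sum_i (i-1)\lambda_i .
\]
In particular $X(q) \in \NN[q]$, as the definition of the CSP requires, and the target identity becomes an equivalent statement about the root-of-unity evaluation of a $q$-hook length polynomial.

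The remaining step is the standard comparison of an $\symm_n$-character at a power of a Coxeter element with a root-of-unity evaluation, here carried out for the explicit near-hook $\lambda = (k,k,1^{n-2k})$. Put $g = \gcd(n,d)$ and $m = n/g$, so that $c^d$ has cycle type $(m^g)$. On one side, iterate the Murnaghan--Nakayama rule: $\chi^{\lambda}(c^d)$ is a signed sum over tilings of $\lambda$ by $g$ border strips of size $m$; because $\lambda$ is a near-hook this enumeration is explicit --- it vanishes unless the $m$-core of $\lambda$ is empty, and otherwise equals a sign times a short multinomial/factorial count read off the $m$-quotient. On the other side, evaluate $X(\zeta^d)$ by splitting the $q$-integers occurring in the numerator and denominator of the product according to divisibility by $m$: the factors with $m \nmid j$ cancel off in pairs up to a sign, while the factors with $m \mid j$ collapse to ordinary factorials in $g$ after substituting $q = \zeta^d$, mirroring the $m$-core/$m$-quotient bookkeeping term for term. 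Matching the two --- signs included --- yields $X(\zeta^d) = (-1)^{(n-1)d}\chi^{\lambda}(c^d)$ and finishes the proof. One should separately dispose of the degenerate $d$ for which $\lambda$ has nonempty $m$-core (both sides vanish: some surviving numerator $q$-integer evaluates to $0$, and no border-strip tiling exists), and the small cases $k = 0$ ($\lambda = (1^n)$ is the sign representation and $X(q) \equiv 1$), $k = 1$, and $n = 2k$ (where $\lambda$ is a rectangle and $W(n,k)$ is the two-row Temperley--Lieb / Kazhdan--Lusztig module, recovering the setup behind Theorem~\ref{catalan-csp}).

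Granting the structural facts about $V(n)$, which are the technical heart of the paper and are established independently, the genuinely delicate point here is the sign bookkeeping: one must track the Murnaghan--Nakayama signs, the global $(-1)^{n-1}$ in the action of $c$, and the phase contributed by the $q^{b(\lambda)}$-shift, and see them conspire so that one obtains the cyclic sieving phenomenon on the nose rather than up to a scalar. A related nuisance is the ``extra'' factor $\tfrac{[k]_q}{[n-k]_q[n-k+1]_q}$, which is exactly what prevents $X(q)$ from being a plain $q$-multinomial and must be reconciled with the hook lengths of the first column of $\lambda$ when one passes to $q = \zeta^d$.
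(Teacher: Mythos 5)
Your reduction is the same as the paper's: you identify the span of the no-singleton, $k$-block noncrossing partitions inside $V(n)$ with the flag irreducible $S^{(k,k,1^{n-2k})}$ (this is Proposition~\ref{isomorphism-special-case}; note it is an honest submodule $V(n,k,0)$, a direct summand of $V(n)$, not merely a subquotient), you use that the long cycle acts on the skein basis as $(-1)^{n+1}$ times rotation (Theorem~\ref{long-cycle-action}; your $(-1)^{n-1}$ is the same sign), and you recognize $X(q)$ as $q^{-b(\lambda)}$ times the fake degree polynomial of $\lambda=(k,k,1^{n-2k})$, so that everything rests on the identity $X(\zeta^d)=(-1)^{(n-1)d}\chi^{\lambda}(c^d)$. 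Where you diverge is in how that identity is closed. The paper quotes Springer's theorem on regular elements (Theorem~\ref{springer}) to get $\chi^{\lambda}(c^d)=f^{\lambda}(\zeta^d)=\zeta^{d\,b(\lambda)}X(\zeta^d)$ outright, so the only remaining issue is the phase $(-1)^{d(n+1)}\zeta^{d\,b(\lambda)}$; this is handled by the claim that either $k(k-1)\equiv 0 \pmod{n/d}$ or both $|X^{r^d}|$ and $X(\zeta^d)$ vanish (your empty-$m$-core dichotomy is the same statement), after which a short computation gives $d\,b(\lambda)\equiv \frac{dn(n+1)}{2} \pmod n$ and the phases cancel exactly. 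You instead propose to prove the character identity from scratch by matching a Murnaghan--Nakayama border-strip count for the near-hook $\lambda$ against a direct root-of-unity evaluation of the $q$-hook product via $m$-core/$m$-quotient bookkeeping; this buys independence from Springer's theorem at the cost of redoing, by hand, the type-$A$ instance of it for this shape.

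The one genuine shortfall is that this decisive step is asserted rather than executed: ``matching the two, signs included'' is exactly where all the remaining content lies, and you yourself flag the sign/phase bookkeeping as the delicate point without carrying it out. In particular the conspiracy of signs is not automatic: the needed congruence on $b(\lambda)$ modulo $n$ holds only because $k(k-1)\equiv 0\pmod{n/d}$ in the nonvanishing case, so the phase analysis cannot be separated from the vanishing dichotomy. To make the proposal complete, either perform the core/quotient computation and the sign match in detail, or (more economically, and as the paper does) cite Springer's theorem and supply only the dichotomy ``$k(k-1)\equiv 0\pmod{n/d}$ or $X(\zeta^d)=0$'' together with the evaluation of $\zeta^{d\,b(\lambda)}$.
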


In fact, the action involved in the statement of \cite[Theorem 1.4]{Pechenik} is the action of K-promotion on two-row rectangular
increasing tableaux with a fixed maximal entry.
In \cite[Section 2]{Pechenik} an equivariant bijection is given between increasing tableaux under K-promotion
and the action given in Theorem~\ref{no-singletons-csp}, so these results are equivalent.
The action of K-promotion (and the underlying algorithm of K-jeu-de-taquin)
on increasing tableaux gives a K-theoretic analog of the action of 
ordinary promotion and jeu-de-taquin on standard tableaux, so the action
of Theorem~\ref{no-singletons-csp} is of interest in geometry.

The polynomial $X(q)$ appearing in Theorem~\ref{no-singletons-csp} has a number of guises.  It is an instance of the 
$q$-hook length formula corresponding to the flag partition $\lambda = (k, k, 1^{n-2k})$.
As such, it is (up to $q$-shift) the generating function for the major index statistic on standard Young tableaux of 
shape $\lambda$ and the fake degree polynomial for the $\lambda$-isotypic component of the coinvariant algebra for $\symm_n$.

Pechenik proved Theorem~\ref{no-singletons-csp} by brute force and asked for an algebraic proof \cite[Section 5]{Pechenik}.
In the special case $n = 2k$, the set $X$ is the set of noncrossing perfect matchings on the set $[2k]$ and 
a relevant representation theoretic model can be constructed using Kazhdan-Lusztig theory \cite{RhoadesJDT}
or Temperley-Lieb algebras \cite{PPR}.  
Our skein construction will give an algebraic proof of Theorem~\ref{no-singletons-csp} in general. 

In \cite{RhoadesJDT}, the KL cellular basis of $\symm_n$-irreducibles $S^{\lambda}$ of rectangular shape $\lambda$
was used to give an algebraic proof of a CSP for the action of promotion on rectangular standard Young tableaux of 
shape $\lambda$.  
In particular, it was shown that the action of the long cycle $(1, 2, \dots, n) \in \symm_n$ permutes (up to sign) the KL cellular
basis of $S^{\lambda}$ by promotion.
When $\lambda = (k, k)$ is a two-row rectangle, this specializes to an algebraic proof of Theorem~\ref{no-singletons-csp}
in the  case $n = 2k$.
It is natural to guess that when $\lambda = (k, k, 1^{n-2k})$ is a flag shaped partition, the KL cellular basis of 
$S^{\lambda}$ will again give an algebraic proof of Theorem~\ref{no-singletons-csp}.

However, this approach does not work.
Consider the case of $\lambda = (2,2,1,1)$, so that $S^{\lambda}$ is a degree $9$ representation of $\symm_6$.
There is no permutation in $\symm_6$ whose representing 
matrix acting on $S^{\lambda}$ with respect to the KL basis is (a monomial matrix whose nonzero entries give) the permutation matrix for the action of 
rotation on noncrossing partitions of $[6]$ with two blocks and no singleton blocks.

The original motivation for the skein basis of $V(n)$ was to get an algebraic proof of 
Theorems~\ref{narayana-csp} and \ref{no-singletons-csp}.  We have a natural permutation action of the symmetric group
$\symm_n$ on the collection of {\it all} set partitions of $[n]$ (crossing or noncrossing), under which the long cycle $c = (1, 2, \dots, n) \in \symm_n$ acts as rotation.
Since noncrossing partitions carry dihedral symmetry, this action restricts to an action of the dihedral
group $\langle c, w_0 \rangle$ on the set $NC(n)$ of noncrossing partitions of $[n]$, where $w_0$ is the long element in $\symm_n$
whose one-line notation is $w_0 = n(n-1) \dots 1$.  This action preserves the total number of blocks and the number of singleton blocks, so
that the images of  powers $c^d$ of $c$ under the associated dihedral characters give the fixed point set sizes in Theorems~\ref{narayana-csp}
and \ref{no-singletons-csp}.
If one could equate these character evaluations with the appropriate root-of-unity evaluations of $X(q)$, 
Theorems~\ref{narayana-csp} and \ref{no-singletons-csp} would have algebraic proofs.

Unfortunately, the setup of the last paragraph proved insufficient to give algebraic proofs
of Theorems~\ref{narayana-csp} and \ref{no-singletons-csp}.  The  dihedral group $\langle c, w_0 \rangle$  acting on $V(n)$ is ``too small"
to equate the images of powers $c^d$ under its characters with root-of-unity evaluations of the appropriate polynomials.  If this action could 
be extended to the full symmetric group $\symm_n$, then powerful representation theoretic tools (namely Springer's Theorem on regular elements \cite{Springer})
could be used to interpret these character evaluations as root-of-unity evaluations.

However, na\"ive restriction of the $\symm_n$-action on set partitions cannot be used to get an action on $V(n)$ because 
noncrossing partitions of $[n]$ are not closed under the usual permutation action of $\symm_n$ for $n \geq 4$.
From a cyclic sieving point of view, this lack of a $\symm_n$-action is a defect of the noncrossing partitions.  

To remedy this defect,
one might proceed as follows.  For $1 \leq i \leq n-1$ and any noncrossing partition $\pi$ of $[n]$, one could declare $s_i.\pi \in V(n)$ to be some 
linear combination of noncrossing partitions arising from ``resolving"  any crossings formed by swapping $i$ and $i+1$ in $\pi$.  These resolutions
would have to be defined in such a way that
the operators $\{s_i \,:\, 1 \leq i \leq n-1\}$ would satisfy the Coxeter relations.  Moreover, the composition $s_1  \circ s_2 \circ \cdots \circ s_{n-1}$ of
operators given by the action of $c$
would have to act on noncrossing partitions by rotation and one would have to determine the isomorphism type of the $\symm_n$-module 
$V(n)$ to apply Springer's Theorem.  
This all depends on a carefully chosen resolution of crossings $s_i.\pi$, and it is not obvious that such a resolution exists.

This paper is the result of finding such a resolution.  If 
$\pi$ is a noncrossing partition and $1 \leq i \leq n-1$, then
$s_i.\pi \in V(n)$ 
will be a $\ZZ$-linear combination of noncrossing partitions obtained
by a local resolution of any crossings near $i$ in the set partition obtained by
swapping $i$ and $i+1$ in $\pi$.  The skein relations involved in this resolution are a natural generalization
of the Ptolemy relation (which suffices in the special case where $\pi$ is a noncrossing  matching).
The $\symm_n$-module structure afforded on the vector space $V(n)$
will
decompose in a way corresponding to the combinatorial refinements 
shown in Theorems~\ref{catalan-csp}, \ref{narayana-csp}, and \ref{no-singletons-csp}.
We will show that the long cycle $c \in \symm_n$ acts (up to sign) on $V(n)$ by rotation on noncrossing partitions.
More generally, we will show that the action of $\symm_n$ on $V(n)$ preserves {\it local symmetries} in the sense 
that if $\pi$ is a noncrossing partition, $w \in \symm_n$, and the set partition $w(\pi)$ obtained by replacing $i$ with
$w(i)$ in $\pi$ is also noncrossing, then $w$ sends $\pi$ to $w(\pi)$ (up to sign) in the module $V(n)$.

The remainder of the paper is organized as follows.  
In {\bf Section~\ref{Background}} we give background and notation
related to set partitions, noncrossing set partitions, and representations of $\symm_n$.
We also define a slight modification of the standard permutation action of $\symm_n$
on set partitions (the $\star$ action of Lemma~\ref{rhos-satisfy-coxeter}) that will be convenient for our purposes.
In {\bf Section~\ref{Almost}} we introduce the combinatorial notion of an {\it almost noncrossing partition}
and define the four basic skein relations $\sigma_1-\sigma_4$ which are the core of this work.
In {\bf Section~\ref{Symmetric}} we define a collection
$\{\tau_i: V(n) \rightarrow V(n) \,:\, 1 \leq i \leq n-1\}$ of linear endomorphisms of $V(n)$
based on these skein relations and show that they satisfy the Coxeter relations, so that
$s_i \mapsto \tau_i$ gives $V(n)$ the structure of a $\symm_n$-module.
In {\bf Section~\ref{Module}} we determine the isomorphism type of the module $V(n)$.
In {\bf Section~\ref{Long}} we prove that the long cycle $c$ and the long element $w_0$
act on the skein basis of $V(n)$ as plus or minus rotation and reflection, respectively.
In {\bf Section~\ref{Resolving}} we generalize these results on the global symmetries of rotation and reflection to prove that the 
action of $\symm_n$ on $V(n)$ preserves any local symmetry of the noncrossing set partitions (Corollary~\ref{symmetry-corollary}).
This is packaged in terms of a projection of $\symm_n$-modules given in Theorem~\ref{projection-corollary}.
In {\bf Section~\ref{Applications}} we use the module $V(n)$ to get 
representation theoretic proofs of Theorems~\ref{catalan-csp}, \ref{narayana-csp}, and \ref{no-singletons-csp}.
We conclude in 
{\bf Section~\ref{Closing}} by proposing some generalizations of the skein module $V(n)$.

\section{Background}
\label{Background}

Let $\symm_n$ denote the symmetric group on the set $[n]$.  For $1 \leq i \leq n-1$, we let $s_i \in \symm_n$ be the adjacent
transposition $s_i = (i, i+1)$.  The group
$\symm_n$ has a Coxeter presentation with generators $s_1, s_2, \dots, s_{n-1}$ and relations
\begin{equation}
\begin{cases}
s_i^2 = e, & \\
s_i s_j = s_j s_i & |i - j| > 1, \\
s_i s_{i+1} s_i = s_{i+1} s_i s_{i+1}.
\end{cases}
\end{equation}

Let $n, k, s \geq 0$.
We define three collections $\Pi(n), \Pi(n, k),$ and $\Pi(n, k, s)$ of set partitions as follows.
\begin{align}
\Pi(n) &:= \{\text{all set partitions of $[n]$} \}, \\
\Pi(n, k) &:= \{ \text{all set partitions of $[n]$ with $k$ blocks} \}, \\
\Pi(n, k, s) &:= \{ \text{all set partitions of $[n]$ with $k$ blocks and $s$ singletons} \}.
\end{align}
For example, if $\pi$ is the set partition of $[7]$ given by
$\pi = \{ \{1, 3, 4\}, \{2, 7\}, \{5\}, \{6\} \}$, then $\pi \in \Pi(7, 4, 2)$.
We have the disjoint union decompositions $\Pi(n) = \biguplus_k \Pi(n, k)$ and $\Pi(n, k) = \biguplus_s \Pi(n, k, s)$.

We draw set partitions $\pi \in \Pi(n)$ on a disk by labeling the boundary of the disk clockwise with $1, 2, \dots, n$
and connecting cyclically adjacent blockmates in the set partition $\pi$ with arcs.
The pictorial  representations of the set partitions 
$\{ \{1, 6\}, \{2, 5, 4\}, \{3\} \}$ and 
$\{ \{1, 2, 5\}, \{3\}, \{4, 6\} \}$ are shown below.

\begin{center}
\includegraphics[scale = 0.25]{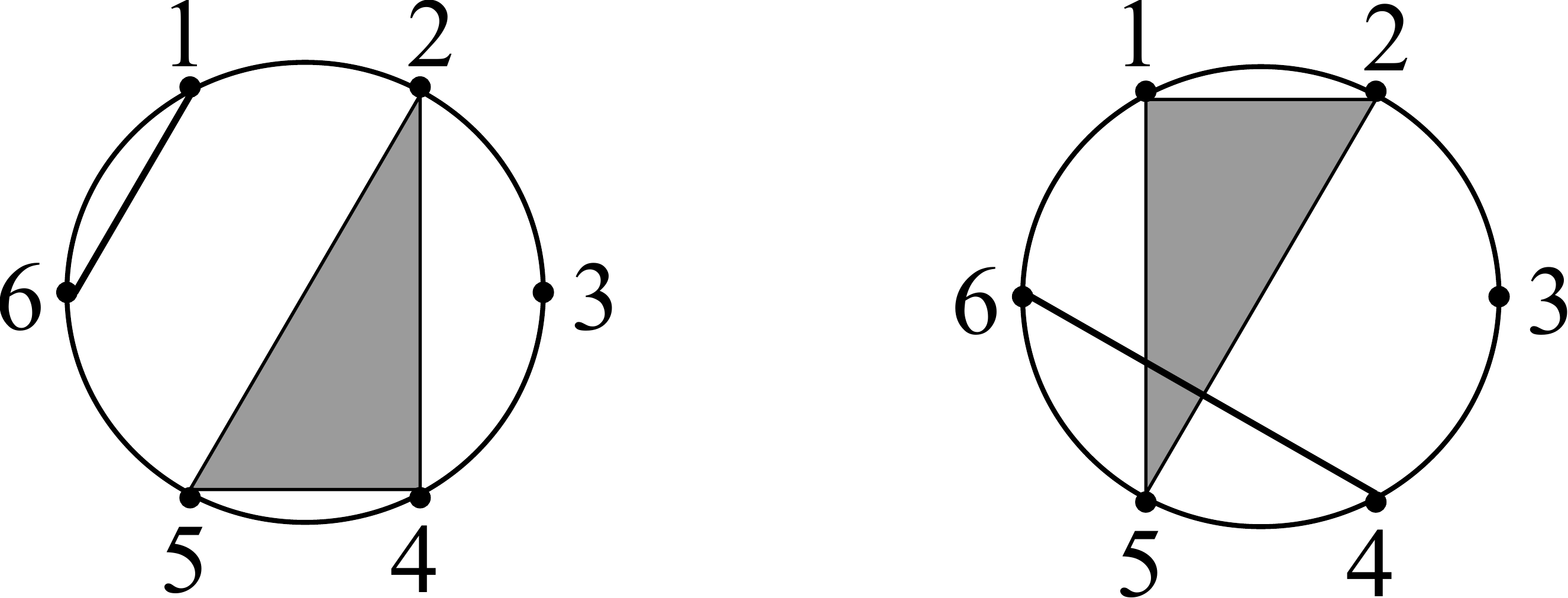}
\end{center}

A set partition $\pi$ is called {\it noncrossing}
if the blocks of $\pi$ do not cross in the interior of the disk.  
For example, the set partition $\{ \{1, 6\}, \{2, 5, 4\}, \{3\} \}$ is noncrossing and the set partition
$\{ \{1, 2, 5\}, \{3\}, \{4, 6\} \}$ is not noncrossing.
Equivalently, the set partition $\pi$ is noncrossing if whenever 
we have indices $1 \leq a < b < c < d \leq n$ such that $a \sim c$ and $b \sim d$ in $\pi$, we necessarily have
$a \sim b \sim c \sim d$ in $\pi$.

Given $n, k, s \geq 0$, we define sets of noncrossing partitions $NC(n), NC(n, k),$ and $NC(n, k, s)$ as follows.
\begin{align}
NC(n) &:= \{\text{all noncrossing partitions of $[n]$} \}, \\
NC(n, k) &:= \{ \text{all noncrossing partitions of $[n]$ with $k$ blocks} \}, \\
NC(n, k, s) &:= \{ \text{all noncrossing partitions of $[n]$ with $k$ blocks and $s$ singletons} \}.
\end{align}

Given $\pi \in \Pi(n)$ and  $1 \leq i \leq n$, we define $B_i(\pi) \subseteq [n]$ to be the block of $\pi$ containing the index $i$.
For example, if $\pi = \{ \{1, 2, 5\}, \{3\}, \{4, 6\} \}$, then
$B_1(\pi) = B_2(\pi) = B_5(\pi) = \{1, 2, 5\}$,
$B_3(\pi) = \{3\}$, and
$B_4(\pi) = B_6(\pi) = \{4, 6\}$.
We also define the {\it valence of $\pi$ at $i$} $\val_i(\pi) \in \{0, 1, 2\}$ to be the number of edges incident to the vertex $i$ when the 
set partition $\pi$ is drawn on a disk.  Equivalently,
\begin{equation}
\val_i(\pi) := \begin{cases}
0 & \text{if $|B_i(\pi)| = 1$,} \\
1 & \text{if $|B_i(\pi)| = 2$,} \\
2 & \text{if $|B_i(\pi)| > 2$.}
\end{cases}
\end{equation}
For example, if $\pi = \{ \{1, 2, 5\}, \{3\}, \{4, 6\} \}$, then
$\val_1(\pi) = \val_2(\pi) = \val_5(\pi) = 2$,
$\val_3(\pi) = 0$,
and $\val_4(\pi) = \val_6(\pi) = 1$.

Given $w \in \symm_n$ and $\pi \in \Pi(n)$, let $w(\pi) \in \Pi(n)$ be the set partition defined by
$w(i) \sim w(j)$ in $w(\pi)$ if and only if $i \sim j$ in $\pi$.
For example, if $\pi = \{ \{1, 2, 5\}, \{3\}, \{4, 6\} \} \in \Pi(6)$ and $w \in \symm_6$ has cycle notation $w = (1, 5, 3)(2,6)$, then
$w(\pi) = \{ \{5, 6, 3\}, \{1\}, \{4, 2\} \}$.
The map $\symm_n \times \Pi(n) \rightarrow \Pi(n)$ given by $(w, \pi) \mapsto w(\pi)$ is a permutation action of 
$\symm_n$ on $\Pi(n)$.
The subsets $\Pi(n, k)$ and $\Pi(n, k, s)$ are stable under this action
of $\symm_n$ for any $k$ and $s$.  We will need a ``partially sign twisted" version of this action; to define this we will have to linearize.

We define three $\QQ$-vector spaces $P(n), P(n, k),$ and $P(n, k, s)$ by taking formal $\QQ$-linear combinations of the corresponding
classes of  set partitions.
That is,
\begin{align}
P(n) &:= \QQ[\Pi(n)], \\
P(n, k) &:= \QQ[\Pi(n, k)], \\
P(n, k, s) &:= \QQ[\Pi(n, k, s)].
\end{align}
Let $V(n), V(n, k),$ and $V(n, k, r)$ be the ``noncrossing" subspaces
\begin{align}
V(n) &:= \QQ[NC(n)], \\
V(n, k) &:= \QQ[NC(n, k)], \\
V(n, k, s) &:= \QQ[NC(n, k, s)].
\end{align}

We define an action of $\symm_n$  on the spaces $P(n), P(n, k),$ and $P(n, k, s)$ as follows. 
For $1 \leq i \leq n-1$, let $\rho_i: P(n) \rightarrow P(n)$ be the linear operator defined on the basis $\Pi(n)$ by
\begin{equation}
\rho_i(\pi) := \begin{cases}
-s_i(\pi) & \text{if $\val_i(\pi), \val_{i+1}(\pi) \geq 1$} \\
s_i(\pi) & \text{else.}
\end{cases}
\end{equation}
In other words, we have that $\rho_i(\pi) = \pm s_i(\pi)$, where the sign is negative unless at least one of the singletons $\{i\}$ or $\{i+1\}$ 
is a block of $\pi$.  
The $\rho_i$ operators assemble to give an action of $\symm_n$ on $P(n)$.

\begin{lemma}
\label{rhos-satisfy-coxeter}
The collection of linear operators $\{\rho_i \,:\, 1 \leq i \leq n-1\}$ satisfy the Coxeter relations.  That is, we have 
$\rho_i^2 = 1$, $\rho_i \rho_j = \rho_j \rho_i$ for $|i - j| > 1$, and $\rho_i \rho_{i+1} \rho_i = \rho_{i+1} \rho_i \rho_{i+1}$.
\end{lemma}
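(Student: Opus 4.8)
The plan is to verify the three families of Coxeter relations directly, in increasing order of difficulty: first $\rho_i^2 = 1$, then the commuting relation $\rho_i \rho_j = \rho_j \rho_i$ for $|i-j| > 1$, and finally the braid relation $\rho_i \rho_{i+1} \rho_i = \rho_{i+1} \rho_i \rho_{i+1}$. Throughout, the key observation is that each $\rho_i$ acts on the basis $\Pi(n)$ by $\pi \mapsto \pm s_i(\pi)$, so composites of $\rho$'s permute $\Pi(n)$ exactly as the corresponding composites of $s_i$'s do, and the entire content is in tracking the signs. It therefore suffices to check that for each relation, the sign picked up along the left-hand word equals the sign picked up along the right-hand word, when applied to an arbitrary $\pi \in \Pi(n)$.

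First I would handle $\rho_i^2 = 1$. Applying $\rho_i$ to $\pi$ produces $\pm s_i(\pi)$; applying $\rho_i$ again produces $\pm s_i(s_i(\pi)) = \pm \pi$. The point is that $\val_i(\pi), \val_{i+1}(\pi) \geq 1$ holds for $\pi$ if and only if the same condition holds for $s_i(\pi)$, since $s_i$ simply interchanges the roles of $i$ and $i+1$ and hence $\val_i(s_i(\pi)) = \val_{i+1}(\pi)$ and $\val_{i+1}(s_i(\pi)) = \val_i(\pi)$. So the two signs are equal, their product is $+1$, and $\rho_i^2(\pi) = \pi$. Next, for $|i-j|>1$, the supports $\{i,i+1\}$ and $\{j,j+1\}$ are disjoint, so $s_i$ and $s_j$ commute as permutations of $\Pi(n)$, and moreover applying $s_j$ does not alter $\val_i$ or $\val_{i+1}$ (and vice versa). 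Hence the sign contributed by $\rho_i$ is unaffected by whether $\rho_j$ has already been applied, and the signs multiply commutatively; this gives $\rho_i \rho_j = \rho_j \rho_i$.

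The braid relation is the main obstacle, and I expect it to require a genuine case analysis. Both sides send $\pi$ to $\pm s_i s_{i+1} s_i(\pi) = \pm s_{i+1} s_i s_{i+1}(\pi)$ (the underlying permutations agree), so I must show the accumulated sign is the same. Along the word $\rho_i \rho_{i+1} \rho_i$ the three signs are governed by the valence data at positions $\{i,i+1\}$, then at $\{i+1,i+2\}$ (after one swap), then at $\{i,i+1\}$ (after two swaps); similarly for the other word. The cleanest approach is to note that the valences $\val_i(\pi), \val_{i+1}(\pi), \val_{i+2}(\pi)$ — equivalently, whether each of $\{i\}, \{i+1\}, \{i+2\}$ is a singleton block of $\pi$, together with whether $i,i+1,i+2$ lie in a common block — are the only data that matter, and that the braid word permutes these three positions as the transposition-word identity in $\symm_3$ acting on $\{i,i+1,i+2\}$. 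So I would reduce to $\symm_3$ acting on partitions of a $3$-element set, enumerate the (few) possibilities for the singleton pattern on $\{i,i+1,i+2\}$ — all three singletons, exactly two, exactly one, or none, with a sub-case for whether $i \sim i+1 \sim i+2$ — and in each case compute the three-fold sign product for each of the two words, checking equality. The bookkeeping is routine once organized this way; the only subtlety is making sure that a swap can change a valence from $0$ to $\geq 1$ only at the two positions being transposed, so valences at the ``third'' position are inert and the reduction to $\symm_3$ is legitimate.
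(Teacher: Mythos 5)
Your proposal is correct and follows essentially the same route as the paper: a direct check on basis elements $\pi \in \Pi(n)$, using that each $\rho_i$ acts as $\pm s_i$ and that the sign at each step depends only on whether $\{i\}$ or $\{i+1\}$ is a singleton block, data which the swaps merely permute. The paper's braid-relation case split (``at least two of $\{i\},\{i+1\},\{i+2\}$ are singleton blocks'' versus ``at most one'') is just a coarser packaging of the singleton-pattern enumeration you describe.
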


\begin{proof}
Each Coxeter relation can be checked directly on the basis $\Pi(n)$ of $P(n)$ by a case-by-case analysis of valences.
Let $\pi \in \Pi(n)$.

For $1 \leq i \leq n-1$, if $\{i\}$ or $\{i+1\}$ is a block of $\pi$, then we have
$\rho_i^2(\pi) = \rho_i(s_i(\pi)) = \pi$.  Otherwise, we have
$\rho_i^2(\pi) =  -\rho_i(s_i(\pi)) = \pi$.

 Suppose $1 \leq i, j \leq n-1$ and $|i - j| > 1$.  We want to show that $\rho_i \rho_j(\pi) = \rho_j \rho_i(\pi)$.  If none of 
 $\{i\}, \{i+1\}, \{j\},$ or $\{j+1\}$ are blocks of $\pi$, we have that
 $\rho_i \rho_j(\pi) =  s_i s_j(\pi) = s_j s_i(\pi) =  \rho_j \rho_i(\pi)$.  If at least one of 
 $\{i\}$ or $\{i+1\}$ is a block of $\pi$, but neither $\{j\}$ nor $\{j+1\}$ is a block of $\pi$, we have that
 $\rho_i \rho_j(\pi) =  -s_i s_j(\pi) = - s_j s_i(\pi) =  \rho_j \rho_i(\pi)$.  If at least one of 
 $\{i\}$ or $\{i+1\}$ and at least one of $\{j\}$ or $\{j+1\}$ are blocks of $\pi$, we have that 
 $\rho_i \rho_j(\pi) =  s_i s_j(\pi) = s_j s_i(\pi) =  \rho_j \rho_i(\pi)$.
 
 Finally, suppose $1 \leq i \leq n-2$.  We want to show that $\rho_i \rho_{i+1} \rho_i(\pi) = \rho_{i+1} \rho_i \rho_{i+1}(\pi)$.
  If at least two of $\{i\}, \{i+1\},$ and $\{i+2\}$ are blocks of $\pi$, we have that
 $\rho_i \rho_{i+1} \rho_i(\pi) 
 = s_i s_{i+1} s_i(\pi) = s_{i+1} s_i s_{i+1}(\pi) 
 = \rho_{i+1} \rho_i \rho_{i+1}(\pi)$.
 If at most one of $\{i\}, \{i+1\},$ and $\{i+2\}$ are blocks of $\pi$, then
 $\rho_i \rho_{i+1} \rho_i(\pi) = - s_i s_{i+1} s_i(\pi) = - s_{i+1} s_i s_{i+1}(\pi) = \rho_{i+1} \rho_i \rho_{i+1}(\pi)$.
\end{proof}

By Lemma~\ref{rhos-satisfy-coxeter}, the rule $s_i \star \pi := \rho_i(\pi)$ induces a well defined action of
$\symm_n$ on $P(n)$.  For any $w \in \symm_n$ and any $\pi \in \Pi(n)$, we have that
$w \star \pi = \pm w(\pi)$.  We will use this action (as opposed to the standard permutation 
action $(w, \pi) \mapsto w(\pi)$
of $\symm_n$ on $\Pi(n)$) to give
$P(n)$ the structure of a $\symm_n$-module.  
For any $\pi \in \Pi(n)$ and $w \in \symm_n$,
the numbers of total  and singleton blocks in $w(\pi)$ is the same as the corresponding number in $\pi$.
Therefore,  
both $P(n, k, s)$ and $P(n, k)$ are submodules of $P(n)$ for any $n, k,$ and $s$. 
On the other hand, the space $V(n)$ is {\em not} closed under the $\star$ action of $\symm_n$ for $n \geq 4$.
This slightly unusual action of $\symm_n$ on $P(n)$ will prove useful for our
purposes.

For $n \geq 0$, a {\em partition  of $n$} is a weakly decreasing sequence
$\lambda = (\lambda_1, \lambda_2, \dots, \lambda_k)$ of positive integers such that
$\lambda_1 + \lambda_2 + \cdots + \lambda_k = n$.
We write $\lambda \vdash n$ to mean that $\lambda$ is a partition of $n$.
We define a statistic $b$ on partitions by
$b(\lambda) := 0 \lambda_1 + 1 \lambda_2 + 2 \lambda_3 + \cdots$.

The {\em Ferrers diagram} of a partition $\lambda = (\lambda_1, \lambda_2, \dots, \lambda_k)$ of $n$
consists of $\lambda_i$ left justified boxes in row $i$.
The Ferrers diagram of $\lambda = (4, 4, 2, 1) \vdash 11$ is shown below on the left.  
The {\em conjugate} $\lambda'$ of a partition $\lambda$ is the partition obtained by reflecting the Ferrers diagram of $\lambda$
across its main diagonal.  For example, the diagram below shows that $(4, 4, 2, 1)' = (4, 3, 2, 2)$. 

\begin{tiny}
\begin{center}
\begin{Young}
& & &  &, &,  & & & & \cr
& & & &, &, & & &  \cr
& &, &, &,  &, & &  \cr
&, &, &, &, &, & &  \cr
\end{Young}
\end{center}
\end{tiny}

We let $\preceq$ be  {\it dominance order} on partitions of $n$.  If $\lambda, \mu \vdash n$, then
$\lambda \preceq \mu$ if for all $i$ we have 
$\lambda_1 + \lambda_2 + \cdots + \lambda_i \leq \mu_1 + \mu_2 + \cdots + \mu_i$,
where we append an infinite string of zeros to the ends of $\lambda$ and $\mu$ so that these sums make sense.

Given $\lambda = (\lambda_1, \lambda_2, \dots, \lambda_k) \vdash n$, we let $\symm_{\lambda} \subseteq \symm_n$
denote the corresponding Young subgroup given by
$\symm_{\lambda} := \symm_{\{1, \dots, \lambda_1\}} \times \symm_{\{\lambda_1 + 1, \dots, \lambda_1 + \lambda_2\}} \times \cdots \times \symm_{\{n-\lambda_k+1, \dots, n\}}$.
For example, we have that 
$\symm_{(4,4,2,1)} = \symm_{\{1,2,3,4\}} \times \symm_{\{5,6,7,8\}} \times \symm_{\{9, 10\}} \times \symm_{\{11\}} \subset \symm_{11}$.

Given any subset $X \subseteq \symm_n$, we define two group algebra elements $[X]_+, [X]_- \in \QQ \symm_n$ by
$[X]_+ := \sum_{w \in X} w$ and $[X]_- := \sum_{w \in X} \mathrm{sign}(w) w$.
When $X = \symm_{\lambda}$ for some $\lambda = (\lambda_1, \dots, \lambda_k) \vdash n$, the group algebra  elements 
$[\symm_{\lambda}]_+$ and $[\symm_{\lambda}]_-$ factor as
$[\symm_{\lambda}]_+ = [\symm_{\{1, \dots, \lambda_1\}}]_+ \cdots [\symm_{\{n-\lambda_k+1, \dots, n\}}]_+$ and
$[\symm_{\lambda}]_- = [\symm_{\{1, \dots, \lambda_1\}}]_- \cdots [\symm_{\{n-\lambda_k+1, \dots, n\}}]_-$.
For example, we have that 
\begin{equation*}
[\symm_{(3,2,2)}]_+ =  [\symm_{\{1,2,3\}}]_+ [\symm_{\{4,5\}}]_+ [\symm_{\{6,7\}}]_+ = (1 + s_1 + s_2 + s_1 s_2 + s_2 s_1 + s_1 s_2 s_1)(1 + s_4)(1 + s_6)
\end{equation*}
 within $\QQ \symm_7$.  Moreover, for any transposition $t = (i, j)$ such that $t \in \symm_{\lambda}$, we have that 
 $[\symm_{\lambda}]_+ = a_+ (1 + t) $ and $[\symm_{\lambda}]_- = a_- (1-t) $ for some group algebra elements
 $a_{\pm} \in \QQ \symm_n$.

The irreducible representations of  $\QQ \symm_n$ are naturally indexed by partitions $\lambda \vdash n$.
We let $S^{\lambda}$ denote the irreducible representation of $\QQ \symm_n$ corresponding to a partition $\lambda$. 
For example, we have that $S^{(n)} = \mathrm{triv}_{\symm_n}$ is the trivial representation and $S^{(1^n)} = \mathrm{sign}_{\symm_n}$ is the sign representation.
The following elementary lemma will be used to determine the isomorphism types of $\symm_n$-modules by checking 
whether they are annihilated by the actions of the group
algebra elements $[\symm_{\lambda}]_+, [\symm_{\lambda}]_-$ for various partitions $\lambda \vdash n$.
While this lemma is standard, we include its proof for completeness.

\begin{lemma}
\label{pincer-lemma}
Let $\lambda, \mu \vdash n$.  We have that $[\symm_{\lambda}]_+$ kills $S^{\mu}$ unless $\lambda \preceq \mu$.
Moreover, we have that $[\symm_{\lambda'}]_-$ kills $S^{\mu}$ unless
$\mu \preceq \lambda$. 
\end{lemma}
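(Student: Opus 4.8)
The plan is to prove the two assertions by a single standard argument based on the interaction of Young symmetrizer pieces with dominance order, using the combinatorial characterization of when a tabloid-type sum survives. First I would recall the construction of $S^\mu$ via the Specht module: $S^\mu$ is the span inside the permutation module $M^\mu = \QQ[\symm_n/\symm_\mu]$ of the polytabloids $e_t = \{t\} \cdot [\symm_{C(t)}]_-$, where $C(t)$ is the column-stabilizer of a standard tableau $t$ of shape $\mu$ and $\{t\}$ is the associated tabloid. Equivalently, one can realize $S^\mu \hookrightarrow M^{\mu'}$ or use the fact that $S^\mu$ is the unique irreducible common constituent of $\Ind_{\symm_\mu}^{\symm_n} \mathrm{triv}$ and $\Ind_{\symm_{\mu'}}^{\symm_n}\mathrm{sign}$ that is ``extremal'' in dominance order (this is essentially Young's rule / the Gale–Ryser phenomenon). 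I would set up whichever of these is cleanest to cite; the Specht-module approach via row/column symmetrizers is the most self-contained.

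The key combinatorial input is the following: if $\lambda, \mu \vdash n$ and $\lambda \not\preceq \mu$, then for every function assigning the boxes of $\mu$ to the rows of $\lambda$ (equivalently, for every tabloid of shape $\lambda$ evaluated against a tableau of shape $\mu$), some row of $\lambda$ must contain two entries lying in the same column of $\mu$. This is the classical lemma (e.g. James, \emph{The Representation Theory of the Symmetric Groups}, Lemma 2.3.4 / Corollary) that underlies Young's rule. Granting this, the first assertion follows: write $[\symm_\lambda]_+ = a_+(1+t)$ for a suitable transposition $t$ inside the relevant row of $\symm_\lambda$ as in the displayed fact from the excerpt; since $t$ can be taken to swap two indices that land in the same column of $\mu$, the operator $1+t$ annihilates the corresponding polytabloid (a column-antisymmetric vector is killed by symmetrizing a transposed pair within one column), and running over a spanning set of $S^\mu$ shows $[\symm_\lambda]_+$ kills $S^\mu$ whenever $\lambda \not\preceq \mu$.

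For the second assertion I would apply the first to the conjugate partitions together with the sign-twist duality $S^{\mu} \cong S^{\mu'} \otimes \mathrm{sign}$. Concretely, tensoring with $\mathrm{sign}$ interchanges the roles of $[\symm_\nu]_+$ and $[\symm_\nu]_-$ (since $\mathrm{sign}(w) w$ becomes $w$ after the twist), and conjugation reverses dominance order: $\alpha \preceq \beta \iff \beta' \preceq \alpha'$. So ``$[\symm_{\lambda'}]_-$ kills $S^\mu$ unless $\mu \preceq \lambda$'' is exactly ``$[\symm_{\lambda'}]_+$ kills $S^{\mu'}$ unless $\lambda' \preceq \mu'$,'' which is an instance of the first assertion with $(\lambda,\mu)$ replaced by $(\lambda', \mu')$. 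I expect the main obstacle — really the only nonroutine point — to be pinning down and citing (or proving) the combinatorial pigeonhole lemma about rows of $\lambda$ hitting a column of $\mu$ twice when $\lambda \not\preceq \mu$, and making the bookkeeping between "row of $\symm_\lambda$ contains a transposition $t$" and "the polytabloid $e_t$ is antisymmetric under $t$'' precise; once that is in hand, the factorization $[\symm_\lambda]_\pm = a_\pm(1\pm t)$ quoted in the excerpt does the rest immediately.
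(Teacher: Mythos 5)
Your proof is correct, but it runs along a different (equally standard) track than the one in the paper. The paper proves the \emph{second} assertion directly and declares the first ``similar'': it embeds $S^{\mu}$ into the permutation module $M^{\mu}=\Ind_{\symm_{\mu}}^{\symm_n}(\mathrm{triv}_{\symm_{\mu}})$, invokes unitriangularity of the Kostka numbers to reduce to killing all of $M^{\mu}$, and then applies the factorization $[\symm_{\lambda'}]_- = a(1-(i,j))$ to a transposition $(i,j)\in\symm_{\lambda'}$ whose two entries sit in the same \emph{row} of a row tabloid $T$ (so $(1-(i,j)).T=0$); the pigeonhole step producing $(i,j)$ from $\mu\not\preceq\lambda$ is stated without elaboration. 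You instead prove the \emph{first} assertion directly on the Specht-module side, using polytabloids and the column version of the dominance lemma (if $\lambda\not\preceq\mu$, some block of $\symm_{\lambda}$ meets a column of the $\mu$-tableau twice, so $(1+(i,j))e_t=0$), and then deduce the second assertion from $S^{\mu}\cong S^{\mu'}\otimes\mathrm{sign}$ together with the order-reversal of conjugation. The underlying mechanism --- fail-of-dominance pigeonhole plus the factorization $[\symm_{\nu}]_{\pm}=a_{\pm}(1\pm t)$ --- is the same in both, but your route avoids the Kostka-number input at the cost of needing the polytabloid spanning set and the dominance lemma you rightly flag as the one thing to cite carefully (it is James's/Sagan's standard lemma, and your statement of it is the correct contrapositive), while the paper's route avoids any mention of columns or Specht combinatorics at the cost of passing through $M^{\mu}$. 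Your duality step handling the two signs at once is also a small economy the paper does not take, since it proves one sign and waves at the other.
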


\begin{proof}
We prove only the second statement.  The proof of the first statement is similar and omitted.

For any $\mu \vdash n$, let $M^{\mu} := \mathrm{Ind}_{\symm_{\mu}}^{\symm_n}(\mathrm{triv}_{\symm_{\mu}}) \cong \symm_n / \symm_{\mu}$
be the trivial representation induced from $\symm_{\mu}$ to $\symm_n$.  
It is well known that $M^{\mu}$ decomposes into irreducibles as
$M^{\mu} = \bigoplus_{\nu \vdash n} (S^{\nu})^{\oplus K_{\nu, \mu}}$, where
$K_{\nu, \mu}$ is the {\it Kostka number} counting semistandard Young tableaux of shape $\nu$ and content $\mu$.
The Kostka numbers are unitriangular in the sense that 
$K_{\mu, \mu} = 1$ and $K_{\nu, \mu} = 0$ unless $\mu \preceq \nu$.

By the unitriangularity of the Kostka numbers, it is enough to show that 
$[\symm_{\lambda'}]_-$ kills $M^{\mu}$ unless $\mu \preceq \lambda$.
Recall that a {\it row tabloid} $T$ of shape $\mu$ is a  bijective filling of the Ferrers diagram of $\mu$
with the entries $1, 2, \dots, n$, considered up to permutation of entries within rows.  
The module $M^{\mu} = \symm_n / \symm_{\mu}$ has basis given by the set of row tabloids of shape $\mu$, where 
$w \in \symm_n$ acts on a row tabloid $T$ by permuting its entries.

Suppose that $\mu \not\preceq \lambda$.  Let $T$ be a row tabloid of shape $\mu$.  We show that $[\symm_{\lambda'}]_-.T = 0$.
Indeed, because $\mu \not\preceq \lambda$, there exists a transposition $(i, j) \in \symm_n$ such that $i$ and $j$ are in the same 
row of $T$ and $(i, j) \in \symm_{\lambda'}$.  There is a group algebra element $a \in \QQ \symm_n$ such that
$[\symm_{\lambda'}]_- = a(1 - (i, j))$.  Since $(i, j).T = T$, we have that
$a(1-(i,j)).T = a.0 = 0$.
\end{proof}

\section{Almost noncrossing partitions and skein relations}
\label{Almost}

In this section we  describe the fundamental skein moves which allow us to resolve any set partition as a linear
combination of noncrossing partitions.  At the heart of this construction are four rules which define noncrossing resolutions of
``almost, but not quite" noncrossing partitions.

Let $\pi$ be a set partition of $[n]$.  We say that $\pi$ is {\it almost noncrossing} if $\pi$ is not noncrossing, but 
there exists $1 \leq i \leq n-1$ such that the set partition $s_i(\pi)$ is noncrossing.
For example, the set partition $\pi_1 = \{ \{1, 5\}, \{2\}, \{3, 4, 6\} \}$ is almost noncrossing because 
$\pi_1$ is not noncrossing, but $s_5(\pi_1)$ is noncrossing.
The set partition $\pi_2 = \{ \{1, 2, 4\}, \{3\}, \{5, 6\} \}$ is noncrossing, and therefore not almost noncrossing.
The set partition $\pi_3 = \{ \{1, 2, 4, 5\}, \{3, 6\} \}$ is neither noncrossing nor almost noncrossing.

\begin{center}
\includegraphics[scale = 0.5]{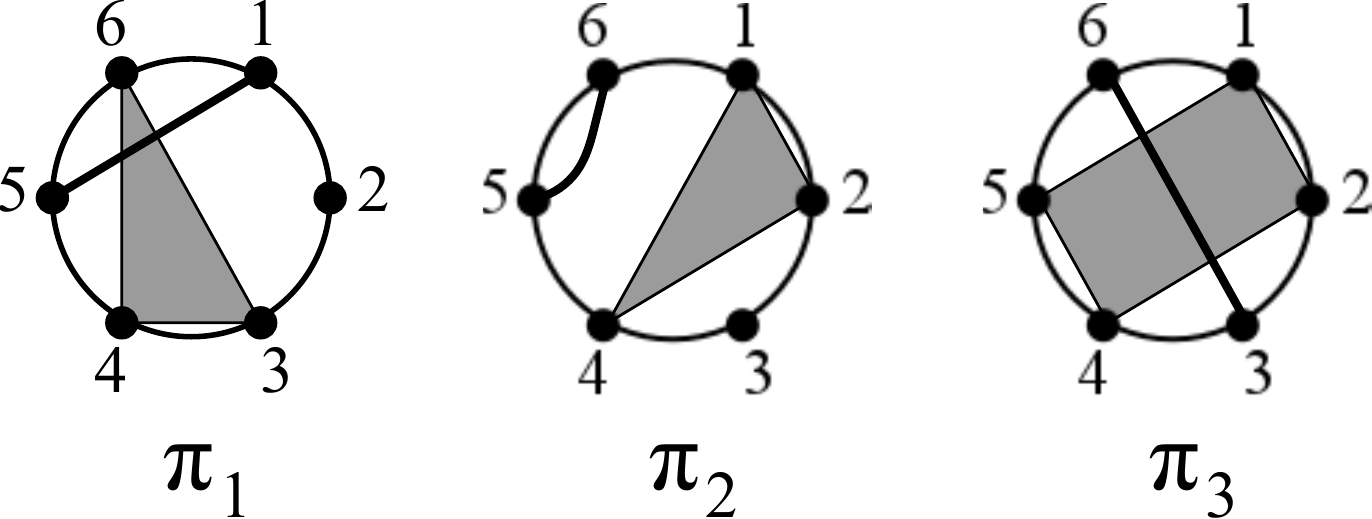}
\end{center}

For $n, k, s \geq 0$, we define sets of almost noncrossing partitions $ANC(n), ANC(n, k),$ and $ANC(n, k, s)$ by
\begin{align}
ANC(n) &:= \{\text{all almost noncrossing partitions of $[n]$} \}, \\
ANC(n, k) &:= \{ \text{all almost noncrossing partitions of $[n]$ with $k$ blocks} \}, \\
ANC(n, k, s) &:= \{ \text{all almost noncrossing partitions of $[n]$ with $k$ blocks and $s$ singletons} \}.
\end{align}
The author is unaware of enumerative formulas for any of the above sets. 

Given $\pi \in ANC(n)$ and $1 \leq i \leq n-1$, we say that $\pi$ {\it crosses at $i$} if $s_i(\pi)$ is noncrossing.  
For example, if $\pi = \{ \{1, 2, 5\}, \{3\}, \{4, 6\} \}$, then
$\pi \in ANC(6,3,1)$ and $\pi$ crosses at both $4$ and $5$. 
If $\pi_1$ is the almost noncrossing partition of $[6]$ shown above, we have that $\pi_1 \in ANC(6, 3, 1)$ and
$\pi_1$ crosses at the single index $5$.
 Also, the partition
$\{ \{1, 3\}, \{2, 4\} \} \in ANC(4,2,0)$ crosses at $1, 2,$ and $3$.  
These examples describe  the entire crossing behavior of almost noncrossing partitions.

\begin{observation}
\label{almost-noncrossing-observation}
Let $\pi \in ANC(n)$.
Then $\pi$ crosses at either one, two, or three indices $i$ for $1 \leq i \leq n-1$. 
If $\pi$ crosses at $i$, then $\val_{\pi}(i), \val_{\pi}(i+1) \in \{1, 2\}$. 
If $\pi$ crosses at two indices $i < j$ for $j \neq i+1$, then $\{i, j\}$ and $\{i+1, j+1\}$ are blocks of $\pi$.
If $\pi$ crosses at $i$ and $i+1$, then $\{i, i+2\}$ is a block of $\pi$.
If $\pi$ crosses at three indices,
then they are of the form $i, i+1,$ and $i+2$, and both $\{i, i+2\}$ and $\{i+1, i+3\}$ are blocks
of $\pi$.
\end{observation}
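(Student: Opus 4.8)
The plan is to analyze an almost noncrossing partition $\pi$ directly in terms of a single ``crossing pair'' of arcs and to track how applying an adjacent transposition can undo the crossing. Since $\pi \in ANC(n)$, by definition $\pi$ is not noncrossing but $s_i(\pi)$ is noncrossing for at least one $i$. First I would fix such an $i$ and examine what must happen locally: using the four-index characterization of crossings (indices $a<b<c<d$ with $a\sim c$, $b\sim d$ but not all four equivalent), I would argue that since $s_i(\pi)$ eliminates \emph{all} crossings, every crossing of $\pi$ must be ``resolved'' by the single swap of $i$ and $i+1$. This forces $i$ and $i+1$ to play distinguished roles in every crossing quadruple of $\pi$; in particular $i$ must be incident to an arc participating in a crossing, so $\val_\pi(i), \val_\pi(i+1) \geq 1$, and since a vertex has at most two incident edges, $\val_\pi(i), \val_\pi(i+1) \in \{1,2\}$. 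This establishes the second sentence.

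Next I would pin down how many indices $i$ can be crossing indices. Suppose $\pi$ crosses at $i$ and also at $j$ with $j > i+1$ (the non-adjacent case). Then both $s_i(\pi)$ and $s_j(\pi)$ are noncrossing. I would argue that the crossing(s) of $\pi$ must be simultaneously fixable by swapping $\{i,i+1\}$ and by swapping $\{j,j+1\}$; since these swaps involve disjoint pairs, the only way a fixed crossing quadruple can be repaired by either one is if the quadruple is exactly $\{i, i+1, j, j+1\}$ arranged so that the two crossing arcs are $\{i,j\}$ and $\{i+1,j+1\}$ (swapping either pair turns these into the nested/parallel pair $\{i,j+1\},\{i+1,j\}$ or $\{i+1,j\},\{i,j+1\}$). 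I also need that these are \emph{full} blocks, i.e.\ $\val = 1$ at each of $i,i+1,j,j+1$: if, say, $i$ had a second neighbor $k$, the arc $\{i,k\}$ together with something would create a crossing not repaired by swapping $j,j+1$, contradiction; one checks the few cases. This gives the ``two non-adjacent indices'' clause. For the adjacent case (crosses at $i$ and $i+1$), a similar but shorter argument: the crossing must be fixed by swapping $\{i,i+1\}$ and by swapping $\{i+1,i+2\}$, and chasing the quadruple forces $\{i, i+2\}$ to be a block (it is the arc ``straddling'' $i+1$). The three-index case $i, i+1, i+2$ then follows by combining: crossing at $i$ and at $i+1$ forces $\{i,i+2\}$ a block, crossing at $i+1$ and $i+2$ forces $\{i+1,i+3\}$ a block, and one checks no other configuration of three crossing indices is possible (e.g.\ $i, i+1, j$ with $j>i+2$ is ruled out by combining the previous two clauses). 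Finally, the bound ``one, two, or three'' follows because four or more crossing indices would, via these structural constraints, force incompatible block structures.

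The main obstacle I expect is the bookkeeping in ruling out ``extra'' incidences and stray crossings: once I know $i$ is a crossing index I know the two arcs of \emph{one} crossing, but $\pi$ may a priori have several crossings, and I must be careful that repairing all of them with a single transposition really does pin the local picture down to the short list in the statement. The clean way to manage this is to first prove a sharp lemma: \emph{if $s_i(\pi)$ is noncrossing and $\pi$ is not, then $\pi$ has exactly one crossing, involving two arcs both incident to $\{i, i+1\}$, and apart from these two arcs $\pi$ is noncrossing.} With that in hand, each clause of the observation becomes a finite case check on the relative positions of the two crossing arcs and the valences of their endpoints, which is routine. I would present that lemma first and then derive each sentence of the observation as a corollary, keeping the case analysis compartmentalized.
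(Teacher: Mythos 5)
The paper gives no proof of this statement at all: it is labeled an Observation and is justified only by the two preceding examples, so your argument has to stand on its own. In outline it does, and your instinct to isolate a sharp lemma first is the right one; the one substantive correction is that the lemma should be stated at the level of blocks, not arcs. The correct statement is: if $\pi$ is not noncrossing but $s_i(\pi)$ is, then the \emph{unique interleaving pair of blocks} of $\pi$ is $(B_i(\pi),B_{i+1}(\pi))$. ``Exactly one crossing, involving two arcs'' is literally false when these blocks are large: $\pi_1=\{\{1,5\},\{2\},\{3,4,6\}\}$ crosses only at $5$, yet the arc $\{1,5\}$ crosses both of the arcs $\{3,6\}$ and $\{4,6\}$. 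The block-level lemma is quick: if blocks $C\neq D$ interleave and $\{C,D\}\neq\{B_i(\pi),B_{i+1}(\pi)\}$, then at most one of $C,D$ meets $\{i,i+1\}$, so a witnessing quadruple survives the swap of $i$ and $i+1$ (replace $i$ by $i+1$ or vice versa in the quadruple if needed, using that no index lies strictly between them), and $s_i(\pi)$ would still be crossing.

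With that lemma your case analyses do close, but three points deserve explicit care. First, for the valence claim the clean argument is that if $\{i\}$ (or $\{i+1\}$) were a singleton block, the swap would leave the interleaving pattern of every pair of blocks unchanged, so $s_i(\pi)$ would still cross; this is sharper than asserting that $i$ must be incident to a crossing arc. Second, in the two-index case the lemma applied at $i$ and at $j$ gives $\{B_i(\pi),B_{i+1}(\pi)\}=\{B_j(\pi),B_{j+1}(\pi)\}$ as unordered pairs, and you must first eliminate the identification $B_i(\pi)=B_{j+1}(\pi)$, $B_{i+1}(\pi)=B_j(\pi)$: it dies immediately because after swapping $i$ and $i+1$ the indices $i<i+1<j<j+1$ alternate between the two (distinct) image blocks, so $s_i(\pi)$ crosses. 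Only then does the exclusion of extra elements of $\{i,j\}$ and $\{i+1,j+1\}$ proceed as you sketch, with some contradictions supplied by $s_i(\pi)$ and others by $s_j(\pi)$. Third, in the adjacent case only $\{i,i+2\}$ is forced to be a doubleton; the block of $i+1$ may be large (e.g.\ $\{\{1,3,5\},\{2,4\}\}$ crosses exactly at $2$ and $3$), so chase only the block containing $i$ and $i+2$ there. The count of one, two, or three crossing indices then follows as you say: non-adjacent crossing indices force doubleton blocks whose collisions rule out any non-consecutive triple and any set of four or more.
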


Let $\pi \in ANC(n)$ be an almost noncrossing partition which crosses at $i$.
We define a linear combination $\sigma(\pi) \in V(n)$ of noncrossing partitions as follows.  The `skein map'
$\sigma: ANC(n) \rightarrow V(n)$ comes in four flavors $\sigma_1 - \sigma_4$ 
depending on the local structure of $\pi$ near the vertex $i$ as shown in Figure~\ref{SkeinRelationsFigure}.  More
formally, we define $\sigma(\pi)$ as follows.

\begin{figure}
\centering
\includegraphics[scale = 0.6]{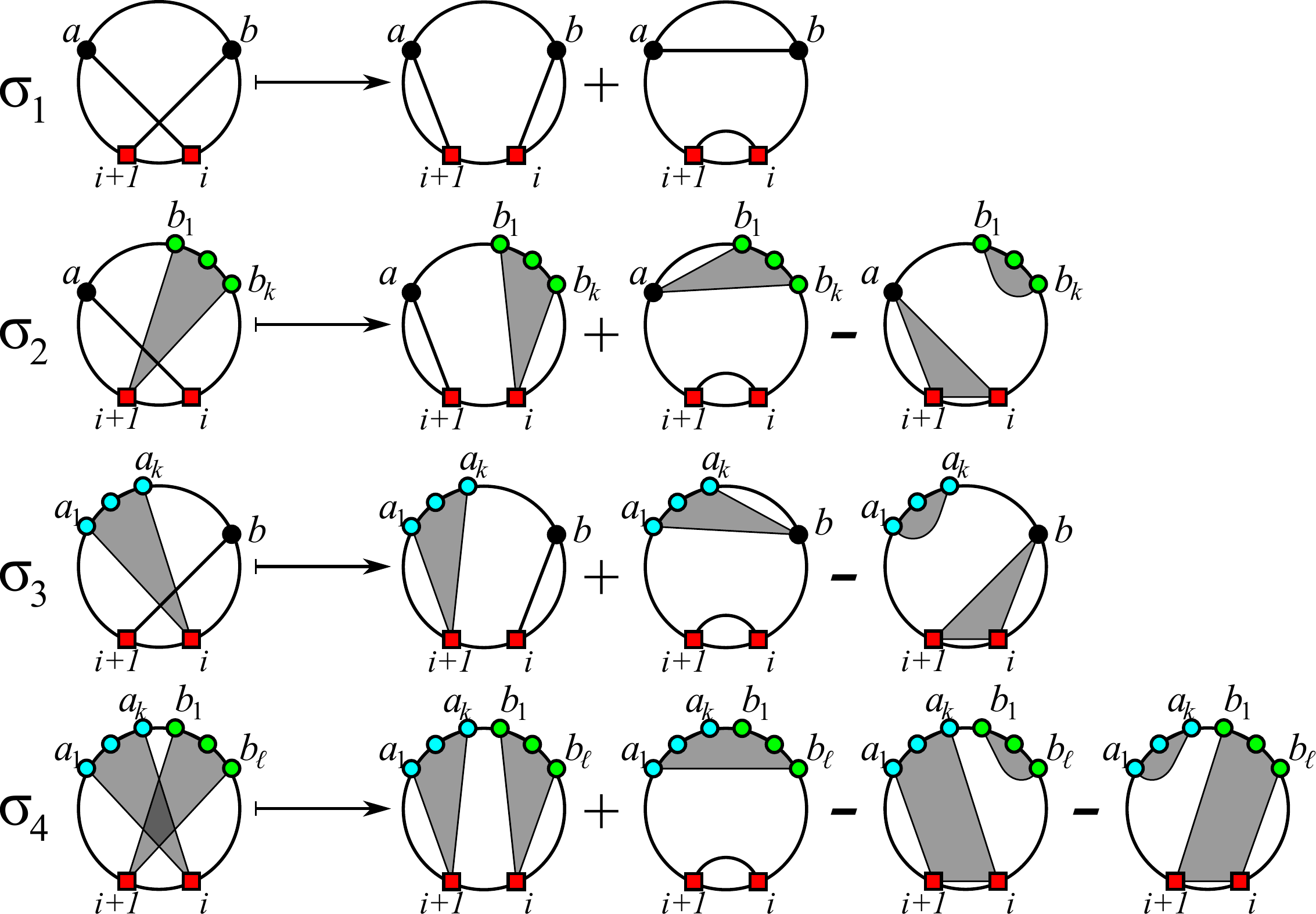}
\caption{The skein relations $\sigma_1$-$\sigma_4$ on almost noncrossing partitions.}
\label{SkeinRelationsFigure}
\end{figure}

\begin{defn}
\label{skein-definition}
Let $\pi \in ANC(n)$ and suppose that $\pi$ crosses at $i$.  We define $\sigma(\pi) \in V(n)$ as follows.
Write $B_i(\pi) = \{i, a_1, \dots, a_k\}$ and $B_{i+1}(\pi) = \{i+1, b_1, \dots, b_{\ell}\}$, so that $k, \ell \geq 1$.
If $k, \ell = 2$, set $\sigma(\pi) = \pi_1 + \pi_2 - \pi_3 - \pi_4$, where $\pi_1 = s_i(\pi)$ 
and $\pi_2, \pi_3,$ and $\pi_4$ are obtained
from $\pi$ by replacing $B_i(\pi)$ and $B_{i+1}(\pi)$ with the blocks$\dots$
\begin{itemize}
\item $\{i, i+1\}$ and $\{a_1, \dots, a_k, b_1, \dots, b_{\ell}\}$ for $\pi_2$,
\item $\{i, i+1, a_1, \dots, a_k\}$ and $\{b_1, \dots, b_{\ell}\}$ for $\pi_3$, and
\item $\{i, i+1, b_1, \dots, b_{\ell}\}$ and $\{a_1, \dots, a_k\}$ for $\pi_4$.
\end{itemize}
If $\ell = 1$ and $k = 2$, then $\sigma(\pi)$ is obtained from the above expression by deleting the term containing
$\pi_3$.  If $k = 1$ and $\ell = 2$, then $\sigma(\pi)$ is obtained from the above expression by deleting the term
containing $\pi_4$.  If $\ell, k = 1$, then $\sigma(\pi)$ is obtained from the above expression by deleting the
terms containing $\pi_3$ and $\pi_4$.
\end{defn}

The two-term skein relation $\sigma_1$ has appeared in numerous contexts.  In combinatorics, it 
represents the famous {\it Ptolemy relation}
which describes the coordinate ring of the Grassmannian 
$\mathrm{Gr}(n, 2)$ of $2$-dimensional subspaces of $\CC^n$, or equivalently gives the exchange relation for type A cluster algebras
(indeed, this is among the primary motivational examples of cluster algebras).
In topology, it is  the classical limit of the {\it Kaufmann bracket} which can be used to define a polynomial knot invariant.
In algebra, it describes an embedding
$\mathrm{TL}_n(-2) \hookrightarrow \QQ \symm_n$ of the Temperley-Lieb algebra at parameter $-2$ into the symmetric group algebra,
where the  generator $t_i$ of $\mathrm{TL}_n(-2)$ is sent to $s_i - 1 \in \QQ[\symm_n]$ for $1 \leq i \leq n-1$.

By contrast, the author is unaware of previous appearances of the three-term skein relations $\sigma_2, \sigma_3$ and 
the four-term skein relation $\sigma_4$.  These skein relations appear to be new.  In this paper, we will use them to 
resolve set partitions into linear combinations of noncrossing partitions,
build representations of the symmetric group, and solve problems related to cyclic sieving.  It could be interesting to 
extend the rich combinatorial, topological, and algebraic manifestations of $\sigma_1$ to the  relations 
$\sigma_2, \sigma_3,$ and $\sigma_4$.

The skein relations $\sigma_1, \sigma_2,$ and $\sigma_3$ can be viewed as ``degenerations" of the skein relation 
$\sigma_4$.  Namely, one applies $\sigma_4$ and removes any terms corresponding to partitions where the number of 
singleton blocks increases.  We will see in the closing remarks of Section~\ref{Symmetric} that 
our action of $\symm_n$ on $V(n)$ could be constructed using the action of $\sigma_4$ alone, so that 
$\sigma_4$ is essentially the only skein relation in this paper.  We consider the degenerate relations
$\sigma_1, \sigma_2,$ and $\sigma_3$ to avoid some annoying  module filtrations
and due to the illustrious history of the relation $\sigma_1$. 

Since an almost noncrossing partition could cross at more than one index, Definition~\ref{skein-definition}
is {\it a priori} ambiguous.  For example, let $\pi = \{ \{1, 2, 5\}, \{3\}, \{4, 6\} \}$, so that $\pi$ crosses at $4$ and $5$.  If we use the crossing
at $4$ to apply Definition~\ref{skein-definition}, we use the flavor $\sigma_2$ of Figure~\ref{SkeinRelationsFigure}
to get that
\begin{equation*}
\sigma(\pi) = \{ \{1, 2, 6\}, \{3\}, \{4, 5\} \} + \{ \{1, 2, 4\}, \{3\}, \{5, 6\} \} - \{ \{1, 2\}, \{3\}, \{4, 5, 6\} \}.
\end{equation*}
On the other hand, if we use the crossing at $5$ to apply Definition~\ref{skein-definition}, we use the flavor
$\sigma_3$ of Figure~\ref{SkeinRelationsFigure} to get the same result for $\sigma(\pi)$.  This is a general phenomenon.

\begin{lemma}
\label{sigma-restricts}
Definition~\ref{skein-definition} gives a well defined function $\sigma: ANC(n) \rightarrow V(n)$.  Moreover, 
if $\pi \in ANC(n)$ every set partition appearing
in $\sigma(\pi)$ has the same number of blocks and the same number of singleton blocks as $\pi$.  
\end{lemma}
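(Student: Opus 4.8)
The plan is to establish the two assertions in turn: well-definedness of $\sigma$, and the invariance of block-count and singleton-count under $\sigma$. The second is by far the easier one, so I would dispose of it first. Fix $\pi \in ANC(n)$ crossing at $i$, and write $B_i(\pi) = \{i, a_1, \dots, a_k\}$, $B_{i+1}(\pi) = \{i+1, b_1, \dots, b_\ell\}$ as in Definition~\ref{skein-definition}. In every flavor $\sigma_1$--$\sigma_4$, each term $\pi_j$ is obtained from $\pi$ by redistributing the elements of the two blocks $B_i(\pi)$ and $B_{i+1}(\pi)$ into two new blocks, leaving all other blocks untouched; so the total number of blocks is preserved term by term. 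For the singleton count, I would observe that by Observation~\ref{almost-noncrossing-observation} we have $\val_i(\pi), \val_{i+1}(\pi) \in \{1,2\}$, i.e. neither $\{i\}$ nor $\{i+1\}$ is a singleton of $\pi$, and no other block changes; one then checks that the terms deleted in passing from the generic $\sigma_4$ to the degenerate relations $\sigma_1, \sigma_2, \sigma_3$ are precisely those in which $\{a_1,\dots,a_k\}$ or $\{b_1,\dots,b_\ell\}$ becomes a singleton, i.e. those in which $k=1$ or $\ell=1$. Hence in the terms that actually survive, the two new blocks both have size $\geq 2$, so no singletons are created or destroyed. (The ``obvious'' but necessary final remark is that $\sigma(\pi)$ genuinely lies in $V(n)$, i.e. every $\pi_j$ appearing is noncrossing; this is part of the content of Figure~\ref{SkeinRelationsFigure} and should be verified in each flavor, but it is routine planar combinatorics given that $s_i(\pi)$ is noncrossing and $B_i(\pi), B_{i+1}(\pi)$ are the only blocks near $i$ that can be involved in a crossing.)

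The real work is well-definedness: by Observation~\ref{almost-noncrossing-observation}, $\pi$ can cross at one, two, or three indices, and Definition~\ref{skein-definition} makes a choice of crossing index $i$, so I must show the resulting element of $V(n)$ is independent of that choice. The plan is to enumerate the multi-crossing cases dictated by Observation~\ref{almost-noncrossing-observation} and check each by hand. There are essentially three situations: (a) $\pi$ crosses at two non-adjacent indices $i<j$ with $j\neq i+1$, in which case $\{i,j\}$ and $\{i+1,j+1\}$ are blocks of $\pi$; (b) $\pi$ crosses at two adjacent indices $i, i+1$, in which case $\{i,i+2\}$ is a block; and (c) $\pi$ crosses at three consecutive indices $i, i+1, i+2$, with $\{i,i+2\}$ and $\{i+1,i+3\}$ both blocks, which reduces to combining (a) and (b). In case (a), at the crossing $i$ we have $B_i(\pi)=\{i,j\}$ and $B_{i+1}(\pi)=\{i+1,j+1\}$, so $k=\ell=1$ and we are in flavor $\sigma_1$: $\sigma(\pi) = s_i(\pi) + (\text{the }\{i,i+1\}/\{j,j+1\}\text{ term})$. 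Doing the same computation at the crossing $j$ gives the very same two noncrossing partitions (swapping $j,j+1$ produces $\{i,j+1\},\{i+1,j\}\to$ noncrossing, and the parallel term reconnects $\{i,i+1\},\{j,j+1\}$), so the two expressions literally coincide. Case (b) is the worked example already given in the text (the $\sigma_2$ vs. $\sigma_3$ computation for $\{\{1,2,5\},\{3\},\{4,6\}\}$): at crossing $i$ one has $k=2,\ell=1$ and uses the $\sigma_3$-type relation (delete the $\pi_3$ term), at crossing $i+1$ one has $k=1,\ell=2$ and uses $\sigma_2$ (delete $\pi_4$); expanding both, the surviving three terms match after relabeling, because the block $\{i,i+2\}$ plays symmetric roles. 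Case (c) I would handle by noting that the three crossings come in an $i,i+1,i+2$ string with $B_i=\{i,i+2\}$, $B_{i+1}=\{i+1,i+3\}$, so each individual crossing falls under case (a) or (b) relative to an adjacent one, and transitivity of equality finishes it.

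I expect case (b)/(c) — the genuinely multi-term, adjacent-crossing comparisons — to be the main obstacle, since there the two flavors invoked ($\sigma_2$ and $\sigma_3$, or the relevant truncations of $\sigma_4$) look different on their face and one must carefully track which blocks get merged or split, verify that the deleted terms on the two sides correspond, and check planarity of each surviving term. The case analysis is finite and each sub-case is a short direct computation, but it is the part requiring genuine care rather than the formalities around block counts. A clean way to organize it, which I would adopt, is to prove well-definedness \emph{only} for the universal four-term relation $\sigma_4$ first — there the comparison at two different crossings is a single symmetric identity among the four redistributions of $B_i \cup B_{i+1}$ — and then deduce the degenerate cases $\sigma_1, \sigma_2, \sigma_3$ by observing (as in Lemma~\ref{sigma-restricts}'s own second assertion) that the deleted terms are exactly the singleton-creating ones, which are deleted symmetrically at every crossing index. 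This reduces the whole lemma to one computation plus a bookkeeping remark.
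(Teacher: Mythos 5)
Your second paragraph, the direct case analysis driven by Observation~\ref{almost-noncrossing-observation}, is essentially the paper's proof and is sound, as is your argument for preservation of block and singleton counts. The problem is the ``clean organization'' you say you would actually adopt: proving crossing-independence for the untruncated four-term relation $\sigma_4$ first and then truncating. That reduction fails, and the paper's own worked example shows it. Take $\pi = \{\{1,2,5\},\{3\},\{4,6\}\}$, which crosses at $4$ and at $5$. Computing the full four-term expression at the crossing $4$ (blocks $\{4,6\}$ and $\{1,2,5\}$) yields the negative terms $\{\{1,2\},\{3\},\{4,5,6\}\}$ and $\{\{1,2,4,5\},\{3\},\{6\}\}$, while computing it at the crossing $5$ yields the negative terms $\{\{1,2\},\{3\},\{4,5,6\}\}$ and $\{\{1,2,5,6\},\{3\},\{4\}\}$. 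The singleton-creating terms are \emph{different partitions} at the two crossings, so the untruncated expressions do not agree; there is no ``single symmetric identity among the four redistributions.'' The same thing happens in the non-adjacent two-crossing case: with blocks $\{i,j\}$ and $\{i+1,j+1\}$, the merge terms at the crossing $i$ carry the singletons $\{j\},\{j+1\}$, while at the crossing $j$ they carry $\{i\},\{i+1\}$. In other words, the crossing-dependent terms are precisely the deleted ones, so they are not ``deleted symmetrically at every crossing index''; the truncation is what \emph{creates} the invariance and cannot be deferred. The direct case-by-case verification (which is what the paper does, and what you outline before proposing the shortcut) is the argument that actually works.

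Two minor points on that direct analysis. In your case (b) the roles of $k$ and $\ell$ are swapped: since $\{i,i+2\}$ is a block, at the crossing $i$ the block containing the smaller index is the doubleton, so $k=1$ and $\ell\geq 2$ (flavor $\sigma_2$, delete $\pi_4$), and at the crossing $i+1$ it is the reverse (flavor $\sigma_3$, delete $\pi_3$); this does not affect the substance, since in either case the same three noncrossing partitions survive. Also, adjacent crossings include the subcase where $B_{i+1}$ is itself a doubleton (e.g. blocks $\{i,i+2\}$ and $\{i+1,x\}$), in which both crossings use $\sigma_1$ and trivially give the same two terms; your case (b) as written only covers the situation of the worked example. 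Your treatments of case (a) and of the three-crossing case (via pairwise comparisons and transitivity) agree with the paper.
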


\begin{proof}
Let $\pi \in ANC(n)$ and suppose that $\pi$ crosses at $i$.  One verifies that the terms appearing on the right hand
side of $\sigma(\pi)$ presented in Definition~\ref{skein-definition} (where we use the crossing at $i$ to compute
$\sigma(\pi)$)
are all noncrossing set partitions.  This comes from the fact that the set partition $s_i(\pi)$ is noncrossing and
is best seen using the pictorial version of the relations $\sigma_1 - \sigma_4$.  
This
implies that for any fixed crossing used to compute $\sigma(\pi)$, we have that $\sigma(\pi) \in V(n)$.  Moreover,
direct inspection shows that the total number of blocks and the number of singleton blocks appearing in any term of 
$\sigma(\pi)$ is the same as the corresponding count in $\pi$. 

 We are reduced to showing that $\sigma(\pi)$ is independent
of the choice of crossing used to compute in in Definition~\ref{skein-definition}.
To do this, assume first that $\sigma(\pi)$ crosses at precisely two distinct indices $1 \leq i < j \leq n+1$.

If $j \neq i+1$,
then $\{i, j\}$ and $\{j+1, i+1\}$ are both blocks of 
$\pi$ by Observation~\ref{almost-noncrossing-observation}, 
so that we use the relation $\sigma_1$ to compute $\sigma(\pi)$ whether we use the crossing at $i$ or the crossing at $j$.
In either case, we get the same two-term expression for $\sigma(\pi)$.  

If $j = i+1$, at least one of $\val_{\pi}(i)$ 
and $\val_{\pi}(i+1)$ is equal to $1$ by Observation~\ref{almost-noncrossing-observation}.  
If $\val_{\pi}(i) = \val_{\pi}(i+1) = 1$, then we use the relation $\sigma_1$ to compute
$\sigma(\pi)$ whether we use the crossing at $i$ or the crossing at $i+1$.  In either case, we get the same two-term 
expression for $\sigma(\pi)$.  If $\val_{\pi}(i) = 1$ and $\val_{\pi}(i+1) = 2$, then necessarily $\{i, i+2\}$ is a block of $\pi$.
If we use the crossing at $i$ to compute $\sigma(\pi)$, we use the relation $\sigma_2$.  If we use the crossing at $i+1$ to
compute $\sigma(\pi)$, we use the relation $\sigma_3$.  In either case, we get the same three-term expression.

Finally, assume that $\pi$ crosses at three indices.  By Observation~\ref{almost-noncrossing-observation},
they are of the form $i, i+1, i+2$ for some $i$ and $\{i, i+2\}$ and $\{i+1, i+3\}$ are both blocks of $\pi$.
For any choice of crossing, we use the relation $\sigma_1$ to compute $\sigma(\pi)$, resulting in the same two-term expression.
We conclude that $\sigma(\pi)$ is a well defined linear combination of noncrossing partitions.
\end{proof}

By Lemma~\ref{sigma-restricts}, the map $\sigma$ restricts to maps 
$ANC(n, k) \rightarrow V(n, k)$ and $ANC(n, k, s) \rightarrow V(n, k, s)$ for any integers $n, k,$ and $s$.
By abuse of notation, we use $\sigma$ to refer to these restrictions as well.

\section{Symmetric group action on noncrossing partitions}
\label{Symmetric}

In this section we use the skein map $\sigma$ from Section~\ref{Almost}
to endow the space $V(n)$ spanned by noncrossing set partitions of $[n]$ with an action of $\symm_n$.
We begin by describing the linear operators $\tau_i: V(n) \rightarrow V(n)$ for $1 \leq i \leq n-1$
which will give the actions of adjacent transpositions.

Observe that if $\pi \in NC(n)$ and if $1 \leq i \leq n-1$, then either $s_i(\pi)$ is noncrossing or 
$s_i(\pi)$ is an almost noncrossing partition which crosses at $i$.  Define $\tau_i(\pi) \in V(n)$ by
\begin{equation}
\tau_i(\pi) = \begin{cases}
s_i(\pi) & \text{if $s_i(\pi)$ is noncrossing and $B_i(\pi) \neq B_{i+1}(\pi)$}, \\
-\pi & \text{if $B_i(\pi) = B_{i+1}(\pi)$}, \\
\sigma(s_i(\pi)) & \text{if $s_i(\pi)$ is almost noncrossing.}
\end{cases}
\end{equation}
In other words, the operator $\tau_i$ attempts to swap $i$ and $i+1$ in a noncrossing partition $\pi$.
If this results in a crossing partition, then one applies the skein map $\sigma$ to get a linear combination of noncrossing partitions.
If $i$ and $i+1$ are blockmates in $\pi$, then $\tau_i$ negates $\pi$.

Extend $\tau_i$ linearly to a map $\tau_i: V(n) \rightarrow V(n)$ for $1 \leq i \leq n-1$.   
Lemma~\ref{sigma-restricts} implies that $\tau_i$ is a well defined linear endomorphism of $V(n)$ which
stabilizes $V(n, k)$ and $V(n, k, s)$ for any integers $k$ and $s$.  The sets $NC(n), NC(n, k),$ and $NC(n, k, s)$ are 
natural bases of the spaces $V(n), V(n, k),$ and $V(n, k, s)$; we call these the {\it skein bases}.

The goal of this section is to show
that the collection of maps $\{\tau_i \,:\, 1 \leq i \leq n-1\}$ satisfy the Coxeter relations:
\begin{equation}
\begin{cases}
\tau_i^2 = 1 & \text{for all $1 \leq i \leq n-1$,} \\
\tau_i \tau_j = \tau_j \tau_i & \text{for all $|i - j| > 1$,} \\
\tau_i \tau_{i+1} \tau_i = \tau_{i+1} \tau_i \tau_{i+1} & \text{for all $1 \leq i \leq n-2$.}
\end{cases}
\end{equation}
This will imply that the assignment $s_i \mapsto \tau_i$ defines a representation
$\symm_n \rightarrow GL(V(n))$.  

Our proof that the operators $\{\tau_i \,:\, 1 \leq i \leq n-1\}$ satisfy the Coxeter relations
is a somewhat involved verification that the images of any skein basis element
$\pi \in NC(n)$ under the left hand and right hand sides of these relations coincide.  
Fortunately,
the combinatorial definition of the skein map $\sigma$ allows us to take a direct and elementary  approach.
Unfortunately,
several of the resulting elements of $V(n)$ will be 16-term expressions.
It would be
interesting to get a shorter and more conceptual proof that the Coxeter relations are satisfied.

We handle the three Coxeter relations in three separate lemmas.  The
relation $\tau_i^2 = 1$ is the easiest to verify.

\begin{figure}
\centering
\includegraphics[scale = 0.6]{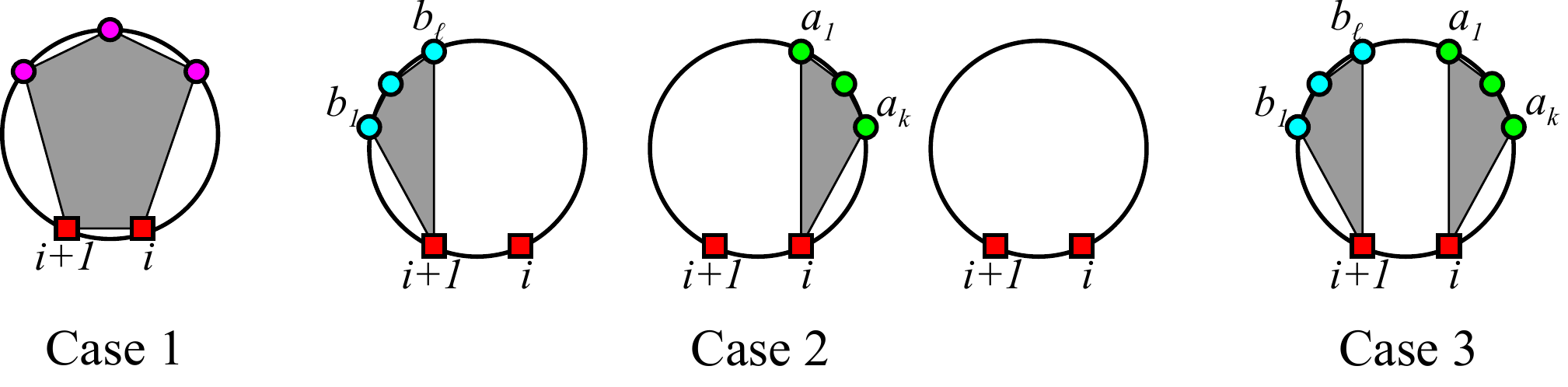}
\caption{The three cases of Lemma~\ref{first-coxeter-relation}.  The numbers of green and blue vertices are both at least one.  
The number of purple vertices could be zero.}
\label{coxeter-one-figure}
\end{figure}

\begin{lemma}
\label{first-coxeter-relation}
For any $1 \leq i \leq n-1$ we have  $\tau_i^2 = 1$.
\end{lemma}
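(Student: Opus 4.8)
The plan is to verify $\tau_i^2(\pi) = \pi$ directly on each skein basis element $\pi \in NC(n)$, organized by the local structure of $\pi$ near the pair $\{i, i+1\}$. There are essentially three cases, matching Figure~\ref{coxeter-one-figure}: (a) $s_i(\pi)$ is noncrossing and $B_i(\pi) \neq B_{i+1}(\pi)$; (b) $B_i(\pi) = B_{i+1}(\pi)$; and (c) $s_i(\pi)$ is almost noncrossing, i.e. crosses at $i$.

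In case (b), the definition gives $\tau_i(\pi) = -\pi$ immediately, and since $B_i(\pi) = B_{i+1}(\pi)$ still holds for $-\pi$, applying $\tau_i$ again yields $\tau_i^2(\pi) = -(-\pi) = \pi$. In case (a), $\tau_i(\pi) = s_i(\pi)$, and I must check that $\tau_i(s_i(\pi)) = \pi$. Here I would argue that $s_i(s_i(\pi)) = \pi$ is noncrossing and that the blockmate status of $i, i+1$ is unchanged under applying $s_i$ (swapping $i$ and $i+1$ does not create or destroy the relation $i \sim i+1$), so we land back in case (a) with $s_i(\pi)$ in place of $\pi$, giving $\tau_i(s_i(\pi)) = s_i(s_i(\pi)) = \pi$. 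One should note the subtlety that $s_i(\pi)$ could conceivably be almost noncrossing rather than noncrossing, but the hypothesis of case (a) is exactly that it is noncrossing, so this does not arise.

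Case (c) is the genuine content and the main obstacle. Here $s_i(\pi)$ is almost noncrossing and crosses at $i$, and $\tau_i(\pi) = \sigma(s_i(\pi))$, a sum of $2$, $3$, or $4$ noncrossing partitions according to which of $\sigma_1$–$\sigma_4$ applies (determined by the sizes $k = |B_i(s_i(\pi))| - 1$ and $\ell = |B_{i+1}(s_i(\pi))| - 1$, equivalently the valences at $i$ and $i+1$). I would treat the four flavors in turn. For each term $\pi_j$ appearing in $\sigma(s_i(\pi))$, I must compute $\tau_i(\pi_j)$ and check that the total telescopes back to $\pi$. The key observations making this tractable: the leading term $\pi_1 = s_i(s_i(\pi)) = \pi$ with coefficient $+1$, and applying $\tau_i$ to it — since $i \sim i+1$ fails in $\pi$ (as $s_i(\pi)$ crossed at $i$, forcing $\mathrm{val}$ conditions but not blockmate-hood) and $s_i(\pi)$ is not noncrossing — lands in case (c) again, so $\tau_i(\pi_1) = \sigma(s_i(\pi))$, reproducing the whole expression; meanwhile $\pi_2$ has $i \sim i+1$, so $\tau_i(\pi_2) = -\pi_2$; and $\pi_3, \pi_4$ (when present) have $s_i$ acting either trivially-up-to-noncrossing or again via a skein relation, and I would check these produce exactly the terms needed to cancel everything in $\tau_i(\pi_1)$ except $\pi$ itself. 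I expect each flavor to reduce to a short finite bookkeeping identity among at most $16$ noncrossing partitions; the effort is in confirming, via the pictorial form of $\sigma_1$–$\sigma_4$ and Observation~\ref{almost-noncrossing-observation}, that the arcs match up so the cancellation is exact. No single step is deep, but the four-term case $\sigma_4$ applied to a term that itself resolves via $\sigma_4$ is where the verification is largest and most error-prone.
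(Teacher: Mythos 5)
Your proposal follows the paper's proof exactly: the same three-case split (blockmates; $s_i(\pi)$ noncrossing; $s_i(\pi)$ almost noncrossing) and the same telescoping mechanism in the crossing case, where $\tau_i(\pi_1) = \tau_i(\pi)$ regenerates the full skein expansion and the remaining terms cancel it down to $\pi$. The only point you have not yet pinned down is that every one of $\pi_2, \pi_3, \pi_4$ produced by Definition~\ref{skein-definition} contains $i$ and $i+1$ in a common block, so each is simply negated by $\tau_i$; no second skein resolution ever occurs, and the ``$\sigma_4$ applied to a $\sigma_4$-term'' scenario you flag as the hard part never arises.
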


\begin{proof}
Let $\pi \in NC(n)$.  We show that $\tau_i^2(\pi) = \pi$.  Our analysis breaks up into three cases depending
on the local structure of $\pi$ near the index $i$.  These cases are illustrated in Figure~\ref{coxeter-one-figure}.

{\bf Case 1:}  {\it $B_i(\pi) = B_{i+1}(\pi)$.}

This is shown on the left of Figure~\ref{coxeter-one-figure}.
In this case, we have that $\tau_i^2(\pi) = \tau_i(-\pi) = - \tau_i(\pi) = \pi$.

{\bf Case 2:}  {\it One of the valences $\val_{\pi}(i)$ or $\val_{\pi}(i+1)$ is zero and $B_i(\pi) \neq B_{i+1}(\pi)$.}

The three possibilities for this case are shown in the middle of Figure~\ref{coxeter-one-figure}.
In this case, we have that $s_i.\pi$ is noncrossing.  This means that
 $\tau_i^2(\pi) = \tau_i(s_i(\pi)) = s_i^2(\pi) = \pi$.

{\bf Case 3:}  {\it Both of the valences $\val_{\pi}(i)$ or $\val_{\pi}(i+1)$ are positive and $B_i(\pi) \neq B_{i+1}(\pi)$.}

This case is shown on the right of Figure~\ref{coxeter-one-figure}.
In this case, the set partition $s_i(\pi)$ is almost noncrossing.
Let $B_i(\pi) = \{i, a_1, \dots, a_k\}$ and $B_{i+1}(\pi) = \{i+1, b_1, \dots, b_{\ell}\}$, where $k, \ell \geq 1$.  
If $k, \ell > 1$,
we have that 
$\tau_i(\pi) = \pi_1 + \pi_2 - \pi_3- \pi_4$, where $\pi_1 = \pi$ and $\pi_2, \pi_3, \pi_4$ are obtained
from $\pi$ by replacing $B_i(\pi)$ and $B_{i+1}(\pi)$ with the blocks$\dots$
\begin{itemize}
\item 
$\{i, i+1\}$ and $\{a_1, \dots, a_k, b_1, \dots, b_{\ell} \}$ for $\pi_2$,
\item
$\{i, i+1, a_1, \dots, a_k\}$ and $\{b_1, \dots, b_{\ell} \}$ for $\pi_3$, 
and
\item
$\{i, i+1, b_1, \dots, b_{\ell} \}$ and $\{a_1, \dots, a_k\}$ for $\pi_4$.
\end{itemize}
If $k = 1$, then $\tau_i(\pi)$ is obtained from the above expression by removing the term involving $\pi_4$.
If $\ell = 1$, then $\tau_i(\pi)$ is obtained from the above expression by removing the term involving
$\pi_3$.
We have that
$\tau_i(\pi_2) = -\pi_2, \tau_i(\pi_3) = -\pi_3,$ and $\tau_i(\pi_4) = -\pi_4$.  This implies that
$\tau_i^2(\pi) = \pi$, as desired.  This concludes the proof of Lemma~\ref{first-coxeter-relation}.
\end{proof}

The commuting relation $\tau_i \tau_j = \tau_j \tau_i$ for $|i - j| > 1$ involves more cases in its verification.

\begin{lemma}
\label{second-coxeter-relation}
For any $1 \leq i < j \leq n-1$ with $j - i > 1$ we have $\tau_i \tau_j = \tau_j \tau_i$.
\end{lemma}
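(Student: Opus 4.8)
The plan is to fix a noncrossing partition $\pi \in NC(n)$ and indices $i < j$ with $j - i > 1$, and to verify $\tau_i\tau_j(\pi) = \tau_j\tau_i(\pi)$ by reducing to the already-established case analysis for $\tau_i$ near a single index. The key structural observation is that the operator $\tau_j$ (and the skein map $\sigma$ applied near $j$) only modifies blocks that meet the vertex set $\{j, j+1\}$ together with whatever other vertices lie in the blocks $B_j(\pi)$ and $B_{j+1}(\pi)$; crucially, the \emph{local picture} of $\pi$ near $\{i, i+1\}$ — namely the valences $\val_i, \val_{i+1}$, whether $B_i = B_{i+1}$, and the cyclic order of arc endpoints at $i$ and $i+1$ — is unaffected by $\tau_j$ unless one of the indices $i$ or $i+1$ happens to lie in $B_j(\pi) \cup B_{j+1}(\pi)$. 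I would separate the argument into two regimes: (a) the ``disjoint support'' case, where $\{i,i+1\}$ and the blocks touched by $\tau_j$ are literally disjoint; and (b) the ``overlapping support'' case, where one of $B_i(\pi), B_{i+1}(\pi)$ coincides with or meets one of $B_j(\pi), B_{j+1}(\pi)$.

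In regime (a), the two operations act on disjoint sets of arcs, so they manifestly commute term by term: applying $\tau_j$ first produces a $\ZZ$-linear combination $\sum_m c_m \pi_m$ of noncrossing partitions, each agreeing with $\pi$ near $\{i,i+1\}$ (same valences, same $B_i \ne B_{i+1}$ status, same whether $s_i$ creates a crossing), so $\tau_i$ acts on each $\pi_m$ by the same local rule it applies to $\pi$; and symmetrically in the other order. Here one uses the fact, implicit in Lemma~\ref{sigma-restricts} and its proof, that $\sigma$ (hence $\tau_j$) is a purely local modification, together with the observation that a crossing created by $s_i$ is localized at vertex $i$ and a crossing created by $s_j$ at vertex $j$, and these do not interfere because $|i-j|>1$. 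I would phrase this carefully: if $s_i(\pi)$ crosses at $i$ in a way handled by flavor $\sigma_t$ of Figure~\ref{SkeinRelationsFigure}, then for each $\pi_m$ in the expansion of $\tau_j(\pi)$, $s_i(\pi_m)$ crosses at $i$ by the same flavor $\sigma_t$, because the relevant data ($k = |B_i|-1$, $\ell = |B_{i+1}|-1$, and the arc structure) is untouched.

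In regime (b), the blocks overlap, which can happen in a bounded number of configurations — e.g. $B_i(\pi) = B_j(\pi)$, or $i+1 \in B_j(\pi)$, or $B_{i+1}(\pi)=B_{j+1}(\pi)$, and so on. Since $j-i \geq 2$, the vertices $i, i+1, j, j+1$ are four distinct boundary points (with $i+1 < j$), so the overlaps are constrained by noncrossingness: a block containing both $i$ and $j$, say, forces certain nesting relations among the arcs at $i+1$ and $j+1$. I expect this to be the main obstacle: there are several sub-cases, and in each one must expand both $\tau_i\tau_j(\pi)$ and $\tau_j\tau_i(\pi)$ explicitly (potentially as multi-term skein expressions, since both $s_i$ and $s_j$ may create crossings simultaneously) and match them. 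The saving grace is that the operators $\tau_i$ and $\tau_j$ still modify \emph{different} arcs locally — $\tau_i$ reorganizes how the union $B_i \cup B_{i+1}$ is split using vertices $i, i+1$, while $\tau_j$ does so using $j, j+1$ — so even when the same block is involved, the two reorganizations commute as set-partition operations; one then checks the signs agree, which follows from the fact (used throughout Figure~\ref{SkeinRelationsFigure} and the definition of $\rho_i$ in Lemma~\ref{rhos-satisfy-coxeter}) that the skein signs depend only on the valences at the two active vertices, and those valences change in the same way regardless of order. I would organize regime (b) as a finite check, invoking Observation~\ref{almost-noncrossing-observation} to enumerate exactly how $s_i(\pi)$ can cross, and appealing to the pictorial form of $\sigma_1$–$\sigma_4$ to keep the bookkeeping manageable rather than writing out all 16-term expressions.
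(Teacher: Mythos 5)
Your regime (a) is sound, and is in fact a cleaner route through part of the argument than the paper takes: since $\tau_i(\pi)$ is determined by, and modifies only, the pair of blocks $B_i(\pi), B_{i+1}(\pi)$ (this is immediate from Definition~\ref{skein-definition} and the observation that $s_i(\pi)$ crosses at $i$ exactly when $B_i(\pi) \neq B_{i+1}(\pi)$ and both blocks are non-singletons), when $\{i,i+1\}$ is disjoint from $B_j(\pi) \cup B_{j+1}(\pi)$ the two operators act on disjoint data and commute term by term. The paper instead verifies even the all-blocks-distinct case by writing out a $16$-term expansion, so here your argument genuinely buys something.

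The gap is in your regime (b), which is where the real content of Lemma~\ref{second-coxeter-relation} lies, and your proposal does not prove it. When one of $B_i(\pi), B_{i+1}(\pi)$ coincides with one of $B_j(\pi), B_{j+1}(\pi)$ (the paper's Cases 3--5), the heuristic that ``the two reorganizations commute as set-partition operations'' with only a sign check to verify is not correct as stated: the operator applied first splits or merges the shared block, so the operator applied second acts on blocks different from those of $\pi$, there is no term-by-term correspondence between the two orders of application, and the skein flavor ($\sigma_1$--$\sigma_4$) used at the second step can differ from what it would have been on $\pi$. In particular, in the doubly overlapping configuration $B_i(\pi) = B_{j+1}(\pi)$ and $B_{i+1}(\pi) = B_j(\pi)$, each composition produces more intermediate terms than survive, and equality holds only after genuine cancellation; the paper's Case 5 is an $11$-term identity which it explicitly notes is \emph{not} a degeneration of the $16$-term Case 3, so no uniform ``same terms, same signs'' argument is available. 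Your fallback plan --- expand both $\tau_i\tau_j(\pi)$ and $\tau_j\tau_i(\pi)$ and match them, with a finite enumeration of block coincidences --- is exactly what the paper does, but until those expansions are actually carried out, including the degenerate subcases where the blocks have one or zero extra elements (which change which terms appear and with what signs), the overlapping cases remain unverified and the lemma is not established.
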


\begin{figure}
\centering
\includegraphics[scale = 0.6]{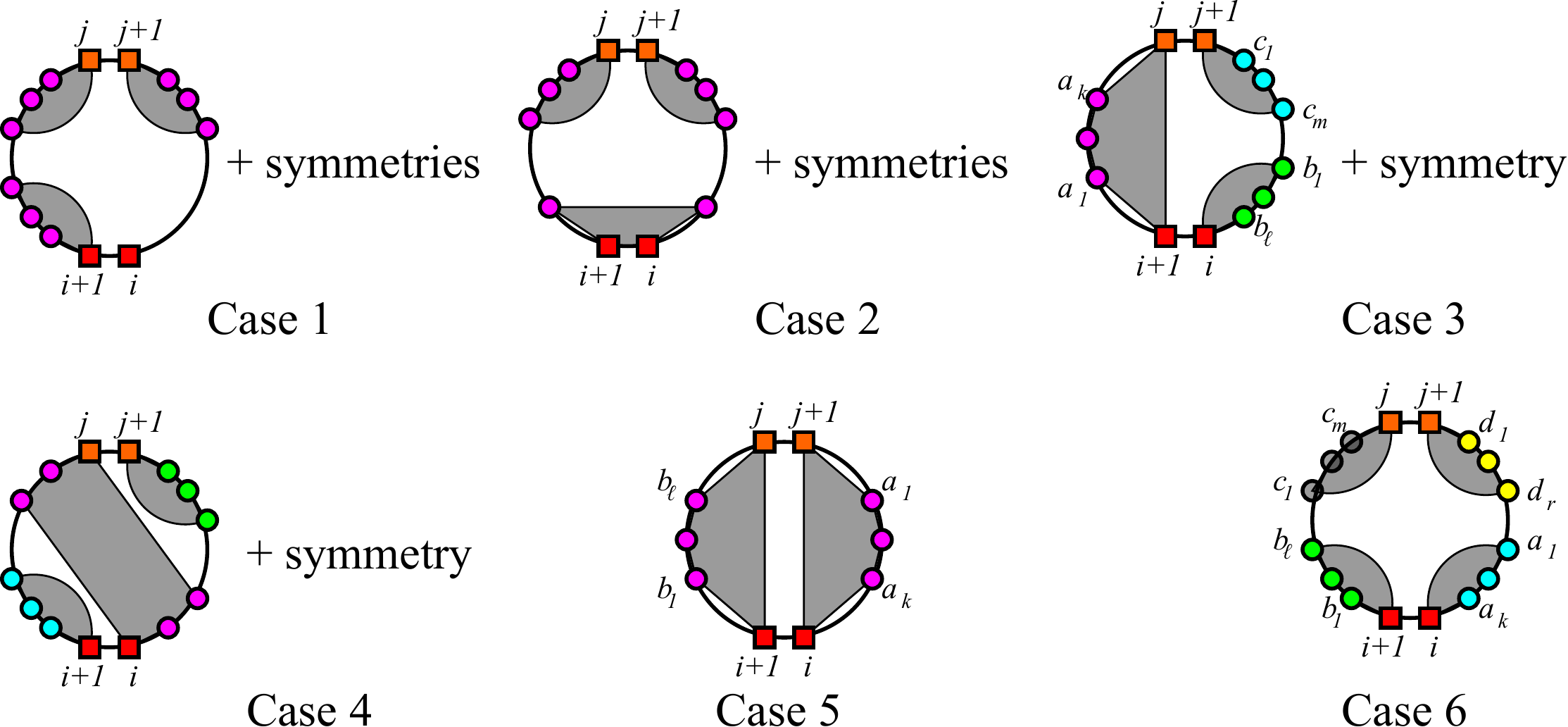}
\caption{The six cases of Lemma~\ref{second-coxeter-relation}.  
The numbers of green, blue, yellow, and gray vertices are all at least one.
The number of purple vertices could be zero.}
\label{coxeter-two-figure}
\end{figure}

\begin{proof}
Let $\pi \in NC(n)$.  We show that $\tau_i \tau_j (\pi) = \tau_j \tau_i (\pi)$.  
Our analysis breaks up into six cases depending on the structure of $\pi$ near the indices $i$ and $j$.
These cases are illustrated in Figure~\ref{coxeter-two-figure}.

{\bf Case 1:} {\it At least one of the indices $i, i+1, j,$ or $j+1$ has valence $0$ in $\pi$.}

The situation when $\val_{\pi}(i) = 0$ and $B_i(\pi), B_j(\pi),$ and $B_{j+1}(\pi)$ are distinct
is shown in the upper left of Figure~\ref{coxeter-two-figure}.  

We suppose that  $i$ has valence 0 in $\pi$ (the argument when one of $i+1, j,$ or $j+1$ has valence $0$ in $\pi$ is similar).
Since $|i - j| > 1$,  in every skein basis element $\pi'$ appearing in $\tau_j(\pi) \in V(n)$ we
have $\val_{\pi'}(i) = 0$, so that $s_i(\pi')$ is noncrossing and $B_i(\pi') \neq B_{i+1}(\pi')$.
 This means that $\tau_i \tau_j(\pi) = s_i(\tau_j(\pi))$.  On the other hand,
we also have that the set partition $s_i(\pi)$ is noncrossing and $B_i(\pi) \neq B_{i+1}(\pi)$.
 This means 
 that $s_i(\tau_j(\pi)) = \tau_j(s_i(\pi)) = \tau_j \tau_i(\pi)$.  
Combining, we get that $\tau_i \tau_j (\pi) = \tau_j \tau_i (\pi)$.  

{\bf Case 2:}  {\it $B_i(\pi) = B_{i+1}(\pi)$ or $B_j(\pi) = B_{j+1}(\pi)$.}

The situation when $B_i(\pi) = B_{i+1}(\pi)$ and $B_j(\pi) \neq B_{j+1}(\pi)$ is shown in the upper center of Figure~\ref{coxeter-two-figure}.

If $B_i(\pi) = B_{i+1}(\pi)$, then (because $|i - j| > 1$) we have that $B_i(\pi') = B_{i+1}(\pi')$ for every skein basis element $\pi'$
appearing in
$\tau_j(\pi)$.  This means that $\tau_i \tau_j(\pi) = - \tau_j(\pi) = \tau_j(-\pi) = \tau_j \tau_i(\pi)$. 
The case $B_j(\pi) = B_{j+1}(\pi)$ is similar.

{\bf Case 3:}  {\it The valences $\val_{\pi}(i), \val_{\pi}(i+1), \val_{\pi}(j),$ and $\val_{\pi}(j+1)$ are all positive.
$B_i(\pi) \neq B_{i+1}(\pi)$ and $B_j(\pi) \neq B_{j+1}(\pi)$, but either $B_i(\pi) = B_{j+1}(\pi)$ or 
$B_{i+1}(\pi) = B_j(\pi)$ (but not both).}

The situation when $B_{i+1}(\pi) = B_j(\pi)$ is shown in the upper right of Figure~\ref{coxeter-two-figure}.

We treat the possibility $B_{i+1}(\pi) = B_j(\pi)$, which necessarily implies that $B_i(\pi) \neq B_{j+1}(\pi)$.  
The argument for the other possibilities is similar and left for the reader.
Let $B_i(\pi) = \{i, b_1, \dots b_{\ell}\}, B_{i+1}(\pi) = \{i+1, j, a_1, \dots, a_k\},$ and 
$B_{j+1}(\pi) = \{j+1, c_1, \dots, c_m\}$.
We have that $k \geq 0, \ell \geq 1,$ and $m \geq 1$.
If $k , \ell, m > 1$, we have that the common value of 
$\tau_i \tau_j(\pi)$ and $\tau_j \tau_i(\pi)$ is
$\pi_1 + \cdots + \pi_8 - \pi_9 - \cdots - \pi_{16}$, where $\pi_1 = \pi$ and $\pi_2, \dots, \pi_{16}$ are obtained
from $\pi$ by replacing $B_i(\pi), B_{i+1}(\pi),$ and $B_{j+1}(\pi)$ with the blocks$\dots$
\begin{itemize}
\item $\{i, b_1, \dots, b_{\ell}\}, \{i+1, a_1, \dots, a_k, c_1, \dots, c_m\},$ and $\{j, j+1\}$ for $\pi_2$,
\item $\{i, i+1\}, \{j, a_1, \dots, a_k, b_1, \dots, b_{\ell}\},$ and $\{j+1, c_1, \dots, c_m\}$ for $\pi_3$,
\item $\{i, i+1\}, \{j, j+1\},$ and $\{a_1, \dots, a_k, b_1, \dots, b_{\ell}, c_1, \dots, c_m\}$ for $\pi_4$,
\item $\{i, i+1, b_1, \dots, b_{\ell}\}, \{j, j+1, a_1, \dots, a_k\},$ and $\{c_1, \dots, c_m\}$ for $\pi_5$,
\item $\{i, i+1, b_1, \dots, b_{\ell}\}, \{j, j+1, c_1, \dots, c_m\},$ and $\{a_1, \dots, a_k\}$ for $\pi_6$,
\item $\{i, i+1, j, j+1, a_1, \dots, a_k\}, \{b_1, \dots, b_{\ell}\},$ and $\{c_1, \dots, c_m\}$ for $\pi_7$,
\item $\{i, i+1, a_1, \dots, a_k\}, \{j, j+1, c_1, \dots, c_m\},$ and $\{b_1, \dots, b_{\ell}\}$ for $\pi_8$,
\item $\{i, b_1, \dots, b_{\ell}\}, \{i+1, j, j+1, a_1, \dots, a_k\},$ and $\{c_1, \dots, c_m\}$ for $\pi_9$,
\item $\{i, b_1, \dots, b_{\ell}\}, \{i+1, a_1, \dots, a_k\},$ and $\{j, j+1, c_1, \dots, c_m\}$ for $\pi_{10}$,
\item $\{i, i+1\}, \{j, j+1, a_1, \dots, a_k, b_1, \dots, b_{\ell}\},$ and $\{c_1, \dots, c_m\}$ for $\pi_{11}$,
\item $\{i, i+1\}, \{j, j+1, c_1, \dots, c_m\},$ and $\{a_1, \dots, a_k, b_1, \dots, b_{\ell}\}$ for $\pi_{12}$,
\item $\{i, i+1, j, a_1, \dots, a_k\}, \{j+1, c_1, \dots, c_m\},$ and $\{b_1, \dots, b_{\ell}\}$ for $\pi_{13}$,
\item $\{i, i+1, a_1, \dots, a_k, c_1, \dots, c_m\}, \{j, j+1\},$ and $\{b_1, \dots, b_{\ell}\}$ for $\pi_{14}$,
\item $\{i, i+1, b_1, \dots, b_{\ell}\}, \{j, a_1, \dots, a_k\},$ and $\{j+1, c_1, \dots, c_m\}$ for $\pi_{15}$, and
\item $\{i, i+1, b_1, \dots, b_{\ell}\}, \{j, j+1\},$ and $\{a_1, \dots, a_k, c_1, \dots, c_m\}$ for $\pi_{16}$.
\end{itemize}
In the ``degenerate" cases where $k, \ell, m > 1$ is not satisfied, we modify the above expression as follows
to obtain the common value of $\tau_i \tau_j(\pi)$ and $\tau_j \tau_i(\pi)$.
If $k = 1$, we remove the term involving $\pi_6$.  
If $k = 0$, we remove the terms involving $\pi_6, \pi_{10},$ and $\pi_{15}$.
If $\ell = 1$, we remove the terms involving $\pi_7, \pi_8, \pi_{13},$ and $\pi_{14}$.
If $m = 1$, we remove the terms involving $\pi_5, \pi_7, \pi_9,$ and $\pi_{11}$.
If $k = 0$ and $\ell = 1$, we remove the terms involving $\pi_6, \pi_7, \pi_8, \pi_{10}, \pi_{12}, \pi_{13}, 
\pi_{14},$ and $\pi_{15}$.
If $k = 0$ and $m = 1$, we remove the terms involving
$\pi_5, \pi_6, \pi_9, \pi_{10}, \pi_{11}, \pi_{15},$ and $\pi_{16}$.

{\bf Case 4:}  {\it The valences $\val_{\pi}(i), \val_{\pi}(i+1), \val_{\pi}(j),$ and $\val_{\pi}(j+1)$ are all positive.
$B_i(\pi) \neq B_{i+1}(\pi)$ and $B_j(\pi) \neq B_{j+1}(\pi)$, but $B_i(\pi) = B_j(\pi)$ or $B_{i+1}(\pi) = B_{j+1}(\pi)$.}

This is illustrated in the lower left of Figure~\ref{coxeter-two-figure}.
This is similar to Case 3.  The common value of $\tau_i \tau_j(\pi)$ and $\tau_j \tau_i(\pi)$
is a $16$-term expression non-degenerate cases and shortens in a similar way in degenerate
cases.  We leave the details to the reader.

{\bf Case 5:}  {\it The valences $\val_{\pi}(i), \val_{\pi}(i+1), \val_{\pi}(j),$ and $\val_{\pi}(j+1)$ are all positive.
$B_i(\pi) \neq B_{i+1}(\pi)$ and $B_j(\pi) \neq B_{j+1}(\pi)$, but $B_i(\pi) = B_{j+1}(\pi)$ and 
$B_{i+1}(\pi) = B_j(\pi).$}

This is illustrated in the lower middle of Figure~\ref{coxeter-two-figure}.

Write $B_i(\pi) = B_{j+1}(\pi) = \{i, j+1, a_1, \dots, a_k\}$
and $B_{i+1}(\pi) = B_j(\pi) = \{i+1, j, b_1, \dots, b_{\ell}\}$.  We have $k, \ell \geq 0$.  If $k, \ell > 1$,
the common value of  $\tau_i \tau_j(\pi)$ and $\tau_j \tau_i(\pi)$ is 
$\pi_1 + \cdots + \pi_5 - \pi_6 - \cdots - \pi_{11}$, where
$\pi_1 = \pi$ and $\pi_2, \dots, \pi_{11}$ are obtained from $\pi$ by replacing $B_i(\pi)$ and $B_j(\pi)$ with the 
blocks$\dots$
\begin{itemize}
\item $\{i, i+1, j, j+1, b_1, \dots, b_{\ell}\}$ and $\{a_1, \dots, a_k\}$ for $\pi_2$,
\item $\{i, i+1, b_1, \dots, b_{\ell}\}$ and $\{j, j+1, a_1, \dots, a_k\}$ for $\pi_3$,
\item $\{i, i+1, a_1, \dots, a_k\}$ and $\{j, j+1, b_1, \dots, b_{\ell}\}$ for $\pi_4$,
\item $\{i, i+1, j, j+1, a_1, \dots, a_k\}$ and $\{b_1, \dots, b_{\ell}\}$ for $\pi_5$,
\item $\{i, a_1, \dots, a_k\}$ and $\{i+1, j, j+1, b_1, \dots, b_{\ell}\}$ for $\pi_6$,
\item $\{i, j, j+1, a_1, \dots, a_k\}$ and $\{i+1, b_1, \dots, b_{\ell}\}$ for $\pi_7$,
\item $\{i, i+1\}$ and $\{j, j+1, a_1, \dots, a_k, b_1, \dots, b_{\ell}\}$ for $\pi_8$,
\item $\{i, i+1, j, b_1, \dots, b_{\ell}\}$ and $\{j+1, a_1, \dots, a_k\}$ for $\pi_9$,
\item $\{i, i+1, j+1, a_1, \dots, a_k\}$ and $\{j, b_1, \dots, b_{\ell}\}$ for $\pi_{10}$, and
\item $\{i, i+1, a_1, \dots, a_k, b_1, \dots, b_{\ell}\}$ and $\{j, j+1\}$ for $\pi_{11}$.
\end{itemize}
If $k, \ell > 1$ does not hold, we modify this expression as follows to get the common value
$\tau_i \tau_j(\pi) = \tau_j \tau_i(\pi)$. 
If $k = 1$, we remove the term involving $\pi_2$.
If $\ell = 1$, we remove the term involving $\pi_5$.
If $k = 0$, we remove the terms involving $\pi_2, \pi_6,$ and $\pi_9$.
If $\ell = 0$, we remove the terms involving $\pi_5, \pi_7,$ and $\pi_{10}$.

The reader may wonder whether Case 5 is a degeneration of Case 3.  In fact, it is not, as none of the  degenerations
of Case 3 involve removing $16 - 11 = 5$ terms.

{\bf Case 6:}  {\it The valences $\val_{\pi}(i), \val_{\pi}(i+1), \val_{\pi}(j),$ and $\val_{\pi}(j+1)$ are all positive.
The blocks $B_i(\pi), B_j(\pi), B_{i+1}(\pi),$ and $B_{j+1}(\pi)$ are all distinct.}

This case is shown on the lower right of Figure~\ref{coxeter-two-figure}.

 Let $B_i(\pi) = \{i, a_1, \dots, a_k\}, B_{i+1}(\pi) = \{i+1, b_1, \dots, b_{\ell}\}, B_j(\pi) = \{j, c_1, \dots, c_m\},$ and
 $B_{j+1}(\pi) = \{j+1, d_1, \dots, d_r\}$.  
 We have $k, \ell, m, r \geq 1$.  When $k, \ell, m, r > 1$,
 the common value of 
$\tau_i \tau_j(\pi)$ and $\tau_j \tau_i(\pi)$ is  
$\pi_1 + \cdots + \pi_8 - \pi_9 - \cdots - \pi_{16}$, where $\pi_1 = \pi$ and 
$\pi_2, \dots, \pi_{16}$ are obtained from $\pi$ by replacing $B_i(\pi), B_{i+1}(\pi), B_j(\pi),$ and 
$B_{j+1}(\pi)$ with the 
blocks$\dots$
\begin{itemize}
\item $\{i, a_1, \dots, a_k\}, \{i+1, b_1, \dots, b_{\ell}\}, \{j, j+1\},$ and $\{c_1, \dots, c_m, d_1, \dots, d_r\}$ for $\pi_2$,
\item $\{i, i+1\}, \{a_1, \dots, a_k, b_1, \dots, b_{\ell}\}, \{j, c_1, \dots, c_m\},$ and $\{j+1, d_1, \dots, d_r\}$ for $\pi_3$,
\item $\{i, i+1\}, \{a_1, \dots, a_k, b_1, \dots, b_{\ell}\}, \{j, j+1\},$ and $\{c_1, \dots, c_m, d_1, \dots, d_r\}$ for $\pi_4$,
\item $\{i, i+1, b_1, \dots, b_{\ell}\}, \{a_1, \dots, a_k\}, \{j, j+1, c_1, \dots, c_m\},$ and $\{d_1, \dots, d_r\}$ for $\pi_5$,
\item $\{i, i+1, b_1, \dots, b_{\ell}\}, \{a_1, \dots, a_k\}, \{j, j+1, d_1, \dots, d_r\},$ and $\{c_1, \dots, c_m\}$ for $\pi_6$,
\item $\{i, i+1, a_1, \dots, a_k\}, \{b_1, \dots, b_{\ell}\}, \{j, j+1, c_1, \dots, c_m\},$ and $\{d_1, \dots, d_r\}$ for $\pi_7$,
\item $\{i, i+1, a_1, \dots, a_k\}, \{b_1, \dots, b_{\ell}\}, \{j, j+1, d_1, \dots, d_r\},$ and $\{c_1, \dots, c_m\}$ for $\pi_8$,
\item $\{i, a_1, \dots, a_k\}, \{i+1, b_1, \dots, b_{\ell}\}, \{j, j+1, c_1, \dots, c_m\},$ and $\{d_1, \dots, d_r\}$ for $\pi_9$,
\item $\{i, a_1, \dots, a_k\}, \{i+1, b_1, \dots, b_{\ell}\}, \{j, j+1, d_1, \dots, d_r\},$ and $\{c_1, \dots, c_m\}$ for 
$\pi_{10}$
\item $\{i, i+1\}, \{a_1, \dots, a_k, b_1, \dots, b_{\ell}\}, \{j, j+1, c_1, \dots, c_m\},$ and $\{d_1, \dots, d_r\}$ for 
$\pi_{11}$
\item $\{i, i+1\}, \{a_1, \dots, a_k, b_1, \dots, b_{\ell}\}, \{j, j+1, d_1, \dots, d_r\},$ and $\{c_1, \dots, c_m\}$ for 
$\pi_{12}$
\item $\{i, i+1, b_1, \dots, b_{\ell}\}, \{a_1, \dots, a_k\}, \{j, c_1, \dots, c_m\},$ and $\{j+1, d_1, \dots, d_r\}$ for 
$\pi_{13}$,
\item $\{i, i+1, b_1, \dots, b_{\ell}\}, \{a_1, \dots, a_k\}, \{j, j+1\},$ and $\{c_1, \dots, c_m, d_1, \dots, d_r\}$ for 
$\pi_{14}$,
\item $\{i, i+1, a_1, \dots, a_k\}, \{b_1, \dots, b_{\ell}\}, \{j, c_1, \dots, c_m\},$ and $\{j+1, d_1, \dots, d_r\}$ for
$\pi_{15}$,  and
\item $\{i, i+1, a_1, \dots, a_k\}, \{b_1, \dots, b_{\ell}\}, \{j, j+1\},$ and $\{c_1, \dots, c_m, d_1, \dots, d_r\}$ for 
$\pi_{16}$.
\end{itemize}
If $k, \ell, r, m > 1$ does not hold, we alter the above expression as follows to get the common value
of $\tau_i \tau_j(\pi)$ and $\tau_j \tau_i(\pi)$.
If $k = 1$, we remove the terms involving $\pi_5, \pi_6, \pi_{13},$ and $\pi_{14}$.  
If $\ell = 1$, we remove the terms involving $\pi_7, \pi_8, \pi_{15},$ and $\pi_{16}$.
If $m = 1$, we remove the terms involving $\pi_6, \pi_8, \pi_{10},$ and $\pi_{12}$.
If $r = 1$, we remove the terms involving $\pi_5, \pi_7, \pi_9,$ and $\pi_{11}$.
This concludes the proof of Lemma~\ref{second-coxeter-relation}.
\end{proof}

Finally, we turn to the braid relation $\tau_i \tau_{i+1} \tau_i = \tau_{i+1} \tau_i \tau_{i+1}$.
The proof of this relation will be slightly shorter than that of Lemma~\ref{second-coxeter-relation}.

\begin{lemma}
\label{third-coxeter-relation}
For any $1 \leq i \leq n-2$ we have $\tau_i \tau_{i+1} \tau_i = \tau_{i+1} \tau_i \tau_{i+1}$. 
\end{lemma}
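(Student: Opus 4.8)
The plan is to verify the braid relation directly on the skein basis, exactly as in the proof of Lemma~\ref{second-coxeter-relation}: fix $\pi \in NC(n)$ and compute both $\tau_i \tau_{i+1} \tau_i(\pi)$ and $\tau_{i+1} \tau_i \tau_{i+1}(\pi)$, checking that they agree. The analysis breaks into cases according to the local structure of $\pi$ near the three consecutive vertices $i, i+1, i+2$. First I would dispatch the easy cases: if at least two of the blocks $B_i(\pi), B_{i+1}(\pi), B_{i+2}(\pi)$ coincide, or if two or more of the valences $\val_i(\pi), \val_{i+1}(\pi), \val_{i+2}(\pi)$ vanish, then in every intermediate skein basis element one of the factors acts by $\pm 1$ or by an honest transposition, and one reduces to the ordinary braid relation $s_i s_{i+1} s_i = s_{i+1} s_i s_{i+1}$ on set partitions together with $\rho_i$ from Lemma~\ref{rhos-satisfy-coxeter} — or one observes directly that a minus sign propagates symmetrically through both sides. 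In particular, when $B_i(\pi) = B_{i+1}(\pi) = B_{i+2}(\pi)$ both sides equal $-\pi$, and the ``two coincidences'' subcases and the ``two zero valences'' subcases behave like the corresponding easy cases in Lemma~\ref{second-coxeter-relation}.

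Next I would turn to the substantive cases, where the three vertices $i, i+1, i+2$ lie in at most some configuration that forces genuine skein resolutions. The main subdivision is: (a) the blocks $B_i(\pi), B_{i+1}(\pi), B_{i+2}(\pi)$ are pairwise distinct with all three valences positive; (b) exactly one coincidence among these three blocks, say $B_i(\pi) = B_{i+2}(\pi) \ne B_{i+1}(\pi)$, again with positive valences; and the handful of mixed cases where exactly one valence is zero and the remaining two blocks are distinct. In each, I would write $B_i(\pi) = \{i, a_1, \dots, a_k\}$, $B_{i+1}(\pi) = \{i+1, b_1, \dots, b_\ell\}$, $B_{i+2}(\pi) = \{i+2, c_1, \dots, c_m\}$ (suppressing whichever labels collapse), apply the defining recipe for $\tau$ (i.e. $s_i(\pi)$ or $\sigma(s_i(\pi))$ according to Lemma~\ref{sigma-restricts}) three times in each order, and expand. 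As the text warns, the non-degenerate expansions are $16$-term linear combinations of noncrossing partitions; I would present one such expansion explicitly as a list of block-replacements (as in Case~3 and Case~6 of Lemma~\ref{second-coxeter-relation}) and note that the two orders of composition produce the same list, with the degenerate cases $k=1$, $\ell=1$, $m=1$ (and combinations, including $k=0$ etc.\ when a block has valence $1$) obtained by striking the corresponding terms. Throughout, the sign bookkeeping is the same mechanism as in $\tau_i^2 = 1$: a term in which $i$ and $i+1$ (resp.\ $i+1$ and $i+2$) become blockmates picks up a sign under the next application of $\tau_i$ (resp.\ $\tau_{i+1}$), and these signs are symmetric between the two sides.

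The main obstacle is purely the combinatorial bookkeeping of the central non-degenerate cases: one must check that applying the skein resolutions in the two different orders really produces the same $16$-term expression, including that every term is noncrossing (which follows from Lemma~\ref{sigma-restricts} applied at each step) and that the degenerations are compatible — i.e.\ that ``first resolve at $i$, then degenerate'' and ``first degenerate, then resolve'' agree. I expect this to go through by the same direct inspection used for Lemma~\ref{second-coxeter-relation}, best carried out with the pictorial form of $\sigma_1$--$\sigma_4$ in Figure~\ref{SkeinRelationsFigure}; the reduction of the degenerate cases by term-deletion is forced once the non-degenerate identity is established, since both sides are obtained from the universal expansion by the same rule of discarding partitions with extra singletons. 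As in the previous lemma, I would write out one representative case in full and leave the remaining cases, which are entirely analogous, to the reader.
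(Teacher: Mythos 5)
Your proposal follows essentially the same route as the paper's proof: a case analysis on the local structure of $\pi$ near $i, i+1, i+2$, dispatching the easy cases (all three blocks equal, or two or more zero valences) by sign and transposition bookkeeping, and verifying the substantive cases by writing out the explicit block-replacement expansions once and obtaining the degenerate cases by deleting terms. One small correction: the representative subcase you name for ``exactly one coincidence,'' namely $B_i(\pi)=B_{i+2}(\pi)\neq B_{i+1}(\pi)$ with all three valences positive, is vacuous for a noncrossing $\pi$ (it would force $\{i+1\}$ to be a singleton block), so the coincidence must be $B_i(\pi)=B_{i+1}(\pi)$ or $B_{i+1}(\pi)=B_{i+2}(\pi)$, and that case produces a five-term rather than a sixteen-term expansion; only the case of three distinct blocks with positive valences gives the full sixteen terms.
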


\begin{figure}
\centering
\includegraphics[scale = 0.6]{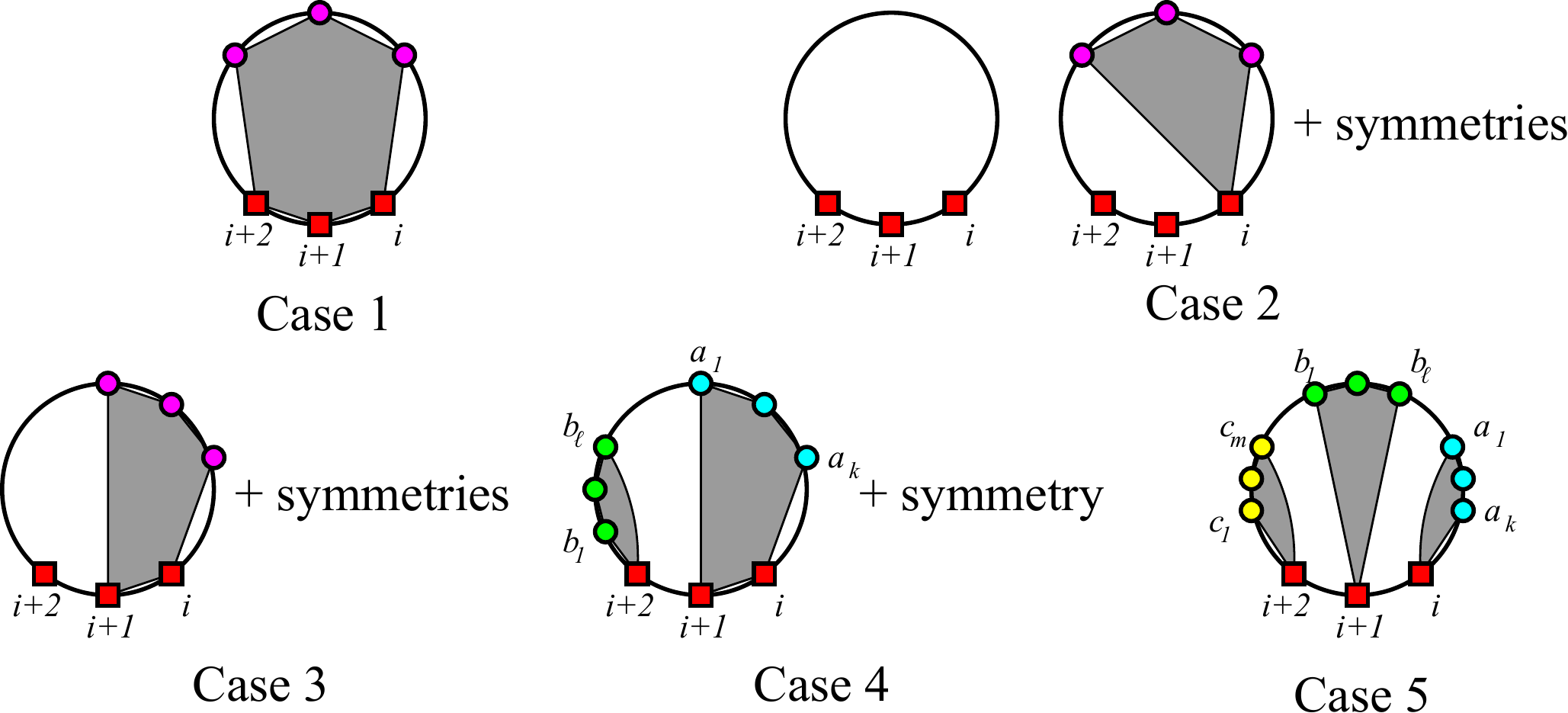}
\caption{The five cases of Lemma~\ref{third-coxeter-relation}.  
The numbers of green, blue, and yellow vertices are all at least one.
The number of purple vertices could be zero.}
\label{coxeter-three-figure}
\end{figure}

\begin{proof}
Let $\pi \in NC(n)$.  We show that $\tau_i \tau_{i+1} \tau_i(\pi) = \tau_{i+1} \tau_i \tau_{i+1}(\pi)$.
Our analysis breaks up into five cases depending on the structure of $\pi$ near the indices $i, i+1,$ and $i+2$.
These cases are illustrated in Figure~\ref{coxeter-three-figure}.

{\bf Case 1:} {\it $B_i(\pi) = B_{i+1}(\pi) = B_{i+2}(\pi)$.}

This case is shown on the upper left of Figure~\ref{coxeter-three-figure}.

In this case we have $\tau_i \tau_{i+1} \tau_i(\pi) = -\pi = \tau_{i+1} \tau_i \tau_{i+1}(\pi)$.

{\bf Case 2:} {\it At least two of the valences $\val_{\pi}(i), \val_{\pi}(i+1), \val_{\pi}(i+2)$ equal $0$.}

This case is shown on the upper right of Figure~\ref{coxeter-three-figure}.

In this case we have 
$\tau_i \tau_{i+1} \tau_i(\pi) = s_i s_{i+1} s_i(\pi) = s_{i+1} s_i s_{i+1}(\pi) =  \tau_{i+1} \tau_i \tau_{i+1}(\pi)$.

{\bf Case 3:} {\it Precisely one of the valences $\val_{\pi}(i), \val_{\pi}(i+1), \val_{\pi}(i+2)$ equals $0$ and
two of the blocks $B_i(\pi), B_{i+1}(\pi), B_{i+2}(\pi)$ coincide.}

This case is shown on the lower left of Figure~\ref{coxeter-three-figure}; two other possibilities are symmetric and not shown.

We leave it to the reader to check that we have 
$\tau_i \tau_{i+1} \tau_i(\pi) = -\pi = \tau_{i+1} \tau_i \tau_{i+1}(\pi)$.

{\bf Case 4:} {\it All of the valences $\val_{\pi}(i), \val_{\pi}(i+1), \val_{\pi}(i+2)$ are positive and 
precisely two of the blocks $B_i(\pi), B_{i+1}(\pi), B_{i+2}(\pi)$ coincide.}

This case is shown in the lower middle of Figure~\ref{coxeter-three-figure}; one other symmetric possibility is not shown.

The fact that $\pi$ is noncrossing and the assumption about valences forces $B_i(\pi) = B_{i+1}(\pi)$ or 
$B_{i+1}(\pi) = B_{i+2}(\pi)$.  We handle the possibility
$B_i(\pi) = B_{i+1}(\pi)$; the argument for the other possibility is symmetric.  
Let $B_i(\pi) = B_{i+1}(\pi) = \{i, i+1, a_1, \dots, a_k\}$ and $B_{i+2}(\pi) = \{i+2, b_1, \dots, b_{\ell}\}$. 
We have $k \geq 0$ and $\ell \geq 1$.
If $k, \ell > 1$, the common value of 
$\tau_i \tau_{i+1} \tau_i(\pi)$ and $\tau_{i+1} \tau_i \tau_{i+1}(\pi)$ is
$\pi_1 + \pi_2 - \pi_3 - \pi_4 - \pi_5$, where $\pi_1, \dots, \pi_5$ are obtained from $\pi$ by replacing the 
blocks $B_i$ and $B_{i+2}$ with the blocks$\dots$
\begin{itemize}
\item $\{i, i+1, i+2\}$ and $\{a_1, \dots, a_k, b_1, \dots, b_{\ell}\}$ for $\pi_1$,
\item $\{i, a_1, \dots, a_k\}$ and $\{i+1, i+2, b_1, \dots, b_{\ell}\}$ for $\pi_2$,
\item $\{i, a_1, \dots, a_k, b_1, \dots, b_{\ell}\}$ and $\{i+1, i+2\}$ for $\pi_3$,
\item $\{i, i+1, i+2, a_1, \dots, a_k\}$ and $\{b_1, \dots, b_{\ell}\}$ for $\pi_4$, and
\item $\{i, i+1, i+2, b_1, \dots, b_{\ell}\}$ and $\{a_1, \dots, a_k\}$ for $\pi_5$.
\end{itemize}
For the ``degenerate" cases where $k = 1, k = 0,$ or $\ell = 1$, we alter the above expression as follows
to get the common value of
$\tau_i \tau_{i+1} \tau_i(\pi)$ and $\tau_{i+1} \tau_i \tau_{i+1}(\pi)$.
If $k = 1$, we remove the term involving $\pi_5$.  If $k = 0$, we remove the terms involving
$\pi_2$ and $\pi_5$.  If $\ell = 1$, we remove the term involving $\pi_4$.  If $\ell = 1$ and $k = 0$,
we remove the terms involving $\pi_1, \pi_2, \pi_4,$ and $\pi_5$.

{\bf Case 5:} {\it All of the valences $\val_{\pi}(i), \val_{\pi}(i+1), \val_{\pi}(i+2)$ are positive and 
the blocks $B_i(\pi), B_{i+1}(\pi),$ and $B_{i+2}(\pi)$ are distinct.}

This case is shown on the lower right of Figure~\ref{coxeter-three-figure}.

Let $B_i(\pi) = \{i, a_1, \dots, a_k\}, B_{i+1}(\pi) = \{i+1, b_1, \dots, b_{\ell}\},$ and 
$B_{i+2}(\pi) = \{i+2, c_1, \dots, c_m\}$.
We have $k, \ell, m \geq 1$. 
If $k, \ell, m > 1$, then
the common value of $\tau_i \tau_{i+1} \tau_i(\pi)$ and $\tau_{i+1} \tau_i \tau_{i+1}(\pi)$ is
$\pi_1 + \cdots + \pi_8 - \pi_9 - \cdots - \pi_{16}$, where $\pi_1 = \pi$ and $\pi_2, \dots, \pi_{16}$ are obtained
from $\pi$ by replacing $B_i(\pi), B_{i+1}(\pi),$ and $B_{i+2}(\pi)$ with the blocks$\dots$
\begin{itemize}
\item $\{i, a_1, \dots, a_k\}, \{i+1, i+2\},$ and $\{b_1, \dots, b_{\ell}, c_1, \dots, c_m\}$ for $\pi_2$,
\item $\{i, i+1\}, \{i+2, a_1, \dots, a_k\},$ and $\{b_1, \dots, b_{\ell}, c_1, \dots, c_m\}$ for $\pi_3$,
\item $\{i, c_1, \dots, c_m\}, \{i+1, i+2\},$ and $\{a_1, \dots, a_k, b_1, \dots, b_{\ell}\}$ for $\pi_4$,
\item $\{i, i+1\}, \{i+2, c_1, \dots, c_m\},$ and $\{a_1, \dots, a_k, b_1, \dots, b_{\ell}\}$ for $\pi_5$,
\item $\{i, i+1, a_1, \dots, a_k\}, \{i+2, b_1, \dots, b_{\ell}\},$ and $\{c_1, \dots, c_m\}$ for $\pi_6$,
\item $\{i, i+1, i+2\}, \{a_1, \dots, a_k, c_1, \dots, c_m\},$ and $\{b_1, \dots, b_{\ell}\}$ for $\pi_7$,
\item $\{i, b_1, \dots, b_{\ell}\}, \{i+1, i+2, c_1, \dots, c_m\},$ and $\{a_1, \dots, a_k\}$ for $\pi_8$, 
\item $\{i, a_1, \dots, a_k\}, \{i+1, i+2, b_1, \dots, b_{\ell}\},$ and $\{c_1, \dots, c_m\}$ for $\pi_9$,
\item $\{i, a_1, \dots, a_k, c_1, \dots, c_m\}, \{i+1, i+2\},$ and $\{b_1, \dots, b_{\ell}\}$ for $\pi_{10}$,
\item $\{i, i+1\}, \{i+2, a_1, \dots, a_k, c_1, \dots, c_m\},$ and $\{b_1, \dots, b_{\ell}\}$ for $\pi_{11}$,
\item $\{i, i+1, i+2, a_1, \dots, a_k\}, \{b_1, \dots, b_{\ell}\},$ and $\{c_1, \dots, c_m\}$ for $\pi_{12}$,
\item $\{i, b_1, \dots, b_{\ell}, c_1, \dots, c_m\}, \{i+1, i+2\},$ and $\{a_1, \dots, a_k\}$ for $\pi_{13}$,
\item $\{i, i+1\}, \{i+2, a_1, \dots, a_k, b_1, \dots, b_{\ell}\},$ and $\{c_1, \dots, c_m\}$ for $\pi_{14}$,
\item $\{i, i+1, b_1, \dots, b_{\ell}\}, \{i+2, c_1, \dots, c_m\},$ and $\{a_1, \dots, a_k\}$ for $\pi_{15}$, and
\item $\{i, i+1, i+2, c_1, \dots, c_m\}, \{a_1, \dots, a_k\},$ and $\{b_1, \dots, b_{\ell}\}$ for $\pi_{16}$.
\end{itemize}
In the ``degenerate" cases where any of $k, \ell, m$ equal $1$, we alter the above expression
as follows to obtain the common value of  $\tau_i \tau_{i+1} \tau_i(\pi)$ and $\tau_{i+1} \tau_i \tau_{i+1}(\pi)$.
If $k = 1$, we remove the terms involving $\pi_8, \pi_{13}, \pi_{15},$ and $\pi_{16}$.  If $\ell = 1$,
we remove the terms involving $\pi_7, \pi_{10}, \pi_{11}, \pi_{12},$ and $\pi_{16}$.  If $m = 1$, we
remove the terms involving $\pi_6, \pi_9, \pi_{12},$ and $\pi_{14}$.
\end{proof}

We arrive at the main result of this section.

\begin{theorem}
\label{module-theorem}
Let $n, k, s \geq 0$.
The $\QQ$-vector space $V(n)$ has the structure of a $\symm_n$-module given by
$s_i.v = \tau_i(v)$, for $1 \leq i \leq n-1$ and $v \in V(n)$.    The subsapces 
$V(n, k, s) \subseteq V(n, k) \subseteq V(n)$ are submodules for this action.
The representing matrices for any permutation  $w \in \symm_n$ with respect to the skein bases
of  $V(n), V(n, k),$ or $V(n, k, s)$ have entries in $\ZZ$.
\end{theorem}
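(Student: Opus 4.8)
The plan is to assemble the three Coxeter-relation lemmas into the statement. First I would invoke the universal property of the Coxeter presentation of $\symm_n$: since Lemmas~\ref{first-coxeter-relation}, \ref{second-coxeter-relation}, and \ref{third-coxeter-relation} establish that the linear operators $\tau_1, \dots, \tau_{n-1}$ on $V(n)$ satisfy $\tau_i^2 = 1$, $\tau_i \tau_j = \tau_j \tau_i$ for $|i-j|>1$, and $\tau_i \tau_{i+1} \tau_i = \tau_{i+1}\tau_i\tau_{i+1}$, the assignment $s_i \mapsto \tau_i$ extends uniquely to a group homomorphism $\symm_n \to GL(V(n))$. One should first note that each $\tau_i$ is genuinely invertible, which is immediate from $\tau_i^2 = 1$ (so $\tau_i$ is its own inverse); hence $\tau_i \in GL(V(n))$ and the homomorphism makes sense. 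This equips $V(n)$ with the structure of a $\symm_n$-module via $s_i.v = \tau_i(v)$.

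Second, for the submodule claim I would use Lemma~\ref{sigma-restricts}, which asserts that the skein map $\sigma$ preserves both the number of blocks and the number of singleton blocks. Combined with the observation (recorded just after the definition of $\tau_i$) that each $\tau_i$ restricts to a linear endomorphism of $V(n,k)$ and of $V(n,k,s)$, this shows that every generator $s_i$ preserves these subspaces. Since $\symm_n$ is generated by $s_1, \dots, s_{n-1}$, it follows that the full action of $\symm_n$ stabilizes $V(n,k,s) \subseteq V(n,k) \subseteq V(n)$, so these are $\symm_n$-submodules.

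Third, for the integrality of the representing matrices, I would observe that by construction the matrix of each $\tau_i$ with respect to the skein basis $NC(n)$ has entries in $\{-1,0,1\}$: this is immediate from the three-case definition of $\tau_i$ together with Definition~\ref{skein-definition}, in which $\sigma(\pi)$ is written as a $\ZZ$-linear combination, with all coefficients $\pm 1$, of noncrossing partitions. An arbitrary $w \in \symm_n$ is a product $w = s_{i_1} \cdots s_{i_r}$ of generators, so its representing matrix is the corresponding product of the integer matrices of the $\tau_{i_j}$, hence again has entries in $\ZZ$. The identical argument applies on $V(n,k)$ and $V(n,k,s)$, whose skein bases are subsets of $NC(n)$.

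I do not expect any real obstacle here: the substance of the result lies entirely in the preceding lemmas, and the theorem is just the bookkeeping that packages Lemmas~\ref{first-coxeter-relation}--\ref{third-coxeter-relation} and~\ref{sigma-restricts} into a clean statement. The one subtlety worth a sentence is checking that $s_i \mapsto \tau_i$ descends to a homomorphism out of $\symm_n$ itself, rather than merely out of the braid group or the free group on the $s_i$; this is precisely what the relation $\tau_i^2 = 1$ of Lemma~\ref{first-coxeter-relation} supplies.
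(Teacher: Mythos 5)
Your proposal is correct and follows essentially the same route as the paper: the Coxeter relations from Lemmas~\ref{first-coxeter-relation}--\ref{third-coxeter-relation} give the $\symm_n$-action via the Coxeter presentation, Lemma~\ref{sigma-restricts} (with the definition of $\tau_i$) gives block- and singleton-preservation hence the submodule claims, and integrality follows because each $\tau_i$ has an integer matrix in the skein basis. The extra remarks you include (invertibility from $\tau_i^2=1$, descending from the free group to $\symm_n$) are fine and only make explicit what the paper leaves implicit.
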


The $\symm_n$-modules $V(n), V(n, k),$ and $V(n, k, s)$ will be called {\it skein modules}
since the action of $\symm_n$ is defined in terms of skein relations.

\begin{proof}
The $\symm_n$-module structure of $V(n)$ follows from Lemmas~\ref{first-coxeter-relation},
\ref{second-coxeter-relation}, and \ref{third-coxeter-relation}.  By Lemma~\ref{sigma-restricts} and 
the definition of $\tau_i$,
every term appearing in $\tau_i(\pi) \in V(n)$ has the same number of  blocks and the same number 
of singleton blocks as $\pi$.  It follows that the subspaces $V(n, k)$ and $V(n, k, s)$ are both closed
under this $\symm_n$-action.  The fact that the entries of the representing matrices of permutations have entries in 
$\ZZ$ is an immediate consequence of our definition of the $\tau_i$ operators and the fact that the skein relations
$\sigma_1 - \sigma_4$ only involve integer coefficients.
\end{proof}

We remark that the skein modules could have been constructed in a slightly different way.
Given an almost noncrossing partition $\pi \in ANC(n)$ which crosses at $i$,
let $\tilde{\sigma}(\pi)$ be the four term linear combination of noncrossing partitions given
in the skein relation $\sigma_4$.  In particular, if $B_i(\pi)$ or $B_{i+1}(\pi)$ has just two elements,
then some terms in $\tilde{\sigma}(\pi)$ will have more singleton blocks than $\pi$.

For $1 \leq i \leq n-1$, let $\tilde{\tau}_i: V(n) \rightarrow V(n)$ be the linear operator
defined  on a noncrossing partition $\pi$ of $[n]$ by
\begin{equation}
\tilde{\tau}_i(\pi) = \begin{cases}
s_i(\pi) & \text{if $s_i(\pi)$ is noncrossing and $B_i(\pi) = B_{i+1}(\pi)$,} \\
-\pi & \text{if $B_i(\pi) = B_{i+1}(\pi)$,} \\
\tilde{\sigma}(s_i(\pi)) & \text{if $s_i(\pi)$ is almost noncrossing.}
\end{cases}
\end{equation}
That is, we have that $\tilde{\tau}_i$ acts on $\pi$ just like $\tau_i$, except we only use 
the flavor $\sigma_4$ of our skein relations to resolve any crossings.
As such, the definition of $\tilde{\tau}_i$ is slightly more streamlined than that of $\tau_i$.

The calculations in this section (without needing to consider various ``degenerate" cases) imply that 
the assignment $s_i \mapsto \tilde{\tau}_i$ for $1 \leq i \leq n-1$ gives $V(n)$ an $\symm_n$-module structure,
which we denote $\widetilde{V}(n)$.  In contrast to the module structure on $V(n)$ 
in Theorem~\ref{module-theorem}, the action of the $s_i$ on $\widetilde{V}(n)$ preserve the total number of blocks of a 
noncrossing partition, but could increase the number of singleton blocks.
For fixed $n$ and $k$, we therefore have an $\symm_n$-stable filtration 
$\widetilde{V}(n, k, \geq n) \subseteq \widetilde{V}(n, k, \geq n-1) \subseteq \cdots \subseteq \widetilde{V}(n, k, \geq 0)$,
where $\widetilde{V}(n, k, \geq s)$ is the subspace of $\widetilde{V}(n)$ spanned by noncrossing partitions 
of $[n]$ with $k$ total blocks and at least $s$ singleton blocks.  It is easy to see that the quotient module
$\widetilde{V}(n, k, \geq s)/\widetilde{V}(n, k, \geq s+1)$ is isomorphic to the module $V(n, k, s)$ of Theorem~\ref{module-theorem}.
To avoid the use of filtrations, we make use of the slightly more complicated action $s_i \mapsto \tau_i$
instead of $s_i \mapsto \tilde{\tau}_i$.

\section{Module structure}
\label{Module}

For any $n, k, s \geq 0$,
Theorem~\ref{module-theorem} gives three  $\symm_n$-modules 
$V(n), V(n, k),$ and $V(n, k, s)$ defined in terms of skein relations.  In this section
we completely determine the isomorphism types of these skein modules.

Since the disjoint unions
$NC(n) = \biguplus_k NC(n, k)$ and $NC(n, k) = \biguplus_s NC(n, k, s)$ give rise to the 
$\symm_n$-module decompositions
$V(n) = \bigoplus_k V(n, k)$ and $V(n, k) = \bigoplus_s V(n, k, s)$, we need only 
determine the isomorphism type of $V(n, k, s)$.  It will turn out that the modules $V(n, k, s)$ 
will be parabolic inductions of modules of the form $V(n, k, 0)$, and that the modules 
$V(n, k, 0)$ are irreducible $\symm_n$-representations of flag shape.

Let $\pi \in NC(n)$ be a skein basis element of $V(n)$.  For any $1 \leq i \leq n-1$, the action of 
$s_i$ on $\pi$ only affects the blocks $B_i(\pi)$ and $B_{i+1}(\pi)$ containing $i$ and $i+1$; all other blocks of $\pi$ are fixed
by this action.
In particular, if the interval $[i, j] = \{i , i+1, \dots, j\}$ is a union of blocks of $\pi$ for some $1 \leq i < j \leq n$,
then the action of the subgroup 
$\symm_{[i, j]} = \langle s_i, s_{i+1}, \dots, s_{j-1} \rangle$ of $\symm_n$ on $\pi$ is completely determined by the 
restriction of $\pi$ to $[i, j]$.  This ``local property" of the action of $\symm_n$ on $V(n)$ is immediate from its combinatorial
definition; we formalize it for future use.

\begin{lemma}
\label{local-property}
Let $\pi \in NC(n)$ and $1 \leq i < j \leq n$.  Assume that the interval $[i, j]$ is a union of blocks of $\pi$.
Let $\bar{\pi} \in NC(j - i + 1)$ be the set partition of $[j - i + 1]$ obtained by restricting $\pi$ to the interval $[i, j]$
and decrementing indices.
Let $w \in \symm_{[i, j]}$ be a permutation of $n$
 fixing indices outside of $[i, j]$ and let $\bar{w}$ be the corresponding permutation in the smaller symmetric
 group $\symm_{j - i + 1}$.  
 
 For any skein basis element $\pi'$ appearing  in $w.\pi \in V(n)$, the interval $[i, j]$ is a union of blocks of $\pi'$.  Moreover,
 the blocks of $\pi'$ which do not meet the interval $[i, j]$ agree with the corresponding blocks of $\pi$.  Finally,
 the  restriction of the skein basis expansion of $w.\pi$ to the interval $[i, j]$ can be obtained by
 computing the skein basis expansion of $\bar{w}.\bar{\pi} \in V(j-i+1)$ and incrementing indices.
\end{lemma}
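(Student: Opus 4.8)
The plan is to reduce to the case where $w$ is a single adjacent transposition and then propagate along a word for $w$. Since $\symm_{[i,j]}$ is generated by $s_i, s_{i+1}, \dots, s_{j-1}$, I would write $w = s_{\ell_1} s_{\ell_2} \cdots s_{\ell_r}$ with each $i \leq \ell_t \leq j-1$ (not necessarily a reduced word) and induct on $r$, establishing the three assertions simultaneously; the case $r = 0$ is vacuous. The inductive step reduces to a single \emph{localization property}: if $\rho \in NC(n)$ has $[i,j]$ as a union of blocks and $i \leq \ell \leq j-1$, then every skein basis element appearing in $\tau_\ell(\rho)$ again has $[i,j]$ as a union of blocks, agrees with $\rho$ on the blocks meeting the complementary arc $\{1, \dots, i-1\} \cup \{j+1, \dots, n\}$, and satisfies $\overline{\tau_\ell(\rho)} = \tau_{\ell - i + 1}(\bar\rho)$ in $V(j-i+1)$, where the bar denotes restriction to $[i,j]$ followed by decrementing indices. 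Granting this, one applies $\tau_{\ell_1}$ termwise (using its linearity) to the linear combination $s_{\ell_2} \cdots s_{\ell_r}.\pi$ supplied by the inductive hypothesis, and the three conclusions for $w$ follow by bookkeeping, using that $\overline{s_\ell} = s_{\ell - i + 1}$ identifies the relevant generators of $\symm_{j-i+1}$.

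For the first two parts of the localization property I would invoke the definitions of $\tau_\ell$ and of the skein map $\sigma$ (Definition~\ref{skein-definition}): in every term of $\tau_\ell(\rho)$, all blocks of $\rho$ other than $B_\ell(\rho)$ and $B_{\ell+1}(\rho)$ are carried along unchanged, while $B_\ell(\rho) \cup B_{\ell+1}(\rho)$ is re-partitioned into at most two new blocks built from $\{\ell\}$, $\{\ell+1\}$, and the remaining elements of these two blocks. Since $i \leq \ell < \ell + 1 \leq j$ and $[i,j]$ is a union of blocks of $\rho$, we have $B_\ell(\rho), B_{\ell+1}(\rho) \subseteq [i,j]$; hence every new block is again a subset of $[i,j]$, the blocks meeting the complementary arc are untouched, and $[i,j]$ remains a union of blocks in every term.

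The substantive part is the identity $\overline{\tau_\ell(\rho)} = \tau_{\ell-i+1}(\bar\rho)$, and here the hard part will be checking that the combinatorics governing $\tau_\ell$ --- which case of its defining trichotomy applies (is $s_\ell(\rho)$ noncrossing, are $\ell, \ell+1$ blockmates, is $s_\ell(\rho)$ almost noncrossing), and in the almost-noncrossing case which flavor $\sigma_1$--$\sigma_4$ fires and what its output is --- depends only on $\bar\rho$ and not on the ambient $[n]$. The key point I would use is that, because $[i,j]$ is a union of blocks, no block of $\rho$ (nor of $s_\ell(\rho)$, which differs from $\rho$ only in blocks inside $[i,j]$) straddles the complementary arc; hence any quadruple $a < b < c < d$ witnessing a crossing lies entirely in $[i,j]$, so $\rho$ (resp.\ $s_\ell(\rho)$) is noncrossing exactly when $\bar\rho$ (resp.\ $s_{\ell-i+1}(\bar\rho)$) is, and almost-noncrossingness together with the crossing index transport correctly under restriction-and-shift. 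Using the well-definedness of $\sigma$ from Lemma~\ref{sigma-restricts} to compute via the crossing at $\ell$ (matching the crossing at $\ell - i + 1$ used for $\overline{s_\ell(\rho)}$), a term-by-term comparison of the re-partitions of $B_\ell(\rho) \cup B_{\ell+1}(\rho)$ with those of $B_{\ell-i+1}(\bar\rho) \cup B_{\ell-i+2}(\bar\rho)$ yields the identity. Everything else is the routine propagation of this local statement along a word, so I expect this restriction-compatibility of the skein machinery to be essentially the only content of the proof.
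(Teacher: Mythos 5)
Your argument is correct, and it takes the same route the paper does: the paper gives no formal proof of Lemma~\ref{local-property}, asserting it is immediate from the combinatorial definition of the action (each $\tau_\ell$ only modifies the blocks $B_\ell$ and $B_{\ell+1}$, which lie inside $[i,j]$ when $[i,j]$ is a union of blocks), and your induction along a word in $s_i,\dots,s_{j-1}$ is exactly a careful elaboration of that observation. The one substantive point you supply — that a block contained in the interval $[i,j]$ cannot cross a block disjoint from $[i,j]$, so the noncrossing/almost-noncrossing trichotomy and the choice of skein relation for $\tau_\ell$ depend only on $\bar\rho$ — is precisely the detail the paper leaves implicit, and you have it right.
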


In this section and the next, we will analyze in detail the action of $\symm_n$ on two particular skein basis elements
$\pi_0, \pi_1 \in NC(n, k, 0)$ given by
$\pi_0 := \{ \{1, 2k, 2k+1, \dots, n-1, n\}, \{2, 2k-1\}, \dots, \{k, k+1\} \}$ and
$\pi_1 := \{ \{1, 2, \dots, n-2k+2\}, \{n-2k+3, n-2k+4\}, \dots, \{n-1, n\} \}$.
These basis elements are shown in Figure~\ref{figure-pi}.

\begin{figure}
\centering
\includegraphics[scale = 0.5]{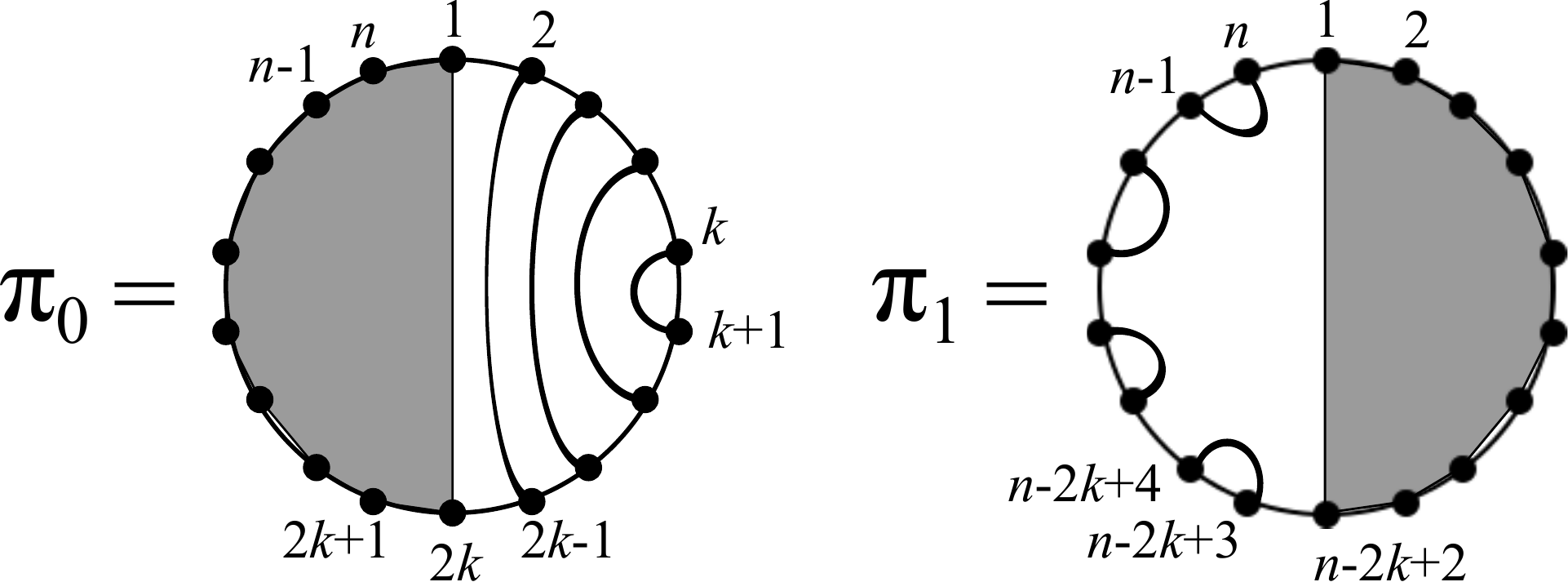}
\caption{The skein basis elements $\pi_0$ and $\pi_1$.}
\label{figure-pi}
\end{figure}

We are ready to prove that the module $V(n, k, 0)$ is isomorphic to the flag-shaped irreducible
$S^{(k, k, 1^{n-2k})}$.  The key tool is Lemma~\ref{pincer-lemma}.

\begin{proposition}
\label{isomorphism-special-case}
We have that $V(n, k, 0) \cong_{\symm_n} S^{(k, k, 1^{n-2k})}$.
\end{proposition}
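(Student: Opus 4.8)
The plan is to apply the ``pincer'' criterion of Lemma~\ref{pincer-lemma}: to show that a $\symm_n$-module $W$ with $\dim W = \dim S^{\lambda}$ for $\lambda = (k,k,1^{n-2k})$ is isomorphic to $S^{\lambda}$, it suffices to show that $[\symm_{\lambda}]_+$ kills $W$ and that $[\symm_{\lambda'}]_-$ kills $W$, where $\lambda' = (n-k, 2^{k-1})$ is the conjugate. Indeed, by Lemma~\ref{pincer-lemma} the only irreducible $S^{\mu}$ that survives both operators is the one with $\lambda \preceq \mu \preceq \lambda$, i.e.\ $\mu = \lambda$; combined with the dimension count this forces $W \cong S^{\lambda}$. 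The dimension $\dim V(n,k,0) = |NC(n,k,0)| = \mathrm{Nar}(n,k)$ minus singleton corrections --- more precisely $|NC(n,k,0)|$ --- should be checked to equal $f^{\lambda}$ via the hook length formula (or cited from the introduction, where $X(1)$ in Theorem~\ref{no-singletons-csp} is identified with the $q$-hook length formula for $\lambda = (k,k,1^{n-2k})$ at $q=1$).

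First I would establish the two annihilation statements using the local structure lemmas. For $[\symm_{\lambda'}]_- = [\symm_{\{1,\dots,n-k\}}]_- \cdot [\symm_{\{n-k+1,n-k+2\}}]_- \cdots [\symm_{\{n-1,n\}}]_-$, I would use the factorization $[\symm_{\lambda'}]_- = a_-(1-t)$ for a suitable transposition $t$, and show that for \emph{every} skein basis element $\pi \in NC(n,k,0)$ one can choose such a $t$ with $t.\pi = \pi$ (up to sign according to the $\tau$ rules). Concretely: since $\pi$ has $k$ blocks, no singletons, and $n$ vertices, at least one of the first $n-k$ vertices, say, must lie in a block with a blockmate among $\{1,\dots,n-k\}$ that is adjacent --- here one uses that $NC(n,k,0)$ partitions have average block size $n/k \geq 2$ and a pigeonhole/interval argument to locate $s_i$ with $i, i+1$ in the same block and $i \in \{1,\dots,n-k-1\}$ or one of the pairs $\{n-k+2j-1,n-k+2j\}$ forming a block. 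When $s_i$ merges blockmates, $\tau_i(\pi) = -\pi$, so $(1+s_i).\pi = 0$ in the appropriate twisted sense, which feeds into $[\symm_{\lambda'}]_-$ killing $\pi$. The analogous argument for $[\symm_{\lambda}]_+ = [\symm_{\{1,\dots,k\}}]_+[\symm_{\{k+1,\dots,2k\}}]_+[\symm_{\{2k+1\}}]_+\cdots$ requires showing, for each $\pi$, a transposition $t \in \symm_{\lambda}$ with $t.\pi = -\pi$ (so that $[\symm_{\lambda}]_+ = a_+(1+t)$ kills $\pi$); this is where the special basis elements $\pi_0, \pi_1$ of Figure~\ref{figure-pi} and Lemma~\ref{local-property} come in --- one reduces a general $\pi$ to these model configurations.

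The main obstacle I anticipate is the $[\symm_{\lambda}]_+$-annihilation: unlike merging blockmates (which trivially gives a sign $-1$), when $i,i+1$ are in \emph{different} blocks of $\pi$ the operator $\tau_i$ produces a genuine skein expansion (two, three, or four terms), and it is not at all obvious that $(1 + s_i + s_i s_{i+1} + \cdots).\pi$ --- the full symmetrizer over a size-$k$ interval --- vanishes. Handling this cleanly likely requires the local reduction (Lemma~\ref{local-property}) to the case $n = 2k$ or to a small model piece, plus an induction on $k$, and perhaps an explicit identification of where a blockmate-pair inevitably appears inside the interval $\{1,\dots,k\}$ or $\{k+1,\dots,2k\}$ once one quotients by the symmetrizer on the complementary interval. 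I would structure the proof so that the delicate symmetrizer computation is isolated in a sublemma about the action of $[\symm_{[1,k]}]_+[\symm_{[k+1,2k]}]_+$ on $V(2k,k,0)$, using the known $n=2k$ case (noncrossing perfect matchings, where $V(2k,k,0)$ is the Temperley--Lieb / KL cell module $S^{(k,k)}$) as the base, then bootstrap via induction and Lemma~\ref{local-property} to general flag shape, after which the dimension count and the pincer lemma finish the argument.
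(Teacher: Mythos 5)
Your reduction via Lemma~\ref{pincer-lemma} is inverted, and this sinks the plan. The lemma says $[\symm_{\lambda}]_+$ kills $S^{\mu}$ \emph{unless} $\lambda \preceq \mu$; in particular $[\symm_{\lambda}]_+$ does \emph{not} kill $S^{\lambda}$ itself (up to a scalar it projects $S^{\mu}$ onto its $\symm_{\lambda}$-invariants, whose dimension is the Kostka number $K_{\mu\lambda}$, and $K_{\lambda\lambda}=1$). So if you succeeded in showing that both $[\symm_{\lambda}]_+$ and $[\symm_{\lambda'}]_-$ annihilate $V(n,k,0)$, you would have proved that every irreducible constituent of $V(n,k,0)$ is dominance-incomparable to $\lambda$ --- in particular that $S^{\lambda}$ does \emph{not} occur, the opposite of the goal. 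The statement $[\symm_{\lambda}]_+.V(n,k,0)=0$ that you flag as the ``main obstacle'' is in fact false: since $V(n,k,0)\cong S^{\lambda}$ the symmetrizer cannot kill it, and the paper exhibits the element $\pi_0$ of Figure~\ref{figure-pi} with the coefficient of $\pi_0$ in $[\symm_{\lambda}]_+.\pi_0$ equal to $(k!)^2\neq 0$. No Temperley--Lieb base case or appeal to Lemma~\ref{local-property} will make that symmetrizer vanish.

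The correct one-sided pincer, which is what the paper runs, is: (i) show $[\symm_{\mu}]_+$ \emph{does} kill $V(n,k,0)$ for every $\mu\succ\lambda$, ruling out constituents $S^{\nu}$ with $\nu\succ\lambda$ --- here your observation that a blockmate pair $i,i+1$ inside a long enough interval forces $(1+s_i)$ to kill $\pi$ is exactly the right mechanism, the only delicate case being $\mu_1=\mu_2=k$, $\mu_3>1$, where the unique basis element not handled this way is $\pi_0$ and one uses $s_{2k}$; (ii) show $[\symm_{\lambda}]_+$ does \emph{not} kill $V(n,k,0)$, by checking that the $\QQ$-span of $NC(n,k,0)\setminus\{\pi_0\}$ is $\symm_{\lambda}$-stable so the coefficient of $\pi_0$ in $[\symm_{\lambda}]_+.\pi_0$ is $|\symm_{\lambda}|$. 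Together (i) and (ii) force $S^{\lambda}$ to occur as a constituent, and the dimension count $|NC(n,k,0)|=\dim S^{\lambda}$ (via the Pechenik and Stanley bijections, which you correctly invoke) finishes the argument. A smaller slip: the conjugate of $(k,k,1^{n-2k})$ is $(n-2k+2,2^{k-1})$, not $(n-k,2^{k-1})$.
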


\begin{proof}
Let $\lambda = (k, k, 1^{n-2k})$.  
Kathy O'Hara and Doron Zeilberger independently observed (unpublished) that the hook length formula for
$\mathrm{dim}(S^{\lambda})$ agrees with Cayley's product formula \cite{Cayley} counting the number 
of noncrossing dissections of a regular $(n-k+2)$-gon with $k-1$ diagonals.
Stanley \cite{Stanley} found a bijection between these dissections and standard Young tableaux of shape
$\lambda$.
Pechenik \cite[Section 2]{Pechenik} gave a bijection from these polygon dissections to the set of noncrossing partitions
$NC(n, k, 0)$.  We therefore have that $\mathrm{dim}(V(n, k, 0)) = \mathrm{dim}(S^{\lambda})$, so that 
the dimensions of the modules on either side of the alleged isomorphism agree.  
By Lemma~\ref{pincer-lemma},
it is enough to show that 
$[\symm_{\lambda}]_+$ does not kill $V(n, k, 0)$, but $[\symm_{\mu}]_+$ does kill $V(n, k, 0)$ for all partitions
$\mu \succ \lambda$.

To see that $[\symm_{\lambda}]_+$ does not kill $V(n, k, 0)$, it suffices to exhibit a  skein basis element
$\pi \in NC(n, k, 0)$ such that $[\symm_{\lambda}]_+.\pi \neq 0$.  Let 
$\pi_0$ be defined as in Figure~\ref{figure-pi}.  We show that
$[\symm_{\lambda}]_+.\pi_0 \neq 0$ 
by showing
 that the coefficient of $\pi_0$ in the skein basis expansion of $[\symm_{\lambda}]_+.\pi_0$ is nonzero.

To begin, observe that $\symm_{\lambda} = \symm_k \times \symm_k$ preserves the letters $\{1, 2, \dots, k \}$
as well as the letters $\{k+1, k+2, \dots, 2k\}$.  The restriction of the set partition $\pi$ to either of these two
letter sets consists entirely of singleton blocks. 

For any $1 \leq i \leq k-1$ or
$k+1 \leq i \leq 2k-1$, we claim that the adjacent transposition
$s_i$ preserves the subspace of $V(n, k, 0)$ spanned by
$NC(n, k, 0) - \{\pi_0\}$.  
To see this, let $\pi \in NC(n, k, 0) - \{\pi_0\}$ and consider a typical skein basis element $\pi'$
appearing in $s_i.\pi$.  
Figure~\ref{SkeinRelationsFigure} shows that either $\pi = \pi'$ or $i$ and $i+1$ are in the same block of 
$\pi'$.
Since $i$ and $i+1$ are not in the same block of $\pi_0$, this means that $\pi' \neq \pi_0$.
Therefore, the $\QQ$-span of
$NC(n, k, 0) - \{\pi_0\}$ is closed under the action of $s_i$.
By our condition on $i$,
it follows that the $\QQ$-span of
$NC(n, k, 0) - \{\pi_0\}$ is closed under the action of the Young subgroup $\symm_{\lambda}$.

For any $1 \leq i \leq k-1$ or
$k+1 \leq i \leq 2k-1$,
we have that the coefficient of $\pi_0$ in $s_i.\pi_0$ is $1$.  
Since the $\QQ$-span of
$NC(n, k, 0) - \{\pi_0\}$ is closed under the action of $\symm_{\lambda}$,
it follows
that the coefficient of $\pi_0$ in $[\symm_{\lambda}]_+.\pi_0$ is 
$|\symm_{\lambda}| = |\symm_k \times \symm_k| = (k!)^2$. Since this coefficient is nonzero, we have 
$[\symm_{\lambda}]_+.\pi_0 \neq 0$, as desired.  

On the other hand, let $\mu \vdash n$ be a partition satisfying $\mu \succ \lambda$.  We want to show
that $[\symm_{\mu}]_+$ kills $V(n, k, 0)$.  It is enough to show that 
$[\symm_{\mu}]_+.\pi = 0$ for all $\pi \in NC(n, k, 0)$.

Let $\pi \in NC(n, k, 0)$ and
write $\mu = (\mu_1, \dots, \mu_r)$.  If $\mu_1 > k$, then the fact that $\pi$ is noncrossing,
has $k$ blocks, and has no singleton blocks implies that there is an index $1 \leq i \leq \mu_1$ such 
that $i$ and $i+1$ are in the same block of $\pi$.  This means that $s_i.\pi = -\pi$, so that 
the action of $1 + s_i$ kills $\pi$.
Moreover, we have that $s_i \in \symm_{\mu}$.  
Since the group algebra element $[\symm_{\mu}]_+$ factors as 
$[\symm_{\mu}]_+ = a (1+s_i)$ for some $a \in \QQ \symm_n$, we conclude that $[\symm_{\mu}]_+.\pi = a(1+s_i).\pi = a.0 =  0$,
as desired.

By the last paragraph, we may assume that $\mu_1 \leq k$.
Since $\mu \succ \lambda$,
this implies that $\mu_1 = \mu_2 =  k$.  The condition $\mu \succ \lambda$ forces $\mu_3 > 1$.

If any two indices in the 
set $\{1, 2, \dots, k\}$ are in the same block of $\pi$,  
we get that
$[\symm_{\mu}]_+.\pi = 0$ as before.
We may therefore
assume that the indices in $\{1, 2, \dots, k\}$ are all in distinct blocks of $\pi$.  Similarly, we may
assume that the indices in $\{k+1, k+2, \dots, 2k\}$ are all in distinct blocks of $\pi$.

Since $\pi$
has $k$ blocks, no singleton blocks, and is noncrossing, the assumptions of the last paragraph force the blocks of 
$\pi$ to be
$\{k, k+1\}, \{k-1, k+2\}, \dots, \{2, 2k-1\},$ and $\{1, 2k, 2k+1, 2k+1, \dots, n\}$.  
In other words, we have that $\pi = \pi_0$, the skein basis element of Figure~\ref{figure-pi}.
In particular,
we have that
$2k$ and $2k+1$ are in the same block of $\pi$.  Since $\mu_3 > 1$, we have that 
$s_{2k} \in \symm_{\mu}$ and there is a group algebra element
$a \in \QQ \symm_n$ such that 
$[\symm_{\mu}]_+ = a (1+s_{2k})$.  We have that
$[\symm_{\mu}]_+.\pi = a (1+s_{2k}).\pi = a.0 = 0$.
\end{proof}

With Proposition~\ref{isomorphism-special-case} in hand, it is a simple matter to determine the isomorphism
types of $V(n), V(n, k),$ and $V(n, k, s)$.

\begin{theorem}  
\label{isomorphism}
We have the following isomorphisms of 
$\symm_n$-modules.
\begin{align}
V(n) &\cong_{\symm_n} \bigoplus_{k \geq 0} V(n, k), \\
V(n, k) &\cong_{\symm_n} \bigoplus_{s \geq 0} V(n, k, s), \\
V(n, k ,s) &\cong_{\symm_n}  
\Ind_{\symm_{n-s} \times \symm_s}^{\symm_n}(S^{(k-s, k-s, 1^{n-2k+s})} \otimes \mathrm{triv}_{\symm_s}).
\end{align}
\end{theorem}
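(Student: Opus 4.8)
The first two isomorphisms require no work: the basis decompositions $NC(n)=\biguplus_k NC(n,k)$ and $NC(n,k)=\biguplus_s NC(n,k,s)$ are $\symm_n$-stable by Theorem~\ref{module-theorem}, so they induce the stated direct-sum decompositions of $\symm_n$-modules. The plan for the third isomorphism is to realize $V(n,k,s)$ as a module induced from a Young subgroup, using a decomposition of $V(n,k,s)$ indexed by the location of the singleton blocks.

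For $\pi\in NC(n,k,s)$ write $T(\pi)\subseteq[n]$ for its set of singleton elements, so $|T(\pi)|=s$, and for each $s$-element subset $T\subseteq[n]$ let $V_T\subseteq V(n,k,s)$ be the span of $\{\pi\in NC(n,k,s):T(\pi)=T\}$, giving a vector-space decomposition $V(n,k,s)=\bigoplus_T V_T$. The first step is to show that the skein action respects this decomposition, in the sense that $\tau_i(V_T)=V_{s_i(T)}$ for every $i$ and every $T$. I would check the inclusion $\tau_i(V_T)\subseteq V_{s_i(T)}$ directly from the definition of $\tau_i$: if $B_i(\pi)=B_{i+1}(\pi)$ then $\tau_i(\pi)=-\pi$ and $s_i$ fixes $T(\pi)$; if $s_i(\pi)$ is noncrossing then $\tau_i(\pi)=s_i(\pi)$ and $T(s_i(\pi))=s_i(T(\pi))$; and if $s_i(\pi)$ is almost noncrossing then $i,i+1$ are non-singletons of both $\pi$ and $s_i(\pi)$, and one reads off from Definition~\ref{skein-definition} (equivalently Figure~\ref{SkeinRelationsFigure}) that every term of $\sigma(s_i(\pi))$ has singleton set $T(\pi)=s_i(T(\pi))$ --- the only candidate new singleton blocks are $\{a_1,\dots,a_k\}$ and $\{b_1,\dots,b_\ell\}$, which occur only when $k\geq2$, resp.\ $\ell\geq2$. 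Applying $\tau_i$ twice and using $\tau_i^2=1$ promotes this inclusion to the equality $\tau_i(V_T)=V_{s_i(T)}$, whence $w.V_T=V_{w(T)}$ for all $w\in\symm_n$.

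Next I would fix $T_0:=\{n-s+1,\dots,n\}$, observe that $\symm_n$ acts transitively on $s$-subsets of $[n]$ with $\Stab_{\symm_n}(T_0)=\symm_{\{1,\dots,n-s\}}\times\symm_{\{n-s+1,\dots,n\}}\cong\symm_{n-s}\times\symm_s$, and conclude from the previous paragraph that $V_{T_0}$ is a $(\symm_{n-s}\times\symm_s)$-submodule and that the natural map $\Ind_{\symm_{n-s}\times\symm_s}^{\symm_n}V_{T_0}\to V(n,k,s)$ sending $w\otimes v$ to $w.v$ is an isomorphism of $\symm_n$-modules --- it is well defined and surjective because $w.V_{T_0}=V_{w(T_0)}$, and it is a bijection by comparing dimensions, since all $V_T$ have the same dimension as $V_{T_0}$. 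It then remains to identify $V_{T_0}$. If $T(\pi)=T_0$, then every non-singleton block of $\pi$ is contained in $\{1,\dots,n-s\}$, because its elements are not singletons and hence lie below $n-s+1$; thus $\{1,\dots,n-s\}$ is a union of blocks of $\pi$, and restriction to this interval gives a bijection $\{\pi\in NC(n,k,s):T(\pi)=T_0\}\to NC(n-s,k-s,0)$. The generators $s_{n-s+1},\dots,s_{n-1}$ of $\symm_{\{n-s+1,\dots,n\}}$ merely transpose singletons of $\pi$, so act as the identity on $V_{T_0}$, accounting for the $\mathrm{triv}_{\symm_s}$ factor; and by Lemma~\ref{local-property} the restriction bijection intertwines the $\symm_{\{1,\dots,n-s\}}\cong\symm_{n-s}$ action on $V_{T_0}$ with the action on $V(n-s,k-s,0)$. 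Proposition~\ref{isomorphism-special-case}, applied with $n-s$ and $k-s$ in place of $n$ and $k$, then identifies $V_{T_0}$ with $S^{(k-s,k-s,1^{n-2k+s})}\otimes\mathrm{triv}_{\symm_s}$ as a $(\symm_{n-s}\times\symm_s)$-module, completing the argument.

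I expect the main obstacle to be the first step, the identity $\tau_i(V_T)=V_{s_i(T)}$: this is the one point where the fine structure of the skein relations $\sigma_1$--$\sigma_4$ must be inspected rather than quoted, and the production of spurious singleton blocks in $\sigma_2,\sigma_3$ --- precisely the phenomenon that the ``degenerate'' clauses of Definition~\ref{skein-definition} are designed to suppress --- must be tracked carefully. Everything afterward is the standard recognition of an induced module, together with the bookkeeping furnished by Lemma~\ref{local-property} and Proposition~\ref{isomorphism-special-case}.
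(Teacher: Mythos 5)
Your proposal is correct and follows essentially the same route as the paper: the paper also encodes $\pi\in NC(n,k,s)$ as a pair $(T,\pi')$ with $T$ the singleton set, decomposes $V(n,k,s)=\bigoplus_T V(n,k,T)$, records that $s_i$ sends $(T,\pi')$ to $(s_i.T,\pi')$ or $(T,s_j.\pi')$ according to whether $i$ or $i+1$ lies in $T$, and then invokes the definition of induced representations together with Proposition~\ref{isomorphism-special-case} for the $s=0$ case. Your write-up merely spells out the details (the check $\tau_i(V_T)=V_{s_i(T)}$, the choice $T_0=\{n-s+1,\dots,n\}$, and the use of Lemma~\ref{local-property}) that the paper leaves implicit.
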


\begin{proof}
The first and second identifications are trivial.
Proposition~\ref{isomorphism-special-case} gives the third identification
when $s = 0$.  

To prove the third isomorphism in general, observe that we can think of a noncrossing partition 
$\pi \in NC(n, k, s)$ as an ordered pair $(T, \pi')$, where $T$ is an $s$-element subset of $[n]$ and
$\pi' $ is the set partition in $NC(n-s, k-s, 0)$  obtained by removing the singleton
blocks of $\pi$ and decrementing indices.  We get a corresponding
vector space decomposition $V(n, k, s) = \bigoplus_{T \in {[n] \choose s}} V(n, k, T)$,
where $V(n, k, T)$ is the subspace of $V(n, k, s)$ with basis given by those set partitions
$\pi \in NC(n, k, s)$ whose corresponding pair is of the form $(T, \pi')$.  For $1 \leq i \leq n-1$, the adjacent transposition
$s_i$ acts on pairs by
\begin{equation*}
s_i: (T, \pi') \mapsto \begin{cases}
(s_i.T, \pi') & \text{if at least one of $i$ or $i+1$ is in $T$,} \\
(T, s_j.\pi') & \text{if $i, i+1 \notin T$,}
\end{cases}
\end{equation*}
where $i$ is the $j^{th}$-smallest element in $[n] - T$.
The third isomorphism follows from the definition of induced representations.
\end{proof}

\begin{example}
Let us compute the isomorphism type of the $\symm_6$-module $V(6, 3)$.  Since a set partition of $[6]$ with $3$ blocks can have at most
$2$ singleton blocks, we have 
\begin{equation*}
V(6, 3) = V(6, 3, 0) \oplus V(6, 3, 1) \oplus V(6, 3, 2).
\end{equation*}
The module $V(6, 3, 0)$ is isomorphic to the irreducible $S^{(3,3)}$.  Applying Monk's Rule, the module 
$V(6, 3, 1)$ is isomorphic to
\begin{equation*}
V(6, 3, 1) = \Ind_{\symm_5 \times \symm_1}^{\symm_6}(S^{(2,2,1)} \otimes S^{(1)}) = S^{(2,2,1,1)} \oplus S^{(2,2,2)} \oplus S^{(3,2,1)}.
\end{equation*}
Applying the Pieri Rule, the module $V(6, 3, 2)$ is isomorphic to 
\begin{equation*}
V(6, 3, 2) = \Ind_{\symm_4 \times \symm_2}^{\symm_6}(S^{(1,1,1,1)} \otimes S^{(2)}) = S^{(2,1,1,1,1)} \oplus S^{(3,1,1,1)}.
\end{equation*}
\end{example}

In light of Proposition~\ref{isomorphism-special-case}, the third isomorphism in 
Theorem~\ref{isomorphism} could also be written as
\begin{equation}
\label{isomorphism-consequence}
V(n, k ,s) \cong_{\symm_n}  
\Ind_{\symm_{n-s} \times \symm_r}^{\symm_n}(V(n-s, k-s, 0) \otimes \mathrm{triv}_{\symm_s}).
\end{equation}
In other words, the representations $V(n), V(n, k),$ and $V(n, k, s)$ are built out of copies
of representations of the form $V(n, k, 0)$.
The fact that the $V(n, k, 0)$ are irreducible of flag shape will play a key role in the next section.

When $n = 2k$, the skein  module $V(2k, k) = V(2k, k, 0)$ coincides with a representation of a Temperley-Lieb algebra studied
in \cite{PPR}.
Given a parameter $x$, the {\em Temperley-Lieb algebra} $TL_n(x)$ 
has $\QQ$-basis given by the collection of noncrossing perfect matchings  on the set $[2n]$.  
In drawing these matchings, the elements of $[2n]$ are arranged in two columns.  For example, 
a $\QQ$-basis of $TL_3(x)$ is shown below.

\begin{center}
\includegraphics[scale = 0.3]{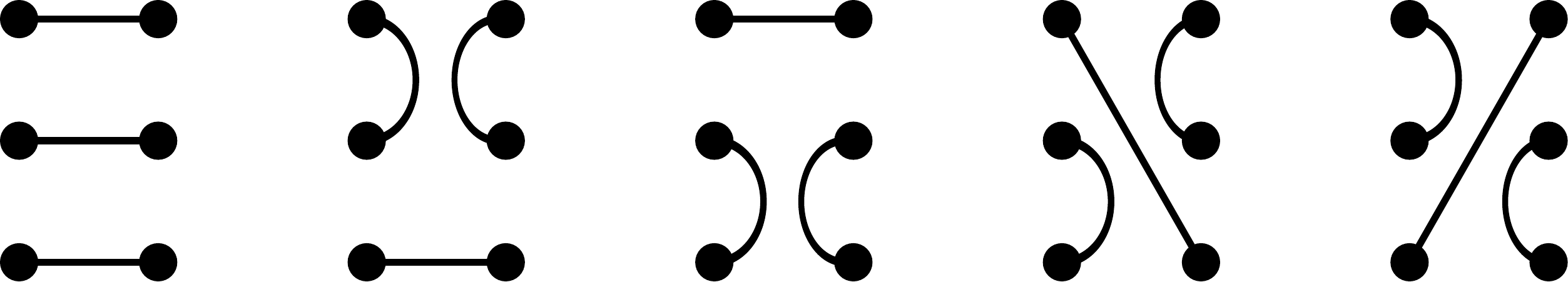}
\end{center}

Multiplication in the Temperley-Lieb algebra $TL_n(x)$ is defined on basis elements by concatenation, followed by 
replacing any loops with factors of the parameter $x$. 
An example of multiplication within $TL_6(x)$ is shown below. 
\begin{center}
\includegraphics[scale = 0.2]{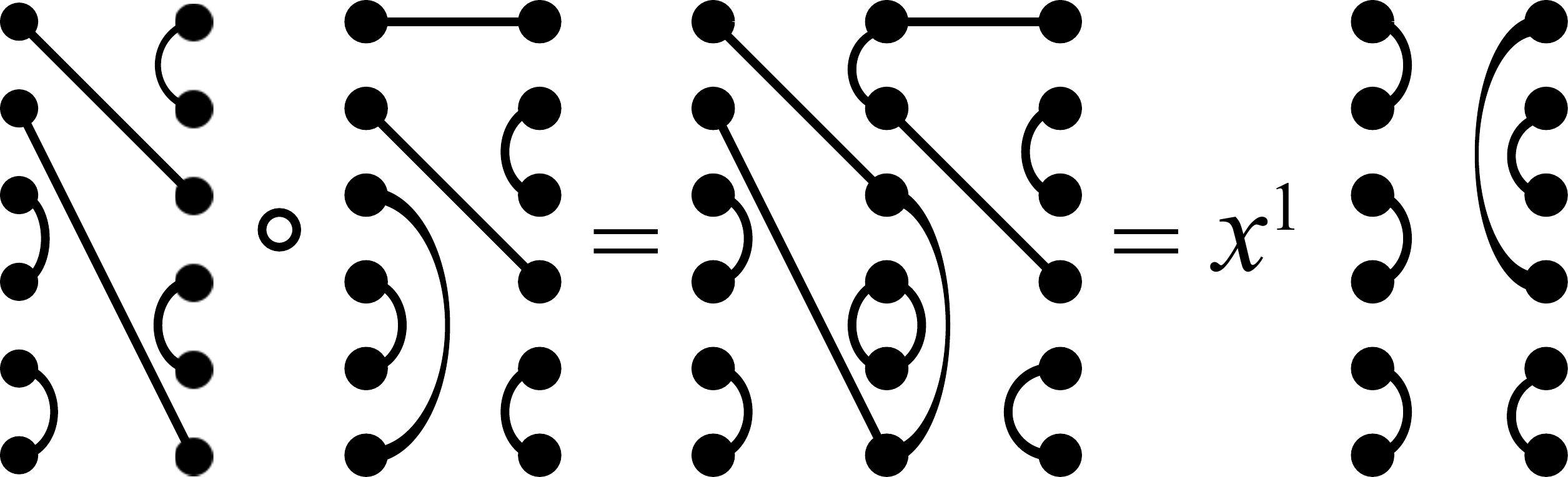}
\end{center}
This multiplication rule gives $TL_n(x)$ the structure of a unital associative $\QQ$-algebra.
The algebra $TL_n(x)$ has generators $t_1, t_2, \dots, t_{n-1}$ shown below.
\begin{center}
\includegraphics[scale = 0.2]{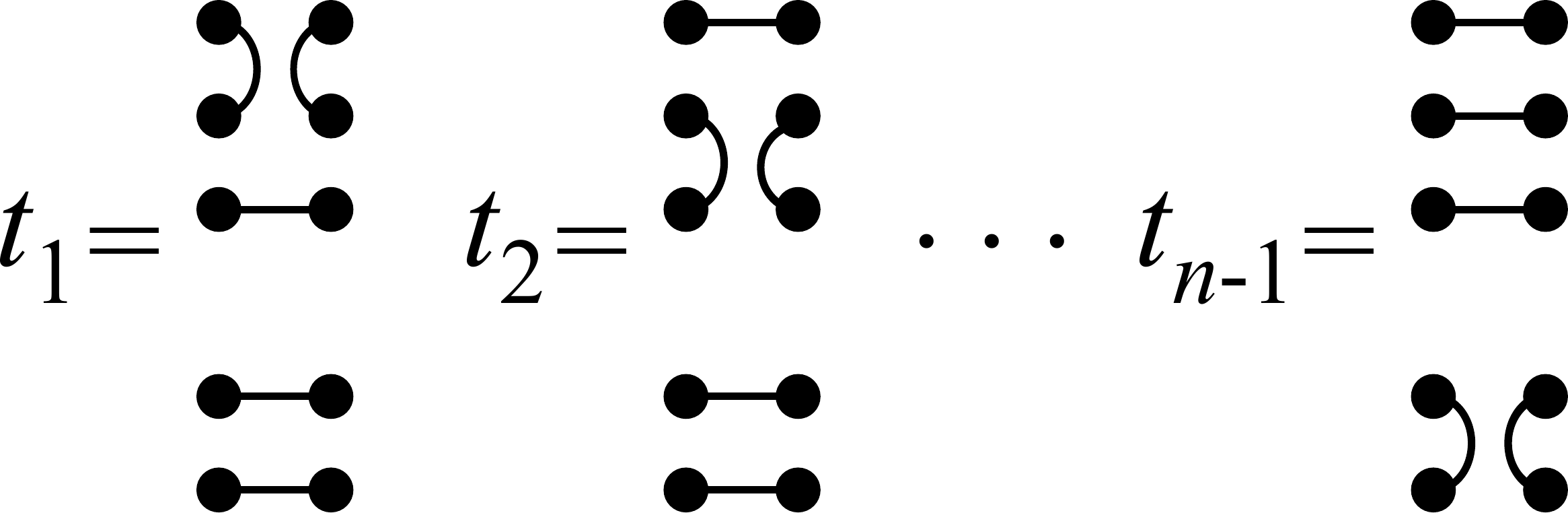}
\end{center}
With respect to this generating set, the algebra $TL_n(x)$ has presentation given by
\begin{equation}
\begin{cases}
t_i^2 = x t_i,  & \\
t_i t_j = t_j t_i & \text{for $|i - j| > 1$,} \\
t_i t_{i \pm 1} t_i = t_i.  &
\end{cases}
\end{equation}

At the parameter $x = -2$, the algebra $TL_n(-2)$ can be realized as a quotient of the symmetric group algebra $\QQ \symm_n$.
Indeed, for $n \geq 3$
the ring epimorphism $\varphi: \QQ \symm_n \rightarrow TL_n(-2)$ given by $\varphi: s_i \mapsto (1 + t_i)$ for $1 \leq i \leq n-1$
has kernel equal to the two-sided ideal $I$ in $\QQ \symm_n$ generated by the group algebra element
$[\symm_3]_- = e - s_1 - s_2 + s_1 s_2 + s_2 s_1 - s_1 s_2 s_1$.  
Precomposition with the map $\varphi$ gives any $TL_n(-2)$-module the structure of 
a $\QQ \symm_n$-module.

For $k \geq 0$, let $W(2k, k)$ denote the $\QQ$-vector space spanned by noncrossing perfect matchings on the set $[2k]$.
The Temperley-Lieb algebra $TL_{2k}(-2)$ acts on $W(2k, k)$ as follows.
Roughly speaking, the generator $t_i$ of $TL_{2k}(-2)$ attaches a  $) ($ figure between the vertices $i$ and $i+1$ of a noncrossing perfect matching
$\pi$ drawn on a disk.  If this creates a loop, this loop is traded for a factor of $-2$.
Two examples are shown below for the case $k = 4$.
\begin{center}
\includegraphics[scale = 0.4]{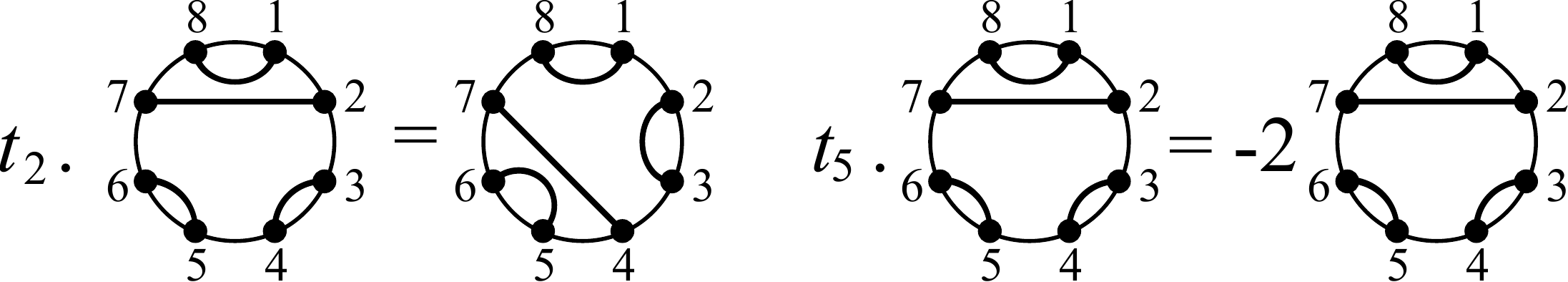}
\end{center}

More formally,
given $1 \leq i \leq 2k-1$ and a noncrossing matching $\pi \in W(2k, k)$, we define $t_i.\pi$ to be
\begin{equation}
t_i.\pi := \begin{cases}
-2 \pi & \text{if $B_{\pi}(i) = B_{\pi}(i+1),$} \\
\pi' & \text{if $B_{\pi}(i) \neq B_{\pi}(i+1),$}
\end{cases}
\end{equation}
where $\pi'$ is the noncrossing perfect matching obtained from $\pi$ by replacing 
the blocks $\{i, j\}$ and $\{i+1, \ell\}$ with $\{i, i+1\}$ and $\{j, \ell\}$.
Using the map $\varphi$, we get the $\symm_{2k}$-module structure on
$W(2k, k)$ studied in \cite{PPR}. 

 It follows from the definitions that the identity map of vector spaces
gives an isomorphism of $\symm_{2k}$-modules
$V(2k, k) \cong_{\symm_{2k}} W(2k, k)$ which identifies the skein basis of
$V(2k, k)$ with the noncrossing perfect matching basis of $W(2k, k)$.
The noncrossing perfect matching basis of $W(2k, k)$ can also be identified with the 
KL cellular basis of the $\symm_{2k}$-irreducible $S^{(k, k)}$.
The identification of $W(2k, k)$ with the irreducible $S^{(k, k)}$ is a special case of 
Theorem~\ref{isomorphism}.

Temperley-Lieb algebras can be used to give an action of $\QQ \symm_n$ on each of the spaces
$V(n, k, s)$ in a way that generalizes the case of $V(2k, k, 0)$.  However, the module structure so attained differs from the 
skein module.

To distinguish our modules, we let $W(n, k, s)$ denote another copy of the $\QQ$-vector space $V(n, k, s)$ spanned by noncrossing 
partitions of $[n]$ with $k$ total blocks and $s$ singleton blocks.  We start by defining an action of $TL_n(-2)$ on $W(n, k, 0)$
(i.e., the case of no singleton blocks).  For $1 \leq i \leq n-1$ and $\pi \in NC(n, k, 0)$, the generator $t_i \in TL_n(2)$
acts by attaching a $) ($ figure at the vertices $i$ and $i+1$ in $\pi$ and replacing any resulting loop with a factor of $-2$.
An example is shown below for $(n, k, r) = (7, 3, 1)$.

\begin{center}
\includegraphics[scale = 0.2]{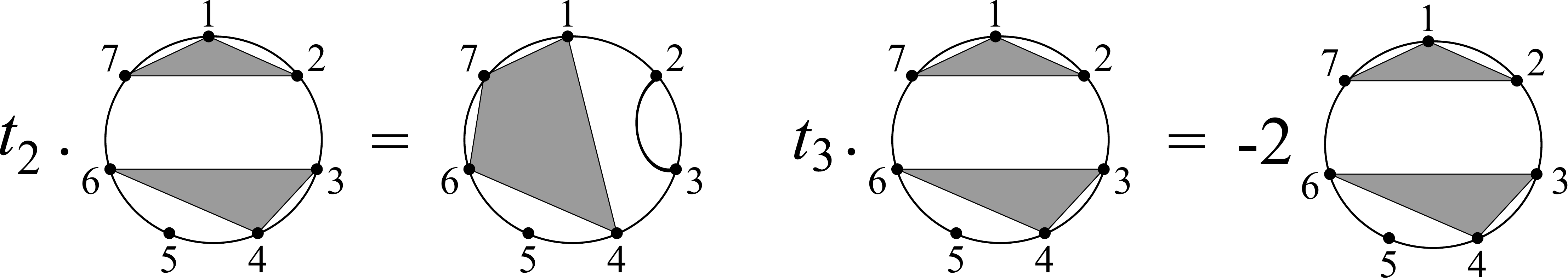}
\end{center}

More formally, we set 
\begin{equation}
t_i.\pi := \begin{cases}
-2 \pi & \text{if $B_{\pi}(i) = B_{\pi}(i+1),$} \\
\pi' & \text{if $B_{\pi}(i) \neq B_{\pi}(i+1),$}
\end{cases}
\end{equation}
where
$\pi'$ is obtained from $\pi$ by replacing the blocks
$\{i, a_1, \dots, a_k\}$ and $\{i+1, b_1, \dots, b_{\ell}\}$ with the blocks
$\{i, i+1\}$  and $\{a_1, \dots, a_k, b_1, \dots, b_{\ell}\}$.
The map $\varphi: \QQ \symm_n \rightarrow TL_n(-2)$ gives $W(n, k, 0)$ the structure of a 
$\symm_n$-module.  
For the general case of $s$ singleton blocks, we define a $\symm_n$-module structure on $W(n, k, s)$ by
$W(n, k, s) := \mathrm{Ind}_{\symm_{n-s} \times \symm_s}^{\symm_n}(W(n-s, k-s, 0) \otimes \mathrm{triv}_{\symm_s})$.

The $\symm_n$-modules $V(n, k, s)$ and $W(n, k, s)$ are {\em not} in general isomorphic.  
There are two reasons for this: one combinatorial and one algebraic.

From a combinatorial point of view, the action of the generators $t_i \in TL_n(-2)$ 
can only add doubleton blocks
to noncrossing partitions.  This means that the subspace of $W(6, 2, 0)$ spanned
by noncrossing partitions with at least one doubleton block
is a (nontrivial) $\symm_6$-submodule, so that $W(6, 2, 0)$ is not irreducible.

From an algebraic point of view, Theorem~\ref{isomorphism} asserts that 
$V(n, k, 0)$ is the flag shaped irreducible $S^{(k, k, 1^{n-2k})}$.  On the other hand,
since the $\symm_n$-structure on $W(n, k, 0)$ factors through the map
$\varphi: \QQ \symm_n \rightarrow TL_n(-2)$, it follows that 
$W(n, k, 0)$ is a direct sum of $\symm_n$-irreducibles corresponding to partitions $\lambda \vdash n$ with at most 
two rows.  In particular, unless $n = 2k$, the flag shaped irreducible corresponding to 
$\lambda = (k, k, 1^{n-2k})$ cannot appear as a component of $W(n, k, 0)$.

\begin{problem}
\label{temperley-lieb-problem}
Determine the isomorphism type of the $\symm_n$-module $W(n, k, s)$.
\end{problem}

As a first step towards solving Problem~\ref{temperley-lieb-problem}, observe that the action of 
a generator $t_i \in TL_n(-2)$ on a basis element $\pi \in W(n, k, 0)$ either leaves the number of doubleton blocks in
$\pi$ unchanged or increases the number of doubleton blocks in $\pi$ by one.  This implies that the 
$\symm_n$-module $W(n, k, s)$ has a $\symm_n$-stable filtration
\begin{equation}
W\left(n, k, s, \geq \left\lfloor \frac{n}{2} \right\rfloor \right) \subseteq  \cdots \subseteq W(n, k, s, \geq 1) \subseteq W(n, k, s, \geq 0) = W(n, k, s),
\end{equation}
where $W(n, k, r, \geq d)$ denotes the subspace of $W(n, k, s)$ spanned by noncrossing partitions of $[n]$ with
$k$ total blocks, $s$ singleton blocks, and at least $d$ doubleton blocks.
If we could determine the isomorphism type of the quotient modules 
$W(n, k, s, \geq d) / W(n, k, s, \geq d+1)$, we could solve
Problem~\ref{temperley-lieb-problem}.

Temperley-Lieb algebras arise naturally as {\it centralizer algebras} for the diagonal action of $GL_2(\CC)$ on 
the tensor space $(\CC^2)^{\otimes n}$ (where $\CC^2$ carries the defining representation).
It may be interesting to give the space $V(n, k, s)$ the structure of an associative algebra by identifying
it with a centralizer algebra for some group action.

While they have a nice combinatorial definition, the role played by the modules
$W(n, k, s)$ in cyclic sieving is unclear.
Part of the reason for this is that no permutation $w \in \symm_n$ can model the action of rotation
or reflection
on these modules in general (indeed, this is already impossible when $(n, k, s) = (6, 2, 0)$).
We will focus on the skein modules $V(n, k, s)$ for the remainder of the paper.

\section{The long element and the long cycle}
\label{Long}

In the last section, we determined the isomorphism types of the $\symm_n$-modules 
$V(n), V(n, k),$ and $V(n, k, s)$.  In this section, we will show that the representing matrices
for certain permutations $w \in \symm_n$, with respect to the skein bases of these 
modules, are (up to sign) permutation matrices for natural symmetries of noncrossing
partitions.  
By the remarks following Theorem~\ref{isomorphism}, it is enough to consider the action of $\symm_n$
on the module $V(n, k, 0)$.
We will focus on the cases of the {\it long cycle} $c = (1, 2, \dots, n) \in \symm_n$
and the {\it long element} $w_0 \in \symm_n$ whose one-line notation is given by
$w_0 = n(n-1) \dots 21$.

The action of the long element $w_0 \in \symm_n$ is easier to prove and we examine it first.

\begin{proposition}
\label{long-element-action}
Let $n, k > 0$ and consider the representation $V(n, k, 0)$ with its skein 
basis $NC(n, k, 0)$.  Let $w_0 \in \symm_n$ denote the long element.

For any noncrossing partition $\pi \in NC(n, k, 0)$, we have that 
\begin{equation}
w_0.\pi = (-1)^{\lfloor \frac{n}{2} \rfloor}t(\pi),
\end{equation}
where $t(\pi) \in NC(n, k, 0)$ is the set partition obtained by performing the reflection of the disc
which permutes vertices according to $w_0$.
\end{proposition}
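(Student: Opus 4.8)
The plan is to identify the action of $w_0$ on $V(n,k,0)$ with $(-1)^{\lfloor n/2\rfloor}$ times the linear extension $t\colon V(n,k,0)\to V(n,k,0)$ of the reflection map on $NC(n,k,0)$, exploiting the irreducibility of $V(n,k,0)$ (Proposition~\ref{isomorphism-special-case}) through Schur's lemma. Here $t(\pi)$ is the raw permutation action of $w_0$ on set partitions; it lands back in $NC(n,k,0)$ because a reflection of the disk preserves the noncrossing property as well as the numbers of blocks and the block sizes.

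First I would record the two compatibilities that make the comparison work. On one hand, $w_0 s_i w_0=s_{n-i}$ in $\symm_n$, immediately from $w_0(j)=n+1-j$. On the other hand, the reflection $t$ intertwines the local rule defining $\tau_i$ with the one defining $\tau_{n-i}$: one checks $t(\tau_i(\pi))=\tau_{n-i}(t(\pi))$ for every $\pi\in NC(n,k,0)$ and every $i$. This unwinds into the observations that reflecting the disk carries the side conditions in the definition of $\tau_i$ (``$s_i(\pi)$ noncrossing'', ``$B_i(\pi)=B_{i+1}(\pi)$'', ``$s_i(\pi)$ almost noncrossing, crossing at $i$'') to the corresponding conditions at $n-i$, and that the skein relations $\sigma_1$--$\sigma_4$ of Figure~\ref{SkeinRelationsFigure} are reflection-symmetric, with $\sigma_2$ and $\sigma_3$ exchanged and $\sigma_1,\sigma_4$ reflection-invariant; the degenerate cases of Definition~\ref{skein-definition} match up the same way (under $t$ the parameters $k$ and $\ell$ swap and the terms $\pi_3,\pi_4$ swap, with signs preserved). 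Granting this, let $\psi$ be $w_0$ acting on $V(n,k,0)$ followed by $t$. Then $\psi(s_i.v)=w_0.(t(\tau_i(v)))=w_0.(s_{n-i}.(t(v)))=(w_0 s_{n-i}).(t(v))=(s_i w_0).(t(v))=s_i.\psi(v)$, so $\psi$ is $\symm_n$-equivariant.

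Since $V(n,k,0)\cong S^{(k,k,1^{n-2k})}$ is an absolutely irreducible $\symm_n$-module, Schur's lemma gives $\psi=\lambda\cdot\mathrm{id}$ for some $\lambda\in\QQ^\times$, i.e.\ $w_0.\pi=\lambda\, t(\pi)$ for all $\pi\in NC(n,k,0)$. As $w_0$ acts by an integer matrix on the skein basis (Theorem~\ref{module-theorem}) while $t(\pi)$ is itself a basis element, $\lambda\in\ZZ$; and since $\psi$ is a scalar it commutes with the $w_0$-action, which forces $w_0$ and $t$ to commute, whence $\psi^2=\mathrm{id}$ and $\lambda^2=1$. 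Thus $w_0.\pi=\pm t(\pi)$ with a single global sign.

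It remains to pin this sign to $(-1)^{\lfloor n/2\rfloor}$, and this I expect to be the main obstacle. The clean route is to evaluate $w_0$ on one well-chosen noncrossing partition and read off the sign: I would use $\pi_1$ (or $\pi_0$) of Figure~\ref{figure-pi} together with the reduced word $w_0=(s_1)(s_2 s_1)\cdots(s_{n-1}s_{n-2}\cdots s_1)$, invoking the local property of Lemma~\ref{local-property} to cut the computation down to subconfigurations in which no crossing is ever created, so that the sign is just a bookkeeping of the $-1$'s coming from blockmate swaps. One sanity check: when $n=2k$ this is exactly the longest-element calculation on the noncrossing perfect matching (Temperley--Lieb / Kazhdan--Lusztig) basis carried out in \cite{PPR,RhoadesJDT}, which yields the sign $(-1)^k=(-1)^{\lfloor n/2\rfloor}$; the point of concern for general $n$ is to confirm that the sign is uniformly $(-1)^{\lfloor n/2\rfloor}$ and, in particular, does not secretly depend on $k$. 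The reflection-symmetry verification $t\circ\tau_i=\tau_{n-i}\circ t$ of the second paragraph, although conceptually transparent, is the other place where some routine case-checking through $\sigma_1$--$\sigma_4$ and their degenerations is unavoidable.
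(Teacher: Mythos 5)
Your main reduction is exactly the paper's: extend scalars, let $T$ be the linear map induced by the reflection $t$ on the skein basis, check the intertwining relation $T\circ\tau_i=\tau_{n-i}\circ T$ (the paper asserts this directly from the definition of the $\tau_i$; your case check through $\sigma_1$--$\sigma_4$ of Figure~\ref{SkeinRelationsFigure}, with $\sigma_2$ and $\sigma_3$ exchanged, is the same point), combine it with $w_0 s_{n-i}=s_i w_0$ to see that the composite of $w_0$ and $T$ is $\symm_n$-equivariant, and invoke the irreducibility $V(n,k,0)\cong S^{(k,k,1^{n-2k})}$ of Proposition~\ref{isomorphism-special-case} plus Schur's Lemma to get $w_0=\gamma T$ with $\gamma^2=1$. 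Up to this point you are correct and essentially identical to the paper.

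The genuine gap is the determination of $\gamma$, which you yourself flag as the main obstacle and do not carry out; moreover the route you sketch would not go through as described. Expanding $w_0.\pi_1$ (or $w_0.\pi_0$) along the reduced word $(s_1)(s_2s_1)\cdots(s_{n-1}\cdots s_1)$ is not a mere bookkeeping of blockmate-swap signs: crossings appear almost immediately (for $\pi_1$ of Figure~\ref{figure-pi}, as soon as the word swaps the inner endpoints of two adjacent doubleton blocks, e.g.\ $n-2k+4$ and $n-2k+5$, one hits an almost noncrossing partition and the three- and four-term skein expansions with cancellations begin; for $\pi_0$ this already happens at $s_2$), and Lemma~\ref{local-property} gives no reduction here because $w_0$ is not supported on an interval that is a union of blocks. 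Such a computation would be of the same ``harrowing'' nature as the paper's Lemma~\ref{sign-determination-lemma} for the long cycle, and your $n=2k$ sanity check does not control general $n$. The paper closes the gap with no basis computation at all: since $w_0=\gamma T$, $\gamma=\pm1$, and $\mathrm{trace}(T)$ is the (nonnegative) number of reflection-symmetric elements of $NC(n,k,0)$, taking traces gives $\chi^{(k,k,1^{n-2k})}(w_0)=\gamma\,\mathrm{trace}(T)$; the Murnaghan--Nakayama rule at the involution $w_0$ (cycle type $2^{n/2}$, or $2^{(n-1)/2}1$ for $n$ odd) shows this character value is nonzero with sign $(-1)^{\lfloor n/2\rfloor}$, read off from a tiling of the flag shape by $\lfloor n/2\rfloor$ vertical dominoes (plus one monomino when $n$ is odd). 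Hence $\gamma=(-1)^{\lfloor n/2\rfloor}$, independently of $k$, which is exactly the $k$-independence you were worried about. To complete your argument, replace the reduced-word computation by this trace/Murnaghan--Nakayama step or an equally rigorous scalar determination.
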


\begin{proof}
For this proof, we extend scalars to $\CC$ and replace $V(n, k, 0)$ with $\CC \otimes V(n, k, 0)$.
Let $T: V(n, k, 0) \rightarrow V(n, k, 0)$ be the $\CC$-linear operator induced by reflection on $NC(n, k, 0)$.
For any $1 \leq i \leq n-1$,  the definition of the action of $s_i$ on $V(n, k, 0)$ gives
$T  s_i  = s_{n-i}  T$ 
as linear operators on $V(n, k, 0)$.  On the other hand, we also have that
$w_0 s_{n-i} = s_i w_0$ as operators 
on $V(n, k, 0)$.  Putting these equations together, we see that the operator
$w_0 T: V(n, k, 0) \rightarrow V(n, k, 0)$ commutes with the action of $s_i$ on $V(n, k, 0)$ 
for all $1 \leq i \leq n-1$.  Since the $s_i$ generate the action of $\symm_n$, this means that 
the map $w_0 T$ commutes with the action of $\symm_n$.
By Proposition~\ref{isomorphism-special-case},
the $\symm_n$-module $V(n, k, 0)$ is irreducible.
Schur's Lemma implies that there is some  scalar $\gamma \in \CC$
such that $w_0 = \gamma T$ as operators.  

The fastest way to prove that $\gamma = (-1)^{\lfloor \frac{n}{2} \rfloor}$ 
is probably to use the Murnaghan-Nakayama
rule.  Indeed, the permutation $w_0$ has $\frac{n}{2}$ $2$-cycles if $n$ is even
and $\frac{n-1}{2}$ $2$-cycles and a single $1$-cycle if $n$ is odd.  If $\lambda = (k, k, n-2k)$, the 
evaluation $\chi^{\lambda}(w_0)$ of the
irreducible character $\chi^{\lambda}: \symm_n \rightarrow \CC$ has sign given by the parity of the number
of vertical dominos in a domino tiling of the Ferrers diagram of $\lambda$ if $n$ is even and
the number of vertical dominos in a tiling of 
$\lambda$ consisting of one monomino and $\frac{n-1}{2}$ dominos
if $n$ is odd.  In either case, there is a tiling with $\lfloor \frac{n}{2} \rfloor$ vertical dominos, so
it follows that this sign is $(-1)^{\lfloor \frac{n}{2} \rfloor}$.  
This means that $\gamma = (-1)^{\lfloor \frac{n}{2} \rfloor}$.
\end{proof}

A similar Schur's Lemma argument was 
used by Stembridge \cite{Stembridge} to prove that $w_0 \in \symm_n$ acts by a sign times evacuation on
the KL cellular basis for arbitrary shapes and later adapted by the author \cite{RhoadesJDT} to show that
$c \in \symm_n$ acts by a sign times promotion on the KL cellular basis for rectangular shapes. 
Pechenik \cite{Pechenik} described a bijection from flag shaped tableaux to noncrossing partitions
which sends evacuation to reflection. 
Proposition~\ref{long-element-action}
therefore implies that the representing matrices for $w_0$ are the same in the KL and skein bases.  On the other hand,
the representing matrices for $c$ with respect to these bases are very different.  

We start our analysis of the action of $c$ on the skein basis by determining the action
of $c$ on the two skein basis elements $\pi_0$ and $\pi_1$ shown in 
Figure~\ref{figure-pi}.
This technical lemma will be used to determine a pair 
of scalars in a Schur's Lemma-type argument.

\begin{lemma}
\label{sign-determination-lemma}
Let $n, k > 0$ be such that $n \geq 2k$. Define noncrossing partitions $\pi_0, \pi_1 \in NC(n, k, 0)$ 
as in Figure~\ref{figure-pi}.

In the $\symm_n$-module $V(n, k, 0)$ we have that
$c.\pi_0 = (-1)^{n+1} r(\pi_0)$ and $c.\pi_1 = (-1)^{n+1} r(\pi_1)$,
where $r: NC(n, k, 0) \rightarrow NC(n, k, 0)$ is clockwise rotation.
\end{lemma}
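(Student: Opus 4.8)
The plan is to prove both identities by a direct, step-by-step computation. Write $c(\pi)$ for the set partition obtained from $\pi$ by sending each index $i$ to $i+1$ modulo $n$; this is precisely clockwise rotation $r(\pi)$, so the two claims read $c.\pi_j=(-1)^{n+1}c(\pi_j)$ for $j=0,1$, where on the left $c$ acts through the operators $\tau_i$. I would fix a reduced word $c=s_1s_2\cdots s_{n-1}$ and apply $\tau_{n-1},\tau_{n-2},\dots,\tau_1$ in turn to $\pi_0$ and to $\pi_1$.

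It is cleanest to dispose of the degenerate and base cases first. If $k=1$ then $V(n,1,0)$ is spanned by the single partition $\{[n]\}$, on which every $\tau_i$ acts by $-1$, so $c$ acts by $(-1)^{n-1}=(-1)^{n+1}$. If $n=2k$ then $\pi_0$ and $\pi_1$ are the fully nested and fully separated noncrossing perfect matchings, $V(2k,k,0)$ is the Temperley-Lieb / Kazhdan-Lusztig model of $S^{(k,k)}$, and the fact that $c$ acts on this basis as a sign times rotation (with sign $(-1)^{2k+1}=(-1)^{n+1}$) is exactly the content of \cite{PPR} and \cite{RhoadesJDT}. These facts also supply the input for the general argument.

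For the general case I would organize the computation by factoring $c$ through parabolic long cycles adapted to the block structure of $\pi_j$, applying them in the order they act. Both $\pi_0$ and $\pi_1$ are ``block-diagonal'' along suitable intervals of $[n]$: the blocks of $\pi_1$ split $[n]$ into $[1,m]$ with $m=n-2k+2$ and $[m+1,n]$, while $\pi_0$ has its $k-1$ nested doubletons in $[2,2k-1]$ and the remaining vertices $1,2k,2k+1,\dots,n$ in a single block. Factoring $c=c_{[1,m]}\,s_m\,c_{[m+1,n]}$ (resp.\ $c=s_1\,c_{[2,2k-1]}\,s_{2k-1}\,c_{[2k,n]}$), where $c_{[a,b]}=(a,a+1,\dots,b)$ in cycle notation, the ``bulk'' pieces act predictably: a parabolic cycle acting on an interval all of whose vertices are mutual blockmates contributes a scalar $\pm1$, and a parabolic cycle acting on an interval that is a union of doubleton blocks acts through that restriction by the local property of Lemma~\ref{local-property}, where it is a noncrossing perfect matching on $2k-2$ vertices and hence handled by the matching case above. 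What then remains is to apply a single adjacent transposition ($s_m$, resp.\ $s_{2k-1}$) to a single noncrossing partition, obtaining a short linear combination — the rotated partition $c(\pi_j)$ up to sign, together with one or two ``spurious'' terms coming from a skein resolution of the crossing created — and finally to push this combination through the remaining operators ($c_{[1,m]}$, resp.\ $c_{[2,2k-1]}$ and then $s_1$).

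The step I expect to be the main obstacle is verifying that the spurious terms cancel. Each application of a $\tau_i$ to a spurious term either negates it, permutes it to a single new partition, or triggers a further short ($\sigma_1$-, $\sigma_2$-, or $\sigma_3$-type) skein resolution, and one must check that all contributions other than multiples of $c(\pi_j)$ occur in cancelling pairs throughout. This is a finite but delicate bookkeeping argument, best carried out with the pictures of Figure~\ref{SkeinRelationsFigure}; it reflects the behaviour seen in small cases, where (for instance with $n=5$ and $k=2$) a sixteen-term intermediate expression collapses to the single partition $c(\pi_j)$. Once the cancellations are established, the surviving coefficient of $c(\pi_j)$ is a product of the $\pm1$ scalars accumulated along the way — the negations from $c_{[1,m]}$, or from $s_1$ and $c_{[2k,n]}$; the sign $(-1)^{2k-1}$ from the nested matching cycle; and the signs generated by $s_m$ (resp.\ $s_{2k-1}$) and the resolutions — and a routine count using $n=m+2k-2$ shows that this product equals $(-1)^{n+1}$.
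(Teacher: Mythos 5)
Your overall strategy is the same as the paper's: peel off the reduced word $s_1s_2\cdots s_{n-1}$ in stages, use the local-action Lemma~\ref{local-property} together with the matching case of \cite{PPR} to handle the segments where $\pi_0$ or $\pi_1$ restricts to a perfect matching or a single block, and then track the terms produced by the one transposition that creates a crossing. The factorizations $c = c_{[1,m]}s_m c_{[m+1,n]}$ and $c = s_1 c_{[2,2k-1]} s_{2k-1} c_{[2k,n]}$ are correct, and the first and second stages do behave as you say. The problem is that everything after that -- ``verifying that the spurious terms cancel'' -- is exactly the content of the lemma, and you defer it. For $\pi_1$, once $s_m$ has produced the three-term resolution, the resulting partitions have blocks reaching the vertex $n$, so the interval $[1,m]$ is no longer a union of blocks and Lemma~\ref{local-property} gives you nothing; each of $s_{m-1},\dots,s_1$ triggers further skein resolutions, and one must maintain an explicit expression (in the paper, a four-term expression proved by downward induction on $i$, Claims 1 and 2 of the actual proof) to see the cancellations and the final sign. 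Asserting that ``small cases collapse'' and that ``a routine count'' yields $(-1)^{n+1}$ does not substitute for this induction; the paper explicitly warns that the sign-times-rotation behavior should not be taken for granted, and without the bookkeeping your sign computation has nothing to attach to.

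There is also a concrete missing ingredient in the $\pi_0$ case. After $s_{2k-1}$ acts, one of the three terms has a block of the form $\{2,\,2k-1,\,2k\}$, so its restriction to the relevant interval is not a matching and is not handled by \cite{PPR}; it is a smaller copy of $\pi_0$ (for $(n,k)\to(2k-1,k-1)$), and the paper deals with it by invoking the statement of the lemma inductively on $n$, in combination with Lemma~\ref{local-property}. Your sketch only allows for bulk intervals that are either a single block or a union of doubletons, so this term falls outside your scheme, and the subsequent cancellation among the three processed terms (plus the final $s_1$) is again not carried out. To make the proposal into a proof you would need to (i) set up the induction on $n$ for $\pi_0$ and (ii) write down and verify the explicit intermediate expressions for $\pi_1$ (or an equivalent invariant) showing that all terms other than $\pm r(\pi_j)$ cancel, which is precisely where the work lies.
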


The somewhat harrowing inductions involved in the proof of this lemma will hopefully persuade the reader
that  $c$ acting as plus or minus rotation on the skein basis should not be taken for granted.

\begin{proof}
If $n = 2k$, the basis elements $NC(2k, k, 0)$ are noncrossing perfect matchings and this 
result follows from Petersen, Pylyavskyy, and Rhoades \cite[Lemma 4.3]{PPR}.  We therefore assume that $n > 2k$.
We examine the actions of $c$
on $\pi_0$ and $\pi_1$
separately.
We handle the case of $\pi_1$ first.

To calculate the action of $c = s_1 s_2 \cdots s_{n-1}$ on $\pi_1$, we consider applying the operators
$s_{n-1}, s_{n-2}, \dots,$ and $s_1$ (in that order) to the noncrossing partition $\pi_1$.  The form of the
resulting elements of $V(n)$ will change after the ``break point" $s_{n-2k+3}$ and again after another 
``break point" at $s_3$.  
The action 
of $s_i s_{i+1} \cdots s_{n-1}$   on $\pi_1$ for $n-2k+3 \leq i \leq n-1$
is as follows.

{\bf Claim 1:} {\it Suppose $n-2k+3 \leq i \leq n-1$.  Then the element
$s_i s_{i+1} \cdots s_{n-1}.\pi_1 \in V(n)$ is equal to the following expression:

\begin{equation}
\label{claim-one-equation}
s_i s_{i+1} \cdots s_{n-1}.\pi_1  = \begin{cases}
- \pi_1^{(i,1)} & \text{if $n-i$ is odd,} \\
- \pi_1^{(i,2)} - \pi_1^{(i,3)} & \text{if $n-i$ is even,} 
\end{cases}
\end{equation}

where $\pi_1^{(i,1)}, \pi_1^{(i,2)},$ and $\pi_1^{(i,3)}$ are the following noncrossing partitions:
\begin{itemize}
\item $\pi_1^{(i,1)}$ has blocks $\{1, 2, \dots, n-2k+2\}, \{n-2k+3, n-2k+4\}, \{n-2k+4, n-2k+5\}, \dots,
\{i-2, i-1\}, \{i, n\}, \{i+1, i+2\}, \{i+3, i+4\}, \dots, \{n-2, n-1\}$,
\item $\pi_1^{(i,2)}$ has blocks
$\{1, 2, \dots, n-2k+2\}, \{n-2k+3, n-2k+4\}, \{n-2k+4, n-2k+5\}, \dots,
\{i+2, i+3\}, \{i+1, n\}, \{i-1, i\}, \{i-3, i-2\}, \dots, \{n-2, n-1\}$, and
\item $\pi_1^{(i,3)}$ has blocks
$\{1, 2, \dots, n-2k+2\}, \{n-2k+3, n-2k+4\}, \{n-2k+4, n-2k+5\}, \dots,
\{i, i+1\}, \{i-1, n\}, \{i-3, i-2\}, \dots, \{n-2, n-1\}$.
\end{itemize}}

We prove Claim 1 by downward induction on $i$.  If $i = n-1$,  since $\{n-1, n\}$ is a block of $\pi_1$ we have
$s_{n-1}.\pi_1 = -\pi_1$ and the result holds.  

For the inductive step, we  apply the operator
$s_{i-1}$ to the right hand side of Equation~\ref{claim-one-equation}. 
 If $n - i$ is odd,
then  $B_{i-1}(\pi_1^{(i,1)}) \neq B_i(\pi_1^{(i,1)})$. 
 It follows that 
$s_{i-1}.(-\pi_1^{(i,1)}) = -\pi_1^{(i-1,2)} - \pi_1^{(i-1,3)}$.  If $n - i$ is even, then
$B_{i-1}(\pi_1^{(i,2)}) = B_i(\pi_1^{(i,2)})$
but $B_{i-1}(\pi_1^{(i,3)}) \neq B_i(\pi_1^{(i,3)})$.  
It follow that 
$s_{i-1}.(-\pi_1^{(i,2)} - \pi_1^{(i,3)}) = \pi_1^{(i,2)} - \pi_1^{(i,2)} - \pi_1^{(i-1,1)} = -\pi_1^{(i-1,1)}$.
This completes the proof of Claim 1.

By Claim 1, we  get that 
$s_{n-2k+3} \cdots s_{n-2} s_{n-1}.\pi_1 = - \pi_1^{(n-2k+3,1)}$.  Applying the operator
$s_{n-2k+2}$ to this expression gives the following element of $V(n)$.  (Here we use the assumption that 
$n > 2k$, so that the ``big block" of $\pi_1^{(n-2k+3,1)}$ has $> 2$ elements.)
\begin{equation}
\label{claim-one-to-two-equation}
s_{n-2k+2} s_{n-2k+3} \cdots s_{n-2} s_{n-1}.\pi_1 =
- \pi_1^{(n-2k+3,1)} - \pi_1'^{(n-2k+3,1)} +  \pi_1''^{(n-2k+3,1)},
\end{equation}
where
\begin{itemize}
\item $\pi_1'^{(n-2k+3,1)}$ has blocks 
$\{1, 2, \dots, n-2k+1, n\}, \{n-2k+2, n-2k+3\}, \{n-2k+4, n-2k+5\}, \dots,$ and $\{n-2, n-1\}$, and
\item $\pi_1''^{(n-2k+3,1)}$ has blocks
$\{1, 2, \dots, n-2k+1\}, \{n-2k+2, n-2k+3, n\}, \{n-2k+4, n-2k+5\}, \dots,$ and $\{n-2,n-1\}$.
\end{itemize}

For $3 \leq i \leq n-2k+1$, the expansion of
$s_i s_{i+1} \cdots s_n.\pi_1$ in the skein basis is as follows.

{\bf Claim 2:} {\it Suppose $3 \leq i \leq n-2k+1$.  Then the element 
$s_i s_{i+1} \cdots s_{n-1}.\pi_1 \in V(n)$ is given by
\begin{equation}
\label{claim-two-equation}
s_i s_{i+1} \cdots s_{n-1}.\pi_1 =
(-1)^{n+i} \pi_1^{(i,4)} + (-1)^{n+i} \pi_1^{(i,5)} + (-1)^{n+i+1} \pi_1^{(i,6)} + (-1)^{n+i+1} \pi_1^{(i,7)},
\end{equation}
where $\pi_1^{(i,4)}, \dots, \pi_1^{(i,7)}$ are the following noncrossing partitions:
\begin{itemize}
\item $\pi_1^{(i,4)}$ has blocks $\{1, 2, \dots, i, n\}, \{i, i+1, \dots, n-2k+3\}, \{n-2k+4, n-2k+5\}, \dots,
\{n-2, n-1\}$,
\item $\pi_1^{(i,5)}$ has blocks $\{1, 2, \dots, i-1\}, \{i, i+1, \dots, n-2k+3, n\}, \{n-2k+4, n-2k+5\}, \dots,
\{n-2, n-1\}$,
\item $\pi_1^{(i,6)}$ has blocks $\{1, 2, \dots, i-1, n\}, \{i, i+1, \dots, n-2k+3\}, \{n-2k+4, n-2k+5\}, \dots,
\{n-2, n-1\}$, and
\item $\pi_1^{(i,7)}$ has blocks $\{1, 2, \dots, i\}, \{i+2, i+2, \dots, n-2k+3, n\}, \{n-2k+4, n-2k+5\}, \dots,
\{n-2, n-1\}$.
\end{itemize}}

We prove Claim 2 by downward induction on $i$.  If $i = n-2k+1$, we apply the operator 
$s_{n-2k+1}$ to the expression on the right hand side of Equation~\ref{claim-one-to-two-equation}.
We have that $B_{n-2k+1}(\pi^{(n-2k+3,1)}) = B_{n-2k+2}(\pi^{(n-2k+3,1)})$, 
so that
$s_{n-2k+1}.(-\pi_1^{(n-2k+3,1)}) = \pi_1^{(n-2k+3,1)}$.  The application of 
$s_{n-2k+1}$ to $-\pi_1'^{(n-2k+3,1)}$ gives rise to the $3$-term expression
\begin{equation}
s_{n-2k+1}.(-\pi_1'^{(n-2k+3,1)}) = -\pi_1^{(n-2k+1,4)} - \pi_1'^{(n-2k+1,4)} + \pi_1^{(n-2k+1,6)},
\end{equation}
where $\pi_1'^{(n-2k+1,4)}$ is the set partition with blocks
$\{1, 2, \dots, n-2k, n-2k+3, n\}, \{n-2k+1, n-2k+2\}, \{n-2k+4, n-2k+5\}, \dots, \{n-2, n-1\}$.
The application of $s_{n-2k+1}$ to $\pi_1''^{(n-2k+3,1)}$ gives rise to the $4$-term expression
\begin{equation}
s_{n-2k+1}.\pi_1''^{(n-2k+3,1)} = \pi_1^{(n-2k+1,7)} + \pi_1'^{(n-2k+1,4)} - \pi_1^{(n-2k+1,5)} - 
\pi_1^{(n-2k+3,1)}.
\end{equation}
Adding these three expressions, we see that 
$s_{n-2k+1} \cdots s_{n-2} s_{n-1}(\pi_1)$ has the form given by Claim 2.  

For the inductive step,
we  apply the operator $s_{i-1}$ to the right hand side of Equation~\ref{claim-two-equation}.  Since 
$B_{i-1}(\pi_1^{(i,4)}) = B_i(\pi_1^{(i,4)})$, we have that 
$s_{i-1}.((-1)^{n+i}\pi_1^{(i,4)}) = (-1)^{n+i-1} \pi_1^{(i,4)}$.  
Since 
$B_{i-1}(\pi^{(i,7)}) = B_i(\pi^{(i,7)})$, 
we have that 
$s_{i-1}.((-1)^{n+i+1}\pi_1^{(i,7)}) = (-1)^{n+i} \pi_1^{(i,7)}$.  The application of 
$s_{i-1}$ to $(-1)^{n+i+1} \pi_1^{(i,6)}$ gives rise to the $4$-term expression
\begin{equation}
s_{i-1}.((-1)^{n+i+1} \pi_1^{(i,6)}) = (-1)^{n+i-1} \pi_1^{(i-1,4)} + (-1)^{n+i-1} \pi_1'^{(i,6)}
+ (-1)^{n+i} \pi_1^{(i-1,6)} + (-1)^{n+i} \pi_1^{(i,4)}, 
\end{equation}
where $\pi_1'^{(i,6)}$ has blocks $\{1, 2, \dots, i-1, i+2, i+3, \dots, n-2k+3, n\}, \{i, i+1\},
\{n-2k+4, n-2k+5\}, \dots, \{n-2, n-1\}$.  The application of $s_{i-1}$ to
$(-1)^{n+i} \pi_1^{(i,5)}$ gives rise to the $4$-term expression
\begin{equation}
s_{i-1}.((-1)^{n+i} \pi_1^{(i,5)}) =
(-1)^{n+i} \pi_1^{(i-1,7)} + (-1)^{n+i} \pi_1'^{(i,6)} + (-1)^{n+i-1} \pi_1^{(i-1,5)} 
+ (-1)^{n+i-1} \pi_1^{(i,7)}.
\end{equation}
Adding these four expressions, we see that 
$s_{i-1} s_i \cdots s_{n-1}.\pi_1$ has the form given by Claim 2.  This completes the proof
of Claim 2.

By Claim 2, we get that 
$s_3 s_4 \cdots s_{n-1}.\pi_1 = 
(-1)^{n+3} \pi_1^{(3,4)} + (-1)^{n+3} \pi_1^{(3,5)} + (-1)^{n+4} \pi_1^{(3,6)} + (-1)^{n+4} \pi_1^{(3,7)}$.
Applying $s_2$ to both sides yields the $3$-term expression
\begin{equation}
\label{after-claim-two}
s_2 s_3 \cdots s_{n-1}.\pi_1 = (-1)^{n+2} \pi_1^{(3,6)} + (-1)^{n+1} r(\pi_1) + 
(-1)^{n+1} \pi_1'^{(3,6)},
\end{equation}
where $\pi_1'^{(3,6)}$ has blocks $\{1,2\}, \{3, 4, \dots, n-2k+3, n\}, \{n-2k+4, n-2k+5\}, \dots, \{n-2, n-1\}$.  Finally,
applying $s_1$ to both sides of Equation~\ref{after-claim-two} yields
$c.\pi_1 = s_1 s_2 \dots s_{n-1}.\pi_1 = (-1)^{n+1} r(\pi_1)$, as claimed in the statement of the lemma.

We now focus on calculating the action of $c$ on $\pi_0$.  This is a shorter argument than the case of $\pi_1$, but slightly more 
conceptually involved.

Since $B_{2k}(\pi_0) = B_{2k+1}(\pi_0) = \cdots = B_n(\pi_0)$, 
we have that
\begin{equation}
\label{pi-two-first}
s_{2k} s_{2k+1} \cdots s_{n-1}.\pi_0 = (-1)^{n-2k-1}\pi_0 = (-1)^{n-1} \pi_0.
\end{equation}
Applying $s_{2k-1}$ to both sides of Equation~\ref{pi-two-first} yields the $3$-term expression
\begin{equation}
\label{pi-two-second}
s_{2k-1} s_{2k} \cdots s_{n-1}.\pi_0 = (-1)^{n-1} \pi_0  + (-1)^{n-1} \pi_0^{(1)} + (-1)^n \pi_0^{(2)},
\end{equation}
where
\begin{itemize}
\item $\pi_0^{(1)}$ has blocks $\{1, 2, 2k+1, 2k+2, \dots, n\}, \{3, 2k-2\}, \{4, 2k-3\}, \dots, \{k, k+1\}, \{2k-1,2k\}$, and
\item $\pi_0^{(2)}$ has blocks $\{1, 2k+1, 2k+2, \dots, n\}, \{2, 2k-1, 2k\}, \{3,2k-2\}, \{4, 2k-3\}, \dots, \{k, k+1\}$.
\end{itemize}
We may inductively assume that the statement of the lemma holds for all smaller values of $n$.  
Observe that the interval $[2, 2k-1]$ is a union of blocks of $\pi_0$ and $\pi_0^{(2)}$ 
and that the interval $[3, 2k-1]$ is a union of blocks of $\pi_0^{(1)}$.
We can now apply the ``local action" Lemma~\ref{local-property} and induction
 to interpret $s_2 s_3 \cdots s_{2k-2}.\pi_0, s_3 s_4 \cdots s_{2k-2}.\pi_0^{(1)},$ and
$s_2 s_3 \cdots s_{2k-2}.\pi_0^{(2)}$ in terms of rotations.

More precisely, observe that the restrictions of $\pi_0$ to $[2, 2k-1]$ or
$\pi_0^{(1)}$ to $[3, 2k-1]$ consist entirely of two-element blocks (i.e., these restrictions 
are noncrossing perfect matchings).
By Lemma~\ref{local-property} and \cite[Lemma 4.3]{PPR} we have that
\begin{equation}
\label{pi-two-third}
\begin{cases}
s_2 s_3 \cdots s_{2k-2}.((-1)^{n-1}\pi_0) = (-1)^{n-1} \pi_0^{(3)}, \\
s_3 s_4 \cdots s_{2k-2}.((-1)^{n-1}\pi_0^{(1)}) = (-1)^n r(\pi_0), 
\end{cases}
\end{equation}
where $\pi_2^{(3)}$ has blocks $\{1, 2k, 2k+1, \dots, n\}, \{2, 3\}, \{4, 2k-1\}, \{5, 2k-2\}, \dots, \{k+1, k+2\}$.

On the other hand, the restriction of $\pi_0^{(2)}$ to $[2, 2k-1]$ consists 
of the blocks $\{2, 2k-1, 2k\}, \{3, 2k-2\}, \{4, 2k-3\}, \dots, \{k, k+1\}$.  
After decrementing these indices by 1, we get a set partition 
$\pi_0$ as in the statement of the lemma corresponding to the smaller values of $n$ and $k$ given by
$(n, k) \rightarrow (2k-1, k-1)$.  Induction and Lemma~\ref{local-property} therefore imply that
\begin{equation}
\label{pi-two-third-prime}
s_2 s_3 \cdots s_{2k-2}.((-1)^n\pi_0^{(2)}) = (-1)^n  \pi_0^{(4)},
\end{equation}
where $\pi_2^{(4)}$ has blocks $\{1, 2k+1, 2k+2, \dots, n\}, \{2, 3, 2k\}, \{4, 2k-1\}, \{5, 2k-2\}, \dots, \{k+1, k+2\}$.

Applying $s_2$ to both sides of the lower equality of Equation~\ref{pi-two-third} gives
\begin{equation}
\label{pi-two-fourth}
s_2 s_3 \cdots s_{2k-2}.((-1)^{n-1}\pi_2^{(1)}) = (-1)^n r(\pi_2) + (-1)^n \pi_2^{(3)} + (-1)^{n-1} \pi_2^{(4)}.
\end{equation}
Summing  Equation~\ref{pi-two-third-prime}, Equation~\ref{pi-two-fourth},
and the upper line of Equation~\ref{pi-two-third} yields
\begin{equation}
\label{pi-two-fifth}
s_2 s_3 \cdots s_{n-1}.(\pi_2) = (-1)^n r(\pi_2).
\end{equation}
Since $B_1(r(\pi_2)) = B_2(r(\pi_2))$, 
we have that
$c.\pi_2 = s_1 s_2 \cdots s_{n-1}(\pi_2) = (-1)^{n+1} r(\pi_2)$, as claimed in the statement of the lemma.
This concludes the proof of the lemma.
\end{proof}

The next result is another technical lemma which will be used to show that 
the skein basis elements $\pi_0$ and $\pi_1$ in Figure~\ref{figure-pi}
are not both isolated in the same isotypic component of the restriction of $V(n, k, 0)$ from $\symm_n$ to
$\symm_{n-1}$.

\begin{lemma}
\label{nonvanishing-lemma}
Assume $n > 2k$ and 
let $\pi_0, \pi_1 \in NC(n, k, 0)$ be the skein basis elements shown in Figure~\ref{figure-pi}.
Let $\mu = (k, k-1, 1^{n-2k}) \vdash n-1$ and $\nu = (k, k, 1^{n-2k-1}) \vdash n-1$.
Then $[\symm_{\mu'}]_-.\pi_1 \neq 0$ and $[\symm_{\nu}]_+.\pi_0 \neq 0$.
\end{lemma}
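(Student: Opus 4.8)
The plan is to prove something slightly stronger than the stated nonvanishing: in each case the relevant group‑algebra element acts on $\pi_0$ (respectively $\pi_1$) by a nonzero scalar, up to (in the first case) a combination of \emph{other} skein basis elements. The mechanism throughout is that, under the skein action, an adjacent transposition $s_i$ whose two moved points are blockmates simply negates the partition — this is the middle case of the definition of $\tau_i$ — so any Young subgroup that only ever swaps blockmates of a fixed skein basis element acts on it by scalars. Note that both $\symm_{\mu'}$ and $\symm_\nu$ are Young subgroups of $\symm_{n-1}\subseteq\symm_n$, so their group‑algebra elements act on $V(n,k,0)$ by restriction.

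For $[\symm_\nu]_+.\pi_0$: since singleton parts of a composition contribute trivial factors to the associated Young subgroup, one has $\symm_\nu=\symm_{\{1,\dots,k\}}\times\symm_{\{k+1,\dots,2k\}}$ inside $\symm_n$, and this is literally the Young subgroup $\symm_\lambda$ for $\lambda=(k,k,1^{n-2k})$; hence $[\symm_\nu]_+=[\symm_\lambda]_+$ in $\QQ\symm_n$. The inequality $[\symm_\nu]_+.\pi_0\neq 0$ is then exactly the computation carried out in the proof of Proposition~\ref{isomorphism-special-case}: the restrictions of $\pi_0$ to $\{1,\dots,k\}$ and to $\{k+1,\dots,2k\}$ consist entirely of singletons, so $\spn\big(NC(n,k,0)\setminus\{\pi_0\}\big)$ is stable under $\symm_\nu$ and $s_i.\pi_0$ has $\pi_0$‑coefficient $1$ for each generator $s_i$ of $\symm_\nu$; therefore the $\pi_0$‑coefficient of $[\symm_\nu]_+.\pi_0$ is $|\symm_\nu|=(k!)^2\neq 0$. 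This part requires no new work beyond the identification $\symm_\nu=\symm_\lambda$.

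For $[\symm_{\mu'}]_-.\pi_1$: the first step is to compute the conjugate of $\mu=(k,k-1,1^{n-2k})$, which is $\mu'=(n-2k+2,\,2^{k-2},\,1)$, so
\[
\symm_{\mu'}=\symm_{\{1,\dots,n-2k+2\}}\times\symm_{\{n-2k+3,\,n-2k+4\}}\times\cdots\times\symm_{\{n-3,\,n-2\}},
\]
the size‑one part $\{n-1\}$ contributing a trivial factor. Comparing with $\pi_1=\{\{1,\dots,n-2k+2\}\}\cup\{\{n-2k+3,n-2k+4\},\dots,\{n-1,n\}\}$, one checks that the first factor permutes the points of the unique ``big'' block of $\pi_1$, and each of the $k-2$ transposition factors $\symm_{\{j,j+1\}}$ merely swaps the two endpoints of one of the short arcs $\{n-2k+3,n-2k+4\},\dots,\{n-3,n-2\}$ of $\pi_1$ (the remaining arc $\{n-1,n\}$ being untouched). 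Using the factorization $[\symm_{\mu'}]_-=[\symm_{\{1,\dots,n-2k+2\}}]_-\cdot\prod_j(1-s_j)$ recorded in Section~\ref{Background}, together with the fact that factors supported on disjoint intervals commute, I would apply these pieces to $\pi_1$ one at a time. Each $(1-s_j)$ with $j,j+1$ blockmates sends $\pi_1\mapsto\pi_1-\tau_j(\pi_1)=2\pi_1$, so the product of the $k-2$ transposition factors yields $2^{k-2}\pi_1$; and every generator $s_i$ with $1\le i\le n-2k+1$ swaps two blockmates of $\pi_1$, so by induction on Coxeter length $w.\pi_1=\sign(w)\,\pi_1$ for all $w\in\symm_{\{1,\dots,n-2k+2\}}$, whence $[\symm_{\{1,\dots,n-2k+2\}}]_-.\pi_1=\sum_w\sign(w)^2\,\pi_1=(n-2k+2)!\,\pi_1$. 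Combining, $[\symm_{\mu'}]_-.\pi_1=2^{k-2}(n-2k+2)!\,\pi_1\neq 0$.

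The only real obstacle is clerical: correctly computing $\mu'$ and — more to the point — verifying that the index ranges of its parts line up exactly with the big block and the short arcs of $\pi_1$, which is where the hypothesis $n>2k$ enters (it ensures $\pi_1\neq\pi_0$ and that the big block of $\pi_1$ is a genuine block rather than a two‑element arc). Once this bookkeeping is in place there is essentially nothing to compute, since every adjacent transposition that occurs acts on $\pi_1$ by a sign. (If $k=1$ the statement is vacuous since $\pi_0=\pi_1=\{[n]\}$, so one may assume $k\ge 2$ throughout.)
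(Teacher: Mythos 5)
Your proof is correct and follows the paper's argument essentially verbatim: you reduce $[\symm_{\nu}]_+.\pi_0\neq 0$ to the $\pi_0$-coefficient computation in the proof of Proposition~\ref{isomorphism-special-case} via the identification $\symm_{\nu}=\symm_{(k,k,1^{n-2k})}$ inside $\symm_n$, and you show $[\symm_{\mu'}]_-$ acts on $\pi_1$ by the scalar $2^{k-2}(n-2k+2)!$ because every adjacent transposition occurring in its factorization swaps blockmates of $\pi_1$. Your bookkeeping is in fact slightly more careful than the printed proof: the doubleton factors of $\symm_{\mu'}$ correspond to the transpositions $s_{n-2k+3},s_{n-2k+5},\dots,s_{n-3}$ as you say, whereas the paper's displayed factors $(1-s_{n-2k+4})\cdots(1-s_{n-2})$ are off by one.
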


\begin{proof}
In the group algebra $\QQ \symm_{n-1}$, the element $[\symm_{\mu'}]_-$ factors as
\begin{equation}
[\symm_{\mu'}]_- = [\symm_{n-2k+2}]_- (1-s_{n-2k+4}) \cdots (1-s_{n-4}) (1-s_{n-2}).
\end{equation}
By examining the blocks of $\pi_1$, we see that $(1-s_{n-2i}).\pi_1 = 2 \pi_1$ for $1 \leq i \leq k-2$
and $[\symm_{n-2k+2}]_-.\pi_1 = (n-2k+2)! \pi_1$.  Therefore, we get that
$[\symm_{\mu'}]_-.\pi_1 = 2^{k-2} (n-2k+2)!  \pi_1 \neq 0$.

Let $\lambda = (k, k, 1^{n-2k}) \vdash n$.  Then $\symm_{\nu}$ is naturally a subgroup of $\symm_{\lambda}$,
so that $[\symm_{\nu}]_+$ is a factor in the group algebra $\QQ \symm_n$ of 
$[\symm_{\lambda}]_+$.  It therefore suffices to show that 
$[\symm_{\lambda}]_+.\pi_0 \neq 0$.  This argument appears in the first four paragraphs of the proof of
Proposition~\ref{isomorphism-special-case}.
\end{proof}

We are in a good place to show that the long cycle in $\symm_n$ acts as plus or minus rotation on the skein basis of $V(n, k, 0)$.

\begin{theorem}
\label{long-cycle-action}
Let $n, k > 0$ and 
consider the representation $V(n, k, 0)$ with its skein basis $NC(n, k, 0)$.
Let $c = (1, 2, \dots, n) \in \symm_n$ be the long cycle.

For any skein basis element $\pi \in NC(n, k, 0)$, we have that 
\begin{equation}
c.\pi = (-1)^{n+1} r(\pi),
\end{equation}
where $r: NC(n, k, 0) \rightarrow NC(n, k, 0)$ is clockwise rotation.
\end{theorem}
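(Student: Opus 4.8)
The plan is to argue by Schur's lemma, but \emph{after} restricting the $\symm_n$-action to $\symm_{n-1} = \langle s_1, \dots, s_{n-2}\rangle$, since the skein action of $c$ and rotation are only compatible away from the ``seam'' between $n$ and $1$. The case $n = 2k$ is exactly \cite[Lemma 4.3]{PPR}, and when $k = 1$ the module $V(n,1,0)$ is one-dimensional so the claim is contained in Lemma~\ref{sign-determination-lemma}; thus assume $n > 2k$ and $k \geq 2$. Let $R \colon V(n,k,0) \to V(n,k,0)$ be the linear automorphism induced by clockwise rotation $r$ on the skein basis $NC(n,k,0)$. Since $(-1)^{n+1}$ is its own inverse, the theorem is equivalent to the operator identity $c^{-1}R = (-1)^{n+1}\,\mathrm{id}$ on $V(n,k,0)$.

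First I would record the intertwining relations $R\,\tau_i = \tau_{i+1}\,R$ and $c\,\tau_i = \tau_{i+1}\,c$ for $1 \leq i \leq n-2$. The second is immediate from Theorem~\ref{module-theorem} and the identity $c s_i c^{-1} = s_{i+1}$ in $\symm_n$. The first is checked directly on skein basis elements: $\tau_i$ depends only on the local structure of a partition near the vertices $i, i+1$, and for $i \leq n-2$ the rotation $r$ carries this pair to $\{i+1, i+2\}$ without crossing the seam, so all four skein flavors $\sigma_1$--$\sigma_4$ and the attendant signs transport correctly. Consequently $c^{-1}R$ commutes with $\tau_1, \dots, \tau_{n-2}$, hence with the whole $\symm_{n-1}$-action on $V(n,k,0)$ — though, crucially, not with the full $\symm_n$-action.

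Next I would invoke the branching rule. By Proposition~\ref{isomorphism-special-case}, $V(n,k,0) \cong S^{(k,k,1^{n-2k})}$, and under the hypotheses $n > 2k$, $k \geq 2$ the shape $(k,k,1^{n-2k})$ has exactly two removable boxes, so $\Res^{\symm_n}_{\symm_{n-1}} V(n,k,0) \cong S^\mu \oplus S^\nu$ is multiplicity-free, with $\mu = (k,k-1,1^{n-2k})$ and $\nu = (k,k,1^{n-2k-1})$, both partitions of $n-1$. Let $U_\mu, U_\nu$ be the corresponding isotypic subspaces. By Schur's lemma the $\symm_{n-1}$-equivariant operator $c^{-1}R$ acts as a scalar $\alpha$ on $U_\mu$ and a scalar $\beta$ on $U_\nu$, and it remains only to show $\alpha = \beta = (-1)^{n+1}$.

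For this I would feed in the special basis elements $\pi_0, \pi_1$ of Figure~\ref{figure-pi} via Lemmas~\ref{sign-determination-lemma} and \ref{nonvanishing-lemma}. Write $\pi_1 = v_\mu + v_\nu$ with $v_\mu \in U_\mu$, $v_\nu \in U_\nu$. Since $\mu_1 + \mu_2 = 2k-1 < 2k = \nu_1 + \nu_2$ we have $\nu \not\preceq \mu$, so by Lemma~\ref{pincer-lemma} the element $[\symm_{\mu'}]_-$ kills $U_\nu$; hence $0 \neq [\symm_{\mu'}]_-.\pi_1 = [\symm_{\mu'}]_-.v_\mu$ by Lemma~\ref{nonvanishing-lemma}, forcing $v_\mu \neq 0$. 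By Lemma~\ref{sign-determination-lemma}, $c.\pi_1 = (-1)^{n+1} r(\pi_1)$, hence $c^{-1}R.\pi_1 = (-1)^{n+1}\pi_1$; comparing $U_\mu$-components of $\alpha v_\mu + \beta v_\nu = (-1)^{n+1}(v_\mu + v_\nu)$ and using $v_\mu \neq 0$ gives $\alpha = (-1)^{n+1}$. Symmetrically, writing $\pi_0 = w_\mu + w_\nu$, the relation $\nu \not\preceq \mu$ shows $[\symm_\nu]_+$ kills $U_\mu$, so $0 \neq [\symm_\nu]_+.\pi_0 = [\symm_\nu]_+.w_\nu$ forces $w_\nu \neq 0$; then $c^{-1}R.\pi_0 = (-1)^{n+1}\pi_0$ yields $\beta = (-1)^{n+1}$. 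This establishes $c^{-1}R = (-1)^{n+1}\,\mathrm{id}$ on all of $V(n,k,0)$, which is the theorem. The main obstacle is precisely the failure of $\symm_n$-equivariance of $c^{-1}R$: a one-line Schur argument is unavailable, and the two scalars on the two pieces of the $\symm_{n-1}$-restriction must be pinned separately, which is exactly what the dominance bookkeeping of Lemma~\ref{pincer-lemma} together with the explicit non-vanishing of Lemma~\ref{nonvanishing-lemma} makes possible; the other delicate point is verifying $R\,\tau_i = \tau_{i+1}\,R$ for $i \leq n-2$ across all four skein flavors with correct signs.
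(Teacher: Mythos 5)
Your proposal is correct and follows essentially the same route as the paper: restrict the operator comparing $c$ with the rotation operator $R$ to an $\symm_{n-1}$-action, apply Schur's Lemma to the multiplicity-free branching $S^{(k,k,1^{n-2k})}\downarrow \cong S^{\mu}\oplus S^{\nu}$, and pin the two scalars using Lemma~\ref{sign-determination-lemma} together with Lemma~\ref{nonvanishing-lemma} and the dominance criterion of Lemma~\ref{pincer-lemma}. The only differences are cosmetic (working with $c^{-1}R$ rather than $R^{-1}\circ c$, spelling out the isotypic components of $\pi_0,\pi_1$ explicitly, and treating $k=1$ separately).
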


\begin{proof}
We extend scalars to $\CC$ for this proof.
The case $n = 2k$ is proven in \cite[Lemma 4.3]{PPR}, so we assume $n > 2k$.

By Proposition~\ref{isomorphism-special-case}, we have that
$V(n, k, 0) \cong_{\symm_n} S^{\lambda}$, where $\lambda = (k, k, 1^{n-2k}) \vdash n$ is a flag shaped partition.
By the branching rule for symmetric groups, we have that the restriction
$\Res^{\symm_n}_{\symm_{n-1}} V(n, k, 0)$ is isomorphic to a direct sum of two nonisomorphic
$\symm_{n-1}$-irreducibles.  Namely, we have that 
$\Res^{\symm_n}_{\symm_{n-1}} V(n, k, 0) \cong_{\symm_{n-1}} S^{\mu} \oplus S^{\nu}$,
where $\mu = (k, k-1, 1^{n-2k}) \vdash n-1$ and $\nu = (k, k, 1^{n-2k-1}) \vdash n-1$.  The $\CC$-algebra of 
$\symm_{n-1}$-endomorphisms of $\Res^{\symm_n}_{\symm_{n-1}} V(n, k, 0)$ is therefore
isomorphic to $\CC^2$.

Let $R: V(n, k, 0) \rightarrow V(n, k, 0)$ be the linear isomorphism induced by the rotation map 
$r: NC(n, k, 0) \rightarrow NC(n, k, 0)$ on the skein basis.  
For any $2 \leq i \leq n-1$,  we have that
$R^{-1} \circ s_i = s_{i-1} \circ R^{-1}$ 
as operators on $V(n, k, 0)$.  On the other hand, we have that 
$c \circ s_{i-1} = s_i \circ c$ as operators on $V(n, k, 0)$.  Putting this together, we have that
$(R^{-1} \circ  c)  \circ s_i = s_i  \circ (R^{-1} \circ  c)$ as operators on $V(n, k, 0)$.  Since
the adjacent transpositions
$s_2, s_3, \dots, s_{n-1}$ generate $\symm_{\{2, 3, \dots, n\}} = \symm_{n-1}$, 
this means that 
the operator $R^{-1} \circ c$ is an $\symm_{n-1}$-equivariant linear operator on $V(n, k, 0)$.
Therefore,
there exist two scalars 
$\gamma_1, \gamma_2 \in \CC$ such that 
$R^{-1} \circ c$ acts as $\gamma_1$ on the copy of $S^{\mu}$ inside $\Res^{\symm_n}_{\symm_{n-1}} V(n, k, 0)$
and $R^{-1} \circ c$ acts as $\gamma_2$ on the copy of $S^{\nu}$ inside
$\Res^{\symm_n}_{\symm_{n-1}} V(n, k, 0)$.  We want to show that 
$\gamma_1 = \gamma_2 = (-1)^{n+1}$, so that 
$c$ acts as $(-1)^{n+1} R$ on all of $\Res^{\symm_n}_{\symm_{n-1}}V(n, k, 0)$,
and therefore also on
$V(n, k, 0)$.

To show that $\gamma_1 = \gamma_2 = (-1)^{n+1}$, let $\pi_0, \pi_1 \in NC(n, k, 0)$ be the 
skein basis elements defined in Figure~\ref{figure-pi}.
Lemma~\ref{sign-determination-lemma} implies that
$R^{-1} \circ c(\pi_0) = (-1)^{n+1} \pi_0$ and $R^{-1} \circ c(\pi_1) = (-1)^{n+1} \pi_1$.
It is enough to show that the $2$-dimensional space $\mathrm{span}\{\pi_0, \pi_1\}$ 
is not contained in either isotopic component of
$\Res^{\symm_n}_{\symm_{n-1}} V(n, k, 0) = S^{\mu} \oplus S^{\nu}$.  Indeed, observe that $\mu \prec \nu$.  By Lemmas~\ref{pincer-lemma} and \ref{nonvanishing-lemma},
we know that $\pi_0 \notin S^{\mu}$ and $\pi_1 \notin S^{\nu}$.  It follows that $\mathrm{span}\{\pi_1, \pi_2\}$ 
is contained in neither isotypic component of $\Res^{\symm_n}_{\symm_{n-1}} V(n, k, 0)$.  
We conclude that $\gamma_1 = \gamma_2 = (-1)^{n+1}$, finishing the proof of the theorem.
\end{proof}

It is worth comparing the proof of Theorem~\ref{long-cycle-action} to the proofs of similar results appearing in the literature.
\begin{enumerate}
\item
In Stembridge's proof \cite{Stembridge}
 that the long element $w_0 \in \symm_n$ acts as plus or minus evacuation on the KL basis of an arbitrary 
$\symm_n$-irreducible $S^{\lambda}$ or in the proof of Proposition~\ref{long-element-action} that the long element
$w_0 \in \symm_n$ acts as plus or minus reflection on the skein basis, one proves directly that 
$T^{-1} \circ w_0$ commutes with the action of $\symm_n$ on the desired irreducible module, where $T$ is the linear operator
induced by the candidate combinatorial action on the relevant basis.  Schur's Lemma and a single scalar determination complete the proof.
\item
In the proof that the long cycle $c \in \symm_n$ acts as plus or minus promotion on the KL basis of a rectangular $\symm_n$-irreducible
$S^{\lambda}$ \cite{RhoadesJDT}, 
one first proves the weaker result that $J^{-1} \circ c$ commutes with the action of a smaller symmetric group 
$\symm_{n-1}$, where $J$ is the linear operator on $S^{\lambda}$ induced by promotion on the KL basis.  Since the restriction
of a rectangular $\symm_n$-module to $\symm_{n-1}$ remains irreducible, Schur's Lemma and a single scalar determination still suffice
to complete the proof.
\item
In the proof of Theorem~\ref{long-cycle-action} that $c \in \symm_n$ acts as plus or minus rotation on the skein basis of a flag shaped
$\symm_n$-irreducible $S^{\lambda}$, one again starts by proving that $R^{-1} \circ c$ commutes with the action of the smaller
symmetric group $\symm_{n-1}$, where $R$ is the linear operator on $S^{\lambda}$ induced by rotation on the skein basis.
However, the restriction of $S^{\lambda}$ from $\symm_n$ to $\symm_{n-1}$ is no longer irreducible.  Since $\lambda$ has two
outer corners, we need to determine {\it two} scalars to complete the proof with a Schur's Lemma argument.
\end{enumerate}
To the author's knowledge, Theorem~\ref{long-cycle-action} is the first instance of a representation theoretic result underlying a 
CSP involving a cyclic action of order $> 2$ which concerns a non-rectangular $\symm_n$-module.  
Theorem~\ref{long-cycle-action} breaks the rectangular barrier to include flag shaped irreducibles in the cyclic sieving family.
The progression illustrated here suggests that including more complicated irreducibles (corresponding to partitions $\lambda$
with more outer corners) will involve still more intricate sign determinations in the style of Lemma~\ref{sign-determination-lemma}.

\begin{example}
\label{six-example}
Let us consider the action of $c \in \symm_6$ on the skein basis element $\pi = \{ \{1, 2, 3\}, \{4, 5, 6\} \} \in V(6,2,0)$.  
Computing $c.\pi = s_1 s_2 s_3 s_4 s_5.\pi$ gives the following, where we indicate cancellations with lines of the same slope.

\vspace{0.1in}
\includegraphics[scale = 0.35]{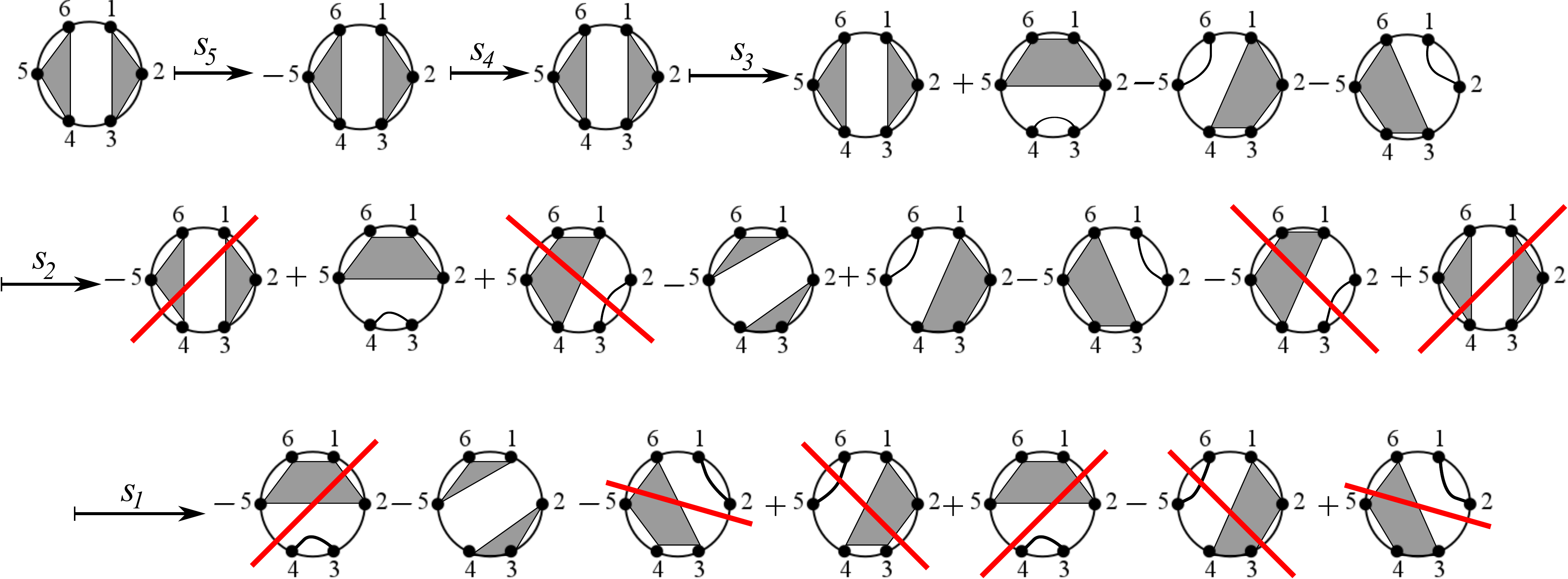}

As predicted by Theorem~\ref{long-cycle-action}, we get that $c.\pi = (-1)^{6+1} r(\pi)$.
\end{example}

\begin{remark}
Let $\lambda = (2, 2, 1, 1) \vdash 6$.  It can be shown that none of the representing matrices for permutations 
in $\symm_6$ acting on $S^{\lambda}$ with respect to the KL cellular basis is a monomial matrix corresponding 
to rotation on $NC(6, 2, 0)$.  Thus, Theorem~\ref{long-cycle-action} shows that 
the skein basis and the KL cellular basis are {\em different} in general.
For the cyclic sieving purposes of this paper, the skein basis is more advantageous.

On the other hand, the skein and KL cellular bases coincide for the special case of the 
$\symm_{2n}$-module
$V(2n, n, 0) \cong S^{(n, n)}$  spanned by noncrossing perfect matchings of $[2n]$.
\end{remark}

As a  corollary to Theorem~\ref{long-cycle-action}, we can describe action of the
``affine transposition" $s_n := (1, n) \in \symm_n$ on  the skein basis of $V(n, k, 0)$.
The relation $s_n = c s_{n-1} c^{-1}$ gives the following formula.

\begin{corollary}
\label{affine-transposition-action}
Let $\pi \in NC(n, k, 0)$ be a skein basis element of $V(n, k, 0)$.  We have that 
\begin{equation}
s_n.\pi = \begin{cases}
s_n(\pi) & \text{if $B_1(\pi) \neq B_n(\pi)$ and $s_n(\pi)$ is noncrossing,} \\
-\pi & \text{if $B_1(\pi) = B_n(\pi)$,} \\
r^{-1}(\sigma(r(\pi))) & \text{if $s_n(\pi)$ is crossing,}
\end{cases}
\end{equation}
where $r: NC(n, k, 0) \rightarrow NC(n, k, 0)$ is clockwise rotation.
\end{corollary}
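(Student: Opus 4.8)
The plan is to reduce everything to the action of the long cycle $c = (1,2,\dots,n)$, which Theorem~\ref{long-cycle-action} already pins down, by writing the affine transposition as a conjugate of an honest Coxeter generator. Since $c^{-1}s_1c = (c^{-1}(1),c^{-1}(2)) = (n,1) = s_n$ in $\symm_n$, and $s_i \mapsto \tau_i$ is a group homomorphism $\symm_n \to GL(V(n,k,0))$ by Theorem~\ref{module-theorem}, the operator giving the action of $s_n$ on $V(n,k,0)$ is $\tau_c^{-1}\circ\tau_1\circ\tau_c$, where $\tau_c := \tau_1\tau_2\cdots\tau_{n-1}$ is the operator representing $c$. (The decomposition $s_n = c\,s_{n-1}\,c^{-1}$ works equally well and gives the same operator, providing a built-in cross-check on the answer.)

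First I would substitute Theorem~\ref{long-cycle-action}. Letting $R$ denote the linear automorphism of $V(n,k,0)$ induced by clockwise rotation $r$ on the skein basis, that theorem gives $\tau_c = (-1)^{n+1}R$, and since $(-1)^{n+1} = \pm 1$ this also gives $\tau_c^{-1} = (-1)^{n+1}R^{-1}$. Plugging these in, the two scalar factors cancel and for a skein basis element $\pi \in NC(n,k,0)$ one gets $s_n.\pi = R^{-1}\bigl(\tau_1(r(\pi))\bigr)$. Thus the problem is reduced to evaluating $\tau_1$ on the single noncrossing partition $r(\pi)$ — which is read off directly from the defining formula for $\tau_1$ in Section~\ref{Symmetric} — and then applying $r^{-1}$.

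The one combinatorial point to nail down is how the local data at the affine edge $\{1,n\}$ of $\pi$ transports under rotation. A direct check on labels shows $s_1(r(\pi)) = r(s_n(\pi))$ as set partitions; consequently $B_1(r(\pi)) = B_2(r(\pi))$ exactly when $B_1(\pi) = B_n(\pi)$, and $s_1(r(\pi))$ is noncrossing, respectively crossing, exactly when $s_n(\pi)$ is. Now run the three cases in the definition of $\tau_1$ applied to $r(\pi)$. If $B_1(\pi) \neq B_n(\pi)$ and $s_n(\pi)$ is noncrossing, then $\tau_1(r(\pi)) = s_1(r(\pi)) = r(s_n(\pi))$, so $s_n.\pi = R^{-1}(r(s_n(\pi))) = s_n(\pi)$. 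If $B_1(\pi) = B_n(\pi)$, then $\tau_1(r(\pi)) = -r(\pi)$, so $s_n.\pi = -\pi$. If $s_n(\pi)$ is crossing, then $r(\pi)$ is noncrossing while $s_1(r(\pi)) = r(s_n(\pi))$ is almost noncrossing and crosses at the index $1$ (applying $s_1$ to it returns the noncrossing partition $r(\pi)$), so by Definition~\ref{skein-definition} $\tau_1(r(\pi)) = \sigma\bigl(s_1(r(\pi))\bigr)$, and hence $s_n.\pi = r^{-1}\bigl(\sigma(r(s_n(\pi)))\bigr)$ — one rotates, resolves the resulting crossing with $\sigma$, and rotates back — which is the asserted formula. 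Throughout, Lemma~\ref{sigma-restricts} keeps every intermediate vector inside $V(n,k,0)$ and Theorem~\ref{module-theorem} keeps all coefficients integral.

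I do not anticipate a real obstacle here: the substantive content, that $c$ acts as plus or minus rotation on the skein basis, is precisely Theorem~\ref{long-cycle-action}, which is already proved, and what remains is bookkeeping. The only spots demanding care are (i) checking that the two copies of $(-1)^{n+1}$ genuinely cancel, which uses that $(-1)^{n+1}$ is its own inverse and that $c^{-1}$ contributes the same sign as $c$; and (ii) the label identity $s_1(r(\pi)) = r(s_n(\pi))$, which is what guarantees that in the crossing case the skein move $\sigma$ is applied to the correct almost noncrossing partition after conjugation, so that Observation~\ref{almost-noncrossing-observation} and Definition~\ref{skein-definition} may be invoked.
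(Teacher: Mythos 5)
Your argument is correct and is essentially the paper's own proof: the paper obtains the corollary in one line from the relation $s_n = c\, s_{n-1}\, c^{-1}$ together with Theorem~\ref{long-cycle-action}, and your mirror decomposition $s_n = c^{-1} s_1 c$, with the sign cancellation and the transport of the local data $s_1(r(\pi)) = r(s_n(\pi))$, is the same conjugation-by-the-long-cycle argument written out in detail. One small remark: in the crossing case the expression you derive, $r^{-1}\bigl(\sigma(r(s_n(\pi)))\bigr)$, is the precise form of the corollary's $r^{-1}(\sigma(r(\pi)))$, since $\sigma$ is only defined on almost noncrossing partitions and it is $r(s_n(\pi)) = s_1(r(\pi))$, not $r(\pi)$, that carries the crossing to be resolved.
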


This section has focused on the actions of $w_0, c,$ and $s_n$ on the skein basis of $V(n, k, 0)$.
The actions of $w_0, c,$ and $s_n$ on the skein basis of $V(n)$ (or on its submodules $V(n, k), V(n, k, s)$) is easily obtainable by
combining Theorem~\ref{isomorphism} with
Theorem~\ref{long-element-action}, Theorem~\ref{long-cycle-action}, and
Corollary~\ref{affine-transposition-action}.  
We have that $w_0$ acts as plus or minus reflection and 
$c$ acts as plus or minus clockwise rotation on these skein bases.
In the next section we will reprove and generalize this result.

\section{Resolving crossings in set partitions}
\label{Resolving}

Recall that $P(n)$ is the $\QQ$-vector space spanned by the set $\Pi(n)$ of partitions of $[n]$, equipped with the
$\star$ action of $\symm_n$ given in Lemma~\ref{rhos-satisfy-coxeter}.
In Section~\ref{Almost} we defined a function $\sigma: ANC(n) \rightarrow V(n)$ which expressed any almost noncrossing partition
as a linear combination of noncrossing partitions.
The purpose of this section is to generalize $\sigma$ to a projection $p: P(n) \twoheadrightarrow V(n)$.   Given
any set partition $\pi \in \Pi(n)$, we can think of $p(\pi)$ as a ``resolution" of the crossings of $\pi$.

The $\QQ$-linear map $p: P(n) \twoheadrightarrow V(n)$
will be characterized by following three properties.
\begin{enumerate}
\item For any noncrossing partition $\pi \in NC(n)$ we have $p(\pi) = \pi$, so that $p$ is a linear projection onto $V(n)$.
\item For any almost noncrossing partition $\pi \in ANC(n)$ we have $p(\pi) = -\sigma(\pi)$.
\item The map $p$ is $\symm_n$-equivariant: for any $w \in \symm_n$ and $\pi \in \Pi(n)$ we have
$p(w \star \pi) = w . p(\pi)$.
\end{enumerate}
The minus sign in Property 2 appears so that Properties 1 and 3 can be achieved.  
Due to the ``partial minus sign" appearing in the $\tau_i$ operators in Section~\ref{Symmetric},
we are unsure how to alter our setup to get the more natural $p(\pi) = \sigma(\pi)$.
Since every set partition $\pi \in \Pi(n)$ is $\symm_n$-conjugate to a noncrossing partition and almost noncrossing partitions
are the images of noncrossing partitions under the generating set $\{s_1, \dots, s_{n-1}\}$ of $\symm_n$, 
 there is at most one $\QQ$-linear map $p: P(n) \twoheadrightarrow V(n)$ satisfying 
Properties 1-3.  We need to show that such a map $p$ exists.

It will be useful to identify canonical noncrossing orbit representatives for the action of $\symm_n$
on $\Pi(n)$.  Given any partition $\lambda = (\lambda_1, \lambda_2, \dots, \lambda_k) \vdash n$, let 
$\pi_{\lambda} \in NC(n)$ denote the noncrossing set partition
\begin{equation*}
\pi_{\lambda} := \{ \{1, 2, \dots, \lambda_1\}, \{\lambda_1+1, \lambda_1+2, \dots, \lambda_2\}, \dots,
\{n-\lambda_k+1, \dots, n-1, n\} \}.
\end{equation*}
For example, we have that 
$\pi_{(4,2,2,1)} = \{ \{1, 2, 3, 4\}, \{5, 6\}, \{7, 8\}, \{9\} \}$.  Any set partition in $\Pi(n)$ is conjugate
under the action of $\symm_n$ to a unique partition of the form $\pi_{\lambda}$ for 
some $\lambda \vdash n$.  

Our candidate for $p: P(n) \twoheadrightarrow V(n)$ is defined as follows.
Recall that $\star$ and $.$ denote the actions of $\symm_n$ on the modules
$P(n)$ and $V(n)$, respectively.
Given $\pi \in \Pi(n)$, there exists $w \in \symm_n$ such that $w(\pi)$ is a noncrossing set partition of the form $\pi_{\lambda}$
for some $\lambda \vdash n$.
Then $w \star \pi = \pm w(\pi) \in V(n)$, so that $w^{-1}.(w \star \pi)$ is an element of $V(n)$.
We set 
\begin{equation}
\label{equation-for-p}
p(\pi) := w^{-1}.(w \star \pi).
\end{equation}
At this stage, it is not clear that $p(\pi)$ is well defined (i.e., independent of the choice of $w$); we will prove this 
in Theorem~\ref{projection-corollary}.
We extend $p$ to a $\QQ$-linear map $p: P(n) \rightarrow V(n)$.

Explicitly, to compute $p(\pi)$, one finds a permutation $w \in \symm_n$ such that the set partition
$w(\pi)$ is of the form $\pi_{\lambda}$.  Then one writes $w = s_{i_1} \cdots s_{i_k}$ as a product of adjacent transpositions and acts,
within the module $V(n)$, by the composition $s_{i_k} \cdots s_{i_1}$ on 
$w \star \pi = \pm w(\pi)$.  
Assuming that the right hand side of  Equation~\ref{equation-for-p} is independent of $w$,
the $\symm_n$-equivariance of $p$  in Property 3 comes from the calculation
$p(v \star \pi) = (wv^{-1})^{-1}.((wv^{-1}) \star (v \star \pi)) = v.w^{-1}.(w \star \pi) = v.p(\pi)$,
where $\pi \in \Pi(n)$, $w(\pi)$ is of the form $\pi_{\lambda}$, and $v \in \symm_n$. 
It remains to show that $p$ is well defined, a projection onto $V(n)$ (Property 1), and agrees with $-\sigma$ on almost noncrossing partitions (Property 2).

To show that $p$ is well defined, we will need to analyze the $\symm_n$-stabilizer of set partitions of the form $\pi_{\lambda}$.
Given $\lambda \vdash n$,
we let $\Stab(\pi_{\lambda}) = \{w \in \symm_n \,:\, w(\pi_{\lambda}) = \pi_{\lambda} \}$ denote the group 
of permutations which stabilize $\pi_{\lambda}$.  The group
$\Stab(\pi_{\lambda})$ can be expressed as a direct product of wreath products of smaller symmetric groups,
with one wreath product for each part multiplicity in $\lambda$.  
While we will not need this explicit decomposition, we will use the fact that $\Stab(\pi_{\lambda})$ is generated by two
kinds of permutations:
\begin{enumerate}
\item adjacent transpositions  $s_i$ such that $i$ and $i+1$ belong to the same block of $\pi_{\lambda}$, and
\item permutations $w \in \symm_n$ which interchange two consecutive blocks of $\pi_{\lambda}$ of the same size while preserving the 
relative order of the elements in each block.
\end{enumerate}
For example, if $\lambda = (4, 2, 2, 1) \vdash 9$, then 
$\Stab(\pi_{\lambda}) \subset \symm_9$ is generated by the adjacent transpositions
$s_1, s_2, s_3, s_5,$ and $s_7$, together with the permutation $(5,7)(6,8)$.  

To show that our candidate for
$p: P(n) \twoheadrightarrow V(n)$ is well defined, we will need to analyze the action of $\Stab(\pi_{\lambda})$ on the  
noncrossing partition
$\pi_{\lambda}$ inside the modules $P(n)$ and $V(n)$. 
Namely, we need to show that these actions coincide.

\begin{lemma}
\label{stabilizer-lemma}
Let $\lambda \vdash n$ and let $w \in \Stab(\pi_{\lambda})$.  We have that 
\begin{equation}
w \star \pi_{\lambda} = w.\pi_{\lambda},
\end{equation}
where the action on the left is inside $P(n)$ and the action on the right is inside $V(n)$.
\end{lemma}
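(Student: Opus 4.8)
The plan is to reduce to the two standard generating types of $\Stab(\pi_\lambda)$ recalled just before the statement, and to verify the claimed equality on each. First observe that for any $w \in \Stab(\pi_\lambda)$ one has $w \star \pi_\lambda = \pm w(\pi_\lambda) = \pm \pi_\lambda$, so the rule $w \star \pi_\lambda = \epsilon(w)\,\pi_\lambda$ defines a group homomorphism $\epsilon: \Stab(\pi_\lambda) \to \{\pm 1\}$ (this uses only that $\star$ is linear and that $\pi_\lambda$ is a basis vector). I would then prove $w.\pi_\lambda = \epsilon(w)\,\pi_\lambda$ in $V(n)$ by induction on the length of $w$ as a word in the generators, the base case $w = e$ being trivial. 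If $w = g w'$ with $g$ a generator (so $w' = g^{-1} w \in \Stab(\pi_\lambda)$ has shorter length), then $w.\pi_\lambda = g.(w'.\pi_\lambda) = g.(\epsilon(w')\,\pi_\lambda) = \epsilon(w')\,(g.\pi_\lambda)$ by the inductive hypothesis, and since $\epsilon$ is a homomorphism everything comes down to the single-generator identity $g.\pi_\lambda = g \star \pi_\lambda = \epsilon(g)\,\pi_\lambda$.

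Checking the generators is where the work lies, and two of the three cases are immediate. If $g = s_i$ is a type~(1) generator, so $i$ and $i+1$ lie in a common block of $\pi_\lambda$ (necessarily of size $\geq 2$), then $\val_i(\pi_\lambda), \val_{i+1}(\pi_\lambda) \geq 1$ gives $g \star \pi_\lambda = \rho_i(\pi_\lambda) = -\pi_\lambda$, while $B_i(\pi_\lambda) = B_{i+1}(\pi_\lambda)$ lands us in the second clause of the definition of $\tau_i$, so $g.\pi_\lambda = \tau_i(\pi_\lambda) = -\pi_\lambda$. If $g$ is a type~(2) generator interchanging two consecutive blocks of common size $m$ with $m = 1$, then $g = s_i$ with $\{i\}$ and $\{i+1\}$ singleton blocks, and both actions return $+\pi_\lambda$ (the valence-$0$ clause of $\rho_i$ and the noncrossing clause of $\tau_i$). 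The remaining case is a type~(2) generator $g$ with block size $m \geq 2$: writing the two blocks as $\{a+1, \dots, a+m\}$ and $\{a+m+1, \dots, a+2m\}$, I would note that the interval $[a+1, a+2m]$ is a union of two blocks of $\pi_\lambda$ and that $g \in \symm_{[a+1, a+2m]}$, and then invoke the local-property Lemma~\ref{local-property}: the induced permutation of $[2m]$ is exactly $c^m$, where $c = (1, 2, \dots, 2m)$, and the induced noncrossing partition is $\pi_{(m,m)} = \{\{1, \dots, m\}, \{m+1, \dots, 2m\}\} \in NC(2m, 2, 0)$. By Theorem~\ref{long-cycle-action} the long cycle acts on $V(2m, 2, 0)$ as $(-1)^{2m+1} r = -r$, so $c^m.\pi_{(m,m)} = (-1)^m r^m(\pi_{(m,m)}) = (-1)^m \pi_{(m,m)}$ since rotation by $m$ merely swaps the two blocks of $\pi_{(m,m)}$; Lemma~\ref{local-property} then gives $g.\pi_\lambda = (-1)^m \pi_\lambda$. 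For the $\star$-side I would use locality of the operators $\rho_j$: carrying $g$ out through a reduced word inside $\symm_{[a+1, a+2m]}$, every intermediate set partition still has $[a+1, a+2m]$ as a union of two size-$m$ blocks, so each applied $\rho_j$ contributes a factor $-1$, whence $g \star \pi_\lambda = (-1)^{\ell(c^m)}\,\pi_\lambda = (-1)^{m^2}\,\pi_\lambda = (-1)^m\,\pi_\lambda$, using that $c^m$ has $m^2$ inversions in $\symm_{2m}$. The two signs agree, completing the generator check and hence the proof.

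The genuine obstacle, and the only step that is not pure bookkeeping, is the evaluation $c^m.\pi_{(m,m)} = (-1)^m \pi_{(m,m)}$ in $V(2m, 2, 0)$: this is exactly where the main result of Section~\ref{Long}, that the long cycle acts on the skein basis as $\pm$ rotation (Theorem~\ref{long-cycle-action}), gets used. One could instead evaluate $c^m.\pi_{(m,m)}$ directly from the skein relations, but that would amount to re-deriving this special case of Theorem~\ref{long-cycle-action}, so routing the computation through that theorem is the economical choice. Everything else — the homomorphism bookkeeping for $\epsilon$, the valence computations governing $\rho_i$ and $\tau_i$, and the inversion count for $c^m$ — is elementary.
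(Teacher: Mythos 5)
Your proposal is correct and follows essentially the same route as the paper: reduce to the generators of $\Stab(\pi_{\lambda})$, dispatch the within-block transpositions and the singleton block swap directly, and handle a swap of two consecutive size-$d$ blocks ($d\geq 2$) by combining Lemma~\ref{local-property} with Theorem~\ref{long-cycle-action} to get $c_{2d}^{d}.\pi_{(d,d)} = (-1)^{d}\pi_{(d,d)}$, matching the sign $(-1)^{d}$ on the $\star$ side. Your extra bookkeeping (the homomorphism $\epsilon$ and the inversion count $(-1)^{d^2}=(-1)^d$ justifying the $\star$-side sign) only makes explicit steps the paper leaves implicit.
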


\begin{proof}
It is enough to check the lemma when $w$ is a generator of $\Stab(\pi_{\lambda})$.  If $i$ and $i+1$ are in the same block of
$\pi_{\lambda}$, then
$s_i \star \pi_{\lambda} = - \pi_{\lambda} = s_i.\pi_{\lambda}$, as desired.  

Suppose that two consecutive blocks of $\pi_{\lambda}$ have the same size.  Write these blocks as
$\{i, i+1, \dots, i+d-1\}$ and $\{i+d, i+d+1, \dots, i+2d-1\}$ for some $i$ and $d$.  Let $w \in \symm_n$ be the permutation 
whose cycle notation is $w = (i, i+d)(i+1, i+d+1) \cdots (i+d-1, i+2d-1)$.   In the module $P(n)$ we have that
\begin{equation}
w \star \pi_{\lambda} = 
\begin{cases}
\pi_{\lambda} & \text{if $d = 1$} \\
(-1)^d \pi_{\lambda} & \text{if $d > 1$.}
\end{cases}
\end{equation} 
We are reduced to showing that $w.\pi_{\lambda}$ is given by the right hand side above in the module $V(n)$.
If $d = 1$ this is clear, so we assume that $d > 1$.

Consider the interval $[i, i+2d-1]$.
Since $w$ fixes indices outside this interval and this interval is a union of blocks of $\pi_{\lambda}$,
Lemma~\ref{local-property} implies that $w.\pi_{\lambda}$ can be calculated by acting on 
$\{ \{1, 2, \dots, d\}, \{d+1, d+2, \dots, 2d\} \} = \pi_{(d,d)}$ by $c_{2d}^d = (1,d+1)(2,d+2) \cdots (d,2d)$
(where $c_{2d} = (1, 2, \dots, 2d) \in \symm_{2d}$ is the long cycle), incrementing all indices by $i-1$,
and adding on the blocks of $\pi_{\lambda}$ lying outside the interval $[i, i+2d-1]$.
By Theorem~\ref{long-cycle-action},  we know that 
$c_{2d}^d.\pi_{(d,d)} = (-1)^{d(2d+1)} r^d (\pi_{(d,d)}) = (-1)^{d(2d+1)} \pi_{(d,d)} = (-1)^d \pi_{(d,d)}$.
\end{proof}

\begin{theorem}
\label{projection-corollary}
Equation~\ref{equation-for-p} gives a well defined projection of $\QQ \symm_n$-modules $p: P(n) \twoheadrightarrow V(n)$.
In particular, we have that $p(\pi) = \pi$ for any noncrossing partition $\pi \in NC(n)$.
Moreover,
we have that $p(\pi) = -\sigma(\pi)$ for any almost noncrossing partition $\pi \in ANC(n)$.
\end{theorem}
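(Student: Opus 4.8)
The plan is to establish the three defining Properties 1--3 of $p$ in turn, with everything resting on the well-definedness of Equation~\ref{equation-for-p}. For well-definedness, suppose $w_1(\pi)$ and $w_2(\pi)$ are both of the form $\pi_\lambda$; since every set partition is $\symm_n$-conjugate to a \emph{unique} $\pi_\lambda$, they coincide, so $u := w_2 w_1^{-1}$ lies in $\Stab(\pi_\lambda)$. Writing $w_1 \star \pi = \varepsilon\,\pi_\lambda$ with $\varepsilon \in \{\pm 1\}$ (legitimate as $\pi_\lambda$ is noncrossing), we have $w_2 \star \pi = u \star (w_1 \star \pi) = \varepsilon\,(u \star \pi_\lambda)$, and Lemma~\ref{stabilizer-lemma} lets us replace $u \star \pi_\lambda$ by $u.\pi_\lambda$, so
\[
w_2^{-1}.(w_2 \star \pi) \;=\; w_2^{-1}.\bigl(\varepsilon\,(u.\pi_\lambda)\bigr) \;=\; \varepsilon\,(w_2^{-1}u).\pi_\lambda \;=\; \varepsilon\, w_1^{-1}.\pi_\lambda \;=\; w_1^{-1}.(w_1 \star \pi),
\]
and $p(\pi)$ is independent of $w$. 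Property~3 ($\symm_n$-equivariance) is then the formal computation indicated after Equation~\ref{equation-for-p}: writing $v \star \pi = \eta\, v(\pi)$ one finds $p(v \star \pi) = (v w^{-1}).(w \star \pi) = v.p(\pi)$. Granting this, the image of $p$ is a submodule of $V(n)$ containing every $\pi_\lambda = p(\pi_\lambda)$, and since each summand $V(n,k,s)$ is a cyclic $\symm_n$-module by Theorem~\ref{isomorphism} (with a suitable $\pi_\lambda$ as generator), $p$ is surjective.

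I would next observe that Property~2 follows from Property~1. Let $\pi \in ANC(n)$ cross at $i$ and set $\pi' := s_i(\pi) \in NC(n)$, so that by definition of $\tau_i$ we have $\tau_i(\pi') = \sigma(s_i(\pi')) = \sigma(\pi)$, i.e.\ $s_i.\pi' = \sigma(\pi)$ in $V(n)$. By Observation~\ref{almost-noncrossing-observation} the valences $\val_\pi(i),\val_\pi(i+1)$ are both positive; since $s_i$ swaps $i$ and $i+1$, the valences of $\pi'$ at $i,i+1$ are those of $\pi$ in the other order and hence still positive, so $\rho_i(\pi') = -s_i(\pi') = -\pi$, i.e.\ $s_i \star \pi' = -\pi$. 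Applying $p$ and using equivariance together with Property~1 for the noncrossing $\pi'$ gives $-p(\pi) = p(s_i \star \pi') = s_i.p(\pi') = s_i.\pi' = \sigma(\pi)$, so $p(\pi) = -\sigma(\pi)$; in particular $p$ will be a projection once Property~1 is known, since it then restricts to the identity on $V(n)$.

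The crux is therefore Property~1, that $p(\pi) = \pi$ for every $\pi \in NC(n)$. A basic local input I would record first: $\rho_i$ and $\tau_i$ agree on $\pi \in NC(n)$ whenever $s_i(\pi)$ is again noncrossing. The only case needing argument is $B_i(\pi) \neq B_{i+1}(\pi)$, where one must know that $\{i\}$ or $\{i+1\}$ is a singleton of $\pi$ (then the sign in $\rho_i$ is $+$ and both operators return $s_i(\pi)$); equivalently, if $i,i+1$ lie in distinct non-singleton blocks $A \ni i$, $B \ni i+1$ of a noncrossing $\pi$, then $s_i(\pi)$ crosses --- writing $i'$ for the blockmate of $i$ met first going clockwise past $i+1$, noncrossingness confines $B$ to the arc $\{i+1,\dots,i'-1\}$, and for any $\beta \in B\setminus\{i+1\}$ the blockmate pairs $\{i,\beta\}$ and $\{i+1,i'\}$ of $s_i(\pi)$, lying in distinct blocks, form crossing chords. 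This local fact alone does not give Property~1, because a noncrossing partition may be reachable from its orbit representative $\pi_\lambda$ only through crossing partitions --- e.g.\ $\{\{1,2\},\{3,4\},\{5,6\}\}\in NC(6)$ is fixed up to sign by every $s_i$ that keeps it noncrossing. So I would argue by induction on $n$ (the cases $n \leq 3$ being vacuous). If $\pi$ is a single block then $\pi = \pi_{(n)}$ and $p(\pi)=\pi$ outright. Otherwise the block of $\pi$ containing $1$ has a nonempty gap, which is a genuine interval $[a,b]$ with $a \geq 2$, hence $[a,b] \subsetneq [n]$ and a union of blocks of $\pi$; restricting to $[a,b]$, invoking the inductive hypothesis in $\symm_{b-a+1}$, and using Lemma~\ref{local-property} to carry both the $\star$- and $\tau$-actions of permutations supported on $[a,b]$ up to $\symm_n$, one reduces $\pi$ to a noncrossing partition all of whose blocks are intervals. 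Bringing such a partition to its orbit representative $\pi_\mu$ needs only permutations interchanging whole blocks, and the remaining key point is that every such block interchange --- which by Lemma~\ref{local-property} restricts to a power of a long cycle acting on some two-block partition $\pi_{(d_1,d_2)}$ --- acts in the \emph{same} way via $\star$ and via the $\tau_i$'s; this one distils from Theorem~\ref{long-cycle-action} (and its extension, through Theorem~\ref{isomorphism}, to modules with singleton blocks), to the effect that $c$ acts on $V(n)$ by exactly the signed rotation by which it acts on $P(n)$.

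I expect this last step --- reconciling the signs and the block orderings when a partition with interval blocks is sorted into orbit-representative form --- to be the main obstacle, since it is the one place where the ``partial sign twist'' built into $\rho_i$ and $\tau_i$ must be tracked explicitly; the rest of the argument is formal. Once Property~1 is in hand, Properties~2 and~3 and the surjectivity and projection assertions follow as above, completing the proof of Theorem~\ref{projection-corollary}.
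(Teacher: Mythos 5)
Your handling of well-definedness (via Lemma~\ref{stabilizer-lemma}), of equivariance, and of the deduction $p(\pi)=-\sigma(\pi)$ from the identity $p|_{NC(n)}=\mathrm{id}$ together with equivariance is essentially the paper's argument, and your local observation that $\rho_i$ and $\tau_i$ agree on a noncrossing $\pi$ whenever $s_i(\pi)$ is noncrossing is correct. The gap is in your proof of Property 1. Your reduction uses only two kinds of moves: permutations supported on a gap $[a,b]$ of the block $B_1(\pi)$ containing $1$ (straightened by the inductive hypothesis and lifted by Lemma~\ref{local-property}), and interchanges of adjacent whole \emph{interval} blocks. Neither kind of move ever changes the block containing $1$: gap-supported permutations fix its elements pointwise, and block interchanges apply only to interval blocks. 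Hence if $B_1(\pi)$ is not an initial interval your procedure stalls. For example $\pi=\{\{1,4\},\{2,3\}\}\in NC(4)$, or $\pi=\{\{1,3,5\},\{2\},\{4\},\{6\}\}\in NC(6)$, already has every gap ``straight,'' yet is not an interval-block partition and cannot be carried to its orbit representative $\pi_{(2,2)}$ (resp.\ $\pi_{(3,1,1,1)}$) by your moves; so the claim that one ``reduces $\pi$ to a noncrossing partition all of whose blocks are intervals'' is false as stated, and Property 1 is not established for such $\pi$.

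What is missing is precisely the paper's ``winding'' moves: cycles $(i,i+1,\dots,i+k)$ on intervals $[i,i+k]$ that are unions of blocks of the current noncrossing partition, where the interval may contain the non-interval block being contracted together with the blocks filling its gaps. For instance $\{\{1,4\},\{2,3\}\}$ is carried to $\pi_{(2,2)}$ by the full rotation, and $\{\{1,3,5\},\{2\},\{4\},\{6\}\}$ reaches $\pi_{(3,1,1,1)}$ by rotating $[1,5]$ and then $[1,4]$. For each such move the agreement of the $\star$- and skein actions follows from Lemma~\ref{local-property} together with the signed rotation action of the long cycle --- Theorem~\ref{long-cycle-action} plus its extension (via Theorem~\ref{isomorphism}) to restrictions containing singleton blocks, with sign matching the $\star$-sign --- which is exactly the input you already invoke for block interchanges, so the repair fits your framework but must be added; your two-stage move set is provably too small. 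Two smaller remarks: the singleton-allowing, sign-matching extension of Theorem~\ref{long-cycle-action} is asserted rather than proved in your write-up (you flag it as the main obstacle), and it is needed already for your gap-straightening and block-interchange steps; and the aside that each $V(n,k,s)$ is cyclic with a $\pi_\lambda$ as generator is both unjustified and unnecessary, since surjectivity of $p$ is immediate once $p$ fixes $NC(n)$ pointwise.
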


\begin{proof}
We check that $p$ is well defined.  Let $\pi \in \Pi(n)$.  There exists a unique $\lambda \vdash n$ such that 
$\pi$ is in the $\symm_n$-orbit of $\pi_{\lambda}$.  Suppose that $w(\pi) = v(\pi) = \pi_{\lambda}$ for some $w, v \in \symm_n$.
We need to show that 
$w^{-1}.(w \star \pi) = v^{-1}.(v \star \pi)$ inside $V(n)$.  Since 
$v w^{-1}$ is contained in $\Stab(\pi_{\lambda})$, Lemma~\ref{stabilizer-lemma} implies that
$(v w^{-1}).(w \star \pi) = (v w^{-1}) \star (w \star \pi) = v \star \pi$.  Acting on this equation within $V(n)$ by $v^{-1}$ yields the desired equality.
It follows that $p$ is a well defined $\QQ$-linear map $P(n) \rightarrow V(n)$.  The remarks following Equation~\ref{equation-for-p}
imply that $p$ is also $\symm_n$-equivariant.

If $\lambda \vdash n$, we know that $p(\pi_{\lambda}) = \pi_{\lambda}$ by applying Equation~\ref{equation-for-p}
with $w = e$, the identity permutation.  If $\pi \in NC(n)$ is a general noncrossing partition, there exists a unique
$\lambda \vdash n$ be such that $\pi = w(\pi_{\lambda})$ for some $w \in \symm_n$.  
We have that 
$p(w \star \pi_{\lambda}) = w.p(\pi_{\lambda}) = w.\pi_{\lambda}$.  
Since $w \star \pi_{\lambda} = \pm \pi$, to show that $p(\pi) = \pi$ it suffices to prove the following claim.

\noindent
{\bf Claim:} {\it Let $\pi \in NC(n)$ and let $\lambda \vdash n$ be the unique partition such that $\pi$ is $\symm_n$-conjugate to $\pi_{\lambda}$.
There exists a permutation $w \in \symm_n$ such that $w(\pi_{\lambda}) = \pi$ and
$w.\pi_{\lambda} = w \star \pi_{\lambda}$.}

The proof of this claim is similar to the proof of Lemma~\ref{stabilizer-lemma}.  The key observation is that $\pi_{\lambda}$ can be ``wound up" into
$\pi$ by a sequence of moves $\pi_{\lambda} = \pi_1, w_1(\pi_1) = \pi_2, w_2(\pi_2) = \pi_3, \dots, w_{N-1}(\pi_{N-1}) = \pi_N = \pi$,
where 
\begin{itemize}
\item
each permutation $w_j \in \symm_n$ is of the form $w_j = (i_j, i_j+1, \dots, i_j + k_j)$, 
\item the set partition
$\pi_j$ is noncrossing for all $1 \leq j \leq N$, and 
\item
the interval
$[i_j, i_j + k_j]$ is a union of blocks of $\pi_j$ for all $1 \leq j \leq N$.
\end{itemize}

To give an example of this process, suppose that $\pi$ is the noncrossing partition of $[8]$
given by $\pi = \{ \{1, 2, 8\}, \{3, 4, 5, 7\}, \{6\} \}$.  Then $\lambda = (4,3,1)$ and $\pi$ is $\symm_8$-conjugate to the set partition
$\pi_{\lambda} = \{ \{1, 2, 3, 4\}, \{5, 6, 7\}, \{8\} \}$.  Let us consider how to go from $\pi_{\lambda}$ to $\pi$ using moves described in the last paragraph.
Applying first $(5,6,7,8)$, then $(1,2,3,4,5)$ to $\pi_{\lambda}$
yields the partition $\pi' = \{ \{1\}, \{2, 3, 4, 5\}, \{6, 7, 8\} \}$.
Applying $(1,2,3,4,5,6,7,8)^2$ to $\pi'$ yields
$\pi'' = \{ \{1, 2, 8\}, \{3\}, \{4,5,6,7\} \}$.
Finally, applying 
$(3,4,5,6,7)^3$ to $\pi''$ yields $\{ \{1, 2, 8\}, \{3, 4, 5, 7\}, \{6\} \} = \pi$.

Suppose we have a sequence
 $\pi_{\lambda} = \pi_1, w_1(\pi_1) = \pi_2, w_2(\pi_2) = \pi_3, \dots, w_{N-1}(\pi_{N-1}) = \pi_N = \pi$
 as described above.
We can use 
Lemma~\ref{local-property} and Theorem~\ref{long-cycle-action} to get that
$w_j \star \pi_j = w_j.\pi_j$ for all $1 \leq j \leq N-1$.  Therefore, we have that
$(w_{N-1} \cdots w_1) \star \pi_{\lambda} = (w_{N-1} \cdots w_1).\pi_{\lambda}$, so that the Claim holds for 
$w = w_{N-1} \cdots w_1$. 
This finishes the proof of the Claim.

To complete the proof of the theorem, let $\pi \in ANC(n)$ be an almost noncrossing partition which crosses at $i$.
Then $s_i(\pi)$ is noncrossing, so that 
$p(\pi) = s_i.(s_i \star \pi) = -s_i.(s_i(\pi)) = - \sigma(\pi)$.
\end{proof}

The fact that $p: P(n) \twoheadrightarrow V(n)$ is a map of $\symm_n$-modules means that for any $w \in \symm_n$, we have the following 
commutative square.

\begin{center}
\includegraphics[scale = 0.4]{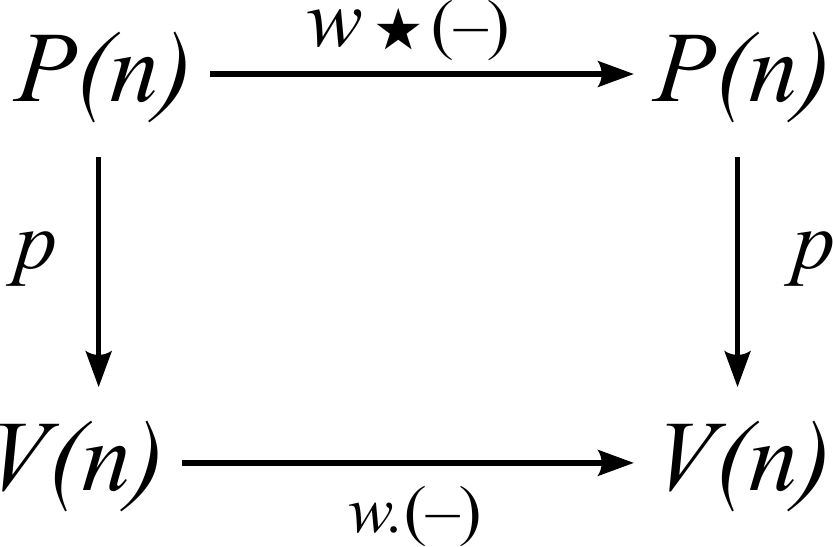}
\end{center}

The commutativity of this square immediately gives the following corollary, which roughly states that 
the action of $\symm_n$ on the skein basis of $V(n)$ preserves any ``local symmetry" of a specific noncrossing
partition $\pi \in NC(n)$ (not just a ``global symmetry" of $NC(n)$ itself).

\begin{corollary}
\label{symmetry-corollary}
Let $\pi$ be a noncrossing partition of $[n]$ and suppose that $w \in \symm_n$ is such that $w(\pi)$ is also noncrossing.
We have that $w.\pi = w \star \pi$, so that in particular $w.\pi = \pm \pi$.
\end{corollary}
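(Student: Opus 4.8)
The plan is to read the statement straight off the commutative square displayed just above it, i.e.\ off the $\symm_n$-equivariance of the projection $p \colon P(n) \twoheadrightarrow V(n)$ from Theorem~\ref{projection-corollary}. The two extra ingredients are that $p$ restricts to the identity on noncrossing partitions (also part of Theorem~\ref{projection-corollary}) and that $w \star \pi = \pm\, w(\pi)$ for every $w \in \symm_n$ and every $\pi \in \Pi(n)$, which is the defining feature of the $\star$ action recorded after Lemma~\ref{rhos-satisfy-coxeter}.

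First I would use that $\pi$ is noncrossing to write $p(\pi) = \pi$, and then apply equivariance of $p$ to get $w.\pi = w.p(\pi) = p(w \star \pi)$. Next I would write $w \star \pi = \varepsilon\, w(\pi)$ for a suitable sign $\varepsilon \in \{1,-1\}$; since $w(\pi)$ is noncrossing by hypothesis, Theorem~\ref{projection-corollary} gives $p(w(\pi)) = w(\pi)$, so by $\QQ$-linearity of $p$ we obtain
\[
w.\pi \;=\; p(\varepsilon\, w(\pi)) \;=\; \varepsilon\, w(\pi) \;=\; w \star \pi .
\]
Since $w \star \pi = \pm\, w(\pi)$, this is exactly the asserted identity $w.\pi = w \star \pi$, and in particular $w.\pi = \pm\, w(\pi)$.

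All of the genuine work has already been absorbed into Theorem~\ref{projection-corollary} (which in turn rests on Lemma~\ref{local-property}, Lemma~\ref{stabilizer-lemma}, and Theorem~\ref{long-cycle-action}), so there is no substantive obstacle here; the only point requiring a moment's care is that the sign $\varepsilon$ appearing in $w \star \pi = \varepsilon\, w(\pi)$ is the same sign that appears in $w.\pi = \varepsilon\, w(\pi)$, which is immediate from linearity of $p$ together with $p(w(\pi)) = w(\pi)$.
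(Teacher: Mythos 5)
Your argument is correct and is exactly the paper's proof: the corollary is read off the $\symm_n$-equivariance of $p$ from Theorem~\ref{projection-corollary} together with the fact that $p$ fixes noncrossing partitions (applied to both $\pi$ and $w(\pi)$), with $w \star \pi = \pm\, w(\pi)$ supplying the sign. Nothing further is needed.
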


Ignoring signs, the action of $w_0$ on $V(n)$ given in Theorem~\ref{long-element-action} and of
$c$ on $V(n)$ given in Theorem~\ref{long-cycle-action} are special cases of Corollary~\ref{symmetry-corollary}.
In fact, the signs involved in the actions of $w_0$ and $c$ on the skein basis of $V(n)$ are easy to deduce from the signs
involved in the $\star$ actions $w_0 \star \pi$ and $c \star \pi$ for $\pi \in NC(n)$.
However, the proof of Corollary~\ref{symmetry-corollary} relies heavily on our knowledge of the action of 
$c$ on the skein basis of $V(n)$.  It would be interesting to prove Corollary~\ref{symmetry-corollary} directly, without use
of Theorem~\ref{long-cycle-action}.

As another corollary to Theorem~\ref{projection-corollary}, we get an easier way to compute the image $p(\pi)$ of a set partition
$\pi \in P(n)$ under the map $p: P(n) \twoheadrightarrow V(n)$: find any $w \in \symm_n$ such that 
$w(\pi)$ is noncrossing and compute $p(\pi) = w^{-1}.(w \star \pi)$.
When $w$ is chosen to have small Coxeter length,
the following example illustrates that this process can be substantially more efficient than using
Equation~\ref{equation-for-p} to compute $p(\pi)$.

\begin{example}
Let $\pi$ be the set partition of $[8]$ given by $\pi = \{ \{1, 4, 8\}, \{2, 3, 5, 7\}, \{6\} \}$, which is not noncrossing.  Let us compute the
noncrossing resolution
$p(\pi) \in V(8)$.

We have that $w(\pi) = \{ \{1, 2, 8\}, \{3, 4, 5, 7\}, \{6\} \}$ is noncrossing, where $w = s_2 s_3 \in \symm_8$.
Moreover, we have that $w \star \pi = s_2 s_3 \star \pi = - s_2 \star s_3(\pi) =  s_2 s_3(\pi) = w(\pi)$ by the definition of the $\star$ action.
To compute $p(\pi)$, we apply the composition $s_3 s_2$ to the skein basis element $w(\pi)$ within the module $V(8)$.
This process is shown below.

\begin{center}
\includegraphics[scale = 0.3]{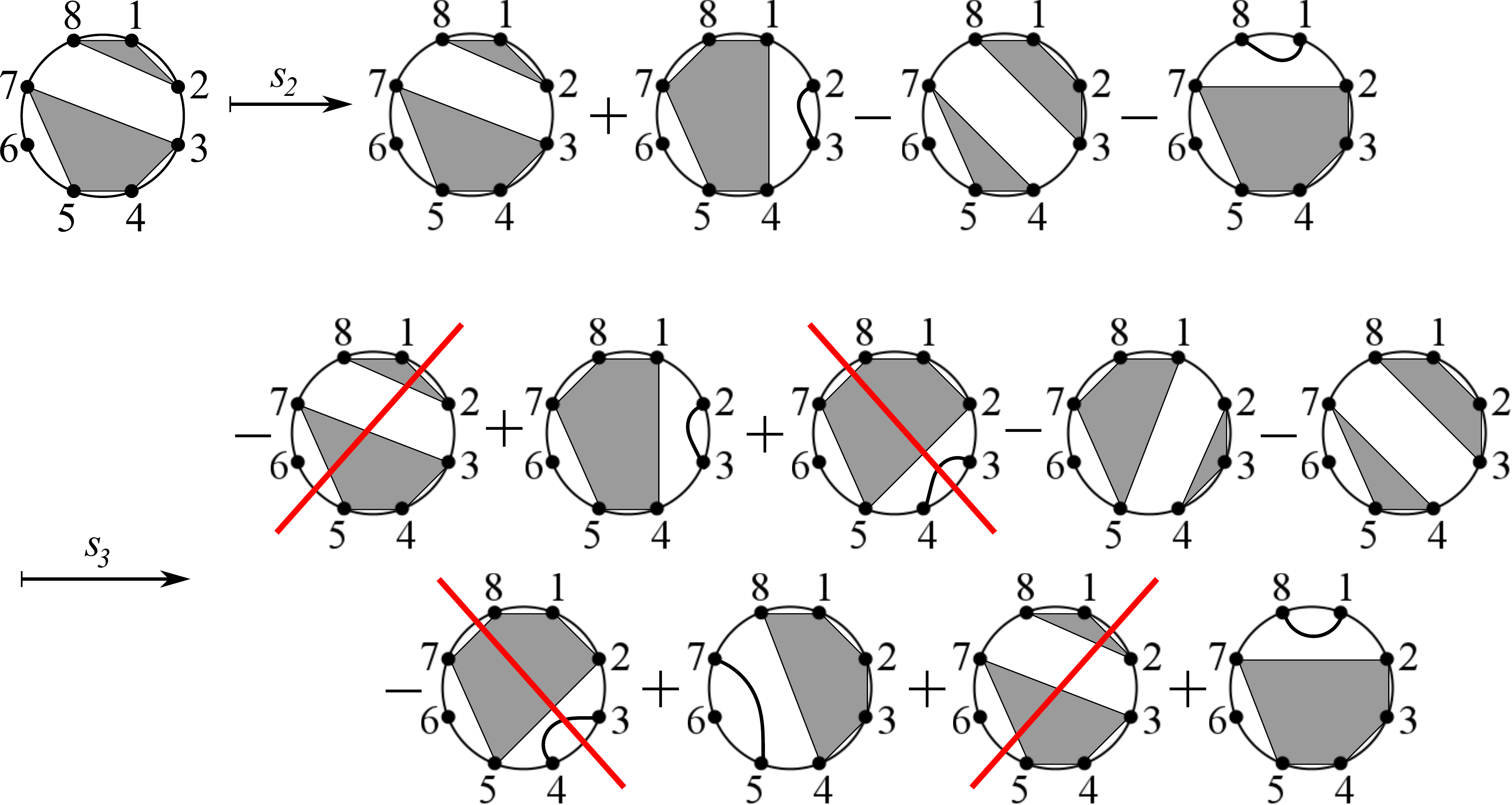}
\end{center}

We conclude that $p(\pi)$ is the expression given on the first page of this paper.

On the other hand, computing $p(\pi)$ using Equation~\ref{equation-for-p} would involve noticing that 
$\pi$ is $\symm_8$-conjugate to $\pi_{(4,3,1)} = \{ \{1, 2, 3, 4\}, \{5, 6, 7\}, \{8\} \}$.  The unique permutation $u$ of minimal Coxeter length
such that $u(\pi) = \pi_{\lambda}$ has one-line notation $u = 51263847$, so that $w$ can be expressed in Coxeter generators as
$u = s_4 s_3 s_2 s_1 s_5 s_4 s_7 s_6$.  Computing $p(\pi) = u^{-1}.(u \star \pi)$ using this method would involve acting on the skein basis by eight simple 
reflections instead of two.  We leave it to the reader to check that one gets the same answer.
\end{example}

Our method of computing $p(\pi)$ for a set partition $\pi$ is recursive.  It may be interesting 
to determine the expansion $p(\pi) \in V(n)$ explicitly.

\begin{problem}
Let $\pi$ be a set partition of $[n]$.  Give a non-recursive formula for $p(\pi)$.
\end{problem}

\section{Applications to cyclic sieving}
\label{Applications}

In this section we explain how to apply Proposition~\ref{long-element-action} and
Theorem~\ref{long-cycle-action} to get cyclic sieving results for the action of reflection and
rotation on various classes of noncrossing partitions.  

In order to equate irreducible character evaluations for the symmetric group with root of unity evaluations,
we will make use of the following theorem.  This result has appeared many times in  algebraic
proofs of cyclic sieving phenomena.

\begin{theorem}
\label{springer} (Springer's Theorem \cite{Springer})
Let $\lambda \vdash n$ and let $w \in \symm_n$ be a power of an $n$-cycle or an $(n-1)$-cycle.  
Let $\zeta \in \CC$ be a root of unity with order equal to the order of $w \in \symm_n$.

The irreducible character evaluation $\chi^{\lambda}(w)$ is given by
\begin{equation}
\chi^{\lambda}(w) = f^{\lambda}(\zeta),
\end{equation}
where $f^{\lambda}(q) \in \NN[q]$ is the fake degree polynomial for the $\lambda$-isotypic
component of the coinvariant algebra attached to $\symm_n$.
\end{theorem}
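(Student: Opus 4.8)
The plan is to recognize this as Springer's theorem on regular elements \cite{Springer}, specialized to the group $\symm_n$, and to reduce the statement to that general result. Recall that an element $g$ of a complex reflection group is \emph{$\zeta$-regular}, for a root of unity $\zeta$, if $g$ has a $\zeta$-eigenvector on the reflection representation lying on none of the reflecting hyperplanes; for $\symm_n$ acting on $\CC^n$, whose reflecting hyperplanes are the $\{x_i = x_j\}$, this just means $g$ has an eigenvector with pairwise distinct coordinates. Springer's theorem asserts that if $g$ is $\zeta$-regular, then for every irreducible character $\chi$ of the group one has $\chi(g) = f^{\chi}(\zeta)$, where $f^{\chi}(q) = \sum_d \langle \chi, R_d \rangle q^d$ is the fake degree attached to the coinvariant algebra $R = \bigoplus_d R_d$. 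Granting this, two things remain: (a) checking that the elements $w$ of the statement are $\xi$-regular for some primitive $|w|$-th root of unity $\xi$, and (b) matching $\xi$ with the given $\zeta$.

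For (a), I would first write down regular eigenvectors. The long cycle $c = (1,2,\dots,n)$ has the eigenvector $(1, \omega, \omega^2, \dots, \omega^{n-1})$ with eigenvalue $\omega^{-1}$, and its coordinates are distinct whenever $\omega$ is a primitive $n$-th root of unity, so $c$ is $\omega^{-1}$-regular. An $(n-1)$-cycle $c' = (1,2,\dots,n-1)$ fixing $n$ has, for $\omega'$ a primitive $(n-1)$-st root of unity, the eigenvector $(1, \omega', \dots, (\omega')^{n-2}, 0)$ with eigenvalue $(\omega')^{-1}$, whose coordinates are again pairwise distinct; hence $(n-1)$-cycles are regular in $\symm_n$, not merely inside $\symm_{n-1}$. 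Powers are free: if $g$ is $\zeta$-regular via the vector $v$, then $g^k$ is $\zeta^k$-regular via the same $v$. Therefore any power $w$ of an $n$-cycle or of an $(n-1)$-cycle is $\xi$-regular for some primitive $|w|$-th root of unity $\xi$, and Springer's theorem gives $\chi^{\lambda}(w) = f^{\lambda}(\xi)$.

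For (b), the choice of primitive $|w|$-th root does not matter: since $f^{\lambda}(q) \in \ZZ[q]$ and $\chi^{\lambda}(w) \in \ZZ$, the number $f^{\lambda}(\xi) = \chi^{\lambda}(w)$ is a rational integer, and the Galois automorphism of $\QQ(\xi)/\QQ$ taking $\xi$ to $\zeta$ fixes it, so $f^{\lambda}(\zeta) = \chi^{\lambda}(w)$, which is the claim. (Equivalently, $\chi^{\lambda}(w) = \chi^{\lambda}(w^{-1})$ while $w^{-1}$ is $\xi^{-1}$-regular, which already forces $f^{\lambda}$ to agree at all primitive $|w|$-th roots.)

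The genuine content sits in Springer's general theorem, proved via the invariant theory of the coinvariant algebra $R \cong \CC \symm_n$ together with the eigenspace theory of regular elements; inside the reduction above the only mild obstacles are the root-of-unity bookkeeping and the observation that an $(n-1)$-cycle is a regular element of the full group $\symm_n$. If one wanted a proof independent of Springer's machinery, I would instead compute $\chi^{\lambda}(w)$ directly from the Murnaghan--Nakayama rule --- when $w$ carries a cycle of length $n$ or $n-1$, only a restricted family of border-strip shapes contributes --- and evaluate $f^{\lambda}(\zeta) = \zeta^{b(\lambda)} [n]!_{\zeta}/\prod_{c}[h(c)]_{\zeta}$ by the standard ``$q$-Lucas'' root-of-unity evaluation of $q$-hook-length products; matching the two expressions is elementary but appreciably more work, so I would take the reduction to Springer's theorem as the main route and mention the combinatorial computation only as an alternative.
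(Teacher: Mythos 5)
Your proposal is correct and takes essentially the same route as the paper, which offers no independent proof but simply cites Springer's theorem on regular elements and remarks that the stated result is its specialization to $W = \symm_n$. Your supplementary verifications---that powers of $n$-cycles and $(n-1)$-cycles are regular elements of $\symm_n$ via eigenvectors with pairwise distinct coordinates (note these eigenvectors have coordinate sum zero, so they do lie in the reflection representation), and the Galois/rationality argument showing the choice of primitive root of unity is immaterial since $\chi^{\lambda}(w) \in \ZZ$---are accurate and fill in details the paper leaves implicit.
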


Springer's Theorem holds in  greater generality than presented here.  It can be used to calculate
any irreducible character $\chi$ of any  complex reflection group $W$ evaluated at a regular element 
$w \in W$
as a root of unity evaluation of the Hilbert polynomial of the $\chi$-isotypic component of the 
associated coinvariant algebra.  Theorem~\ref{springer} is the special case of Springer's Theorem
for $W = \symm_n$.
The polynomial $f^{\lambda}(q)$ is  $q^{b(\lambda)}$ times the $q$-analog 
$\prod_{c \in \lambda} \frac{[n]!_q}{[h_c]_q}$ of the hook length formula corresponding to $\lambda$.

We begin by using Theorem~\ref{long-cycle-action} to give an algebraic proof of 
Pechenik's CSP given in Theorem~\ref{no-singletons-csp}.  We begin by recalling its statement. \\

\noindent{\bf Theorem 1.3.} {\it (Pechenik \cite[Theorem 1.4]{Pechenik})  Let $n, k > 0$ be such that  $n \geq 2k$.  Let $X$ be the 
set of noncrossing partitions of $[n]$ with exactly $k$ blocks and no singleton blocks and let $C = \ZZ_n$
act on $X$ by rotation.  

The triple $(X, C, X(q))$ exhibits the cyclic sieving phenomenon, where
\begin{equation*}
X(q) = \frac{[k]_q}{[n-k]_q[n-k+1]_q} \times \frac{[n]!_q}{([k]!_q)^2 [n-2k]!_q}
\end{equation*}}

\begin{proof}
Let $\lambda = (k, k, 1^{n-2k})$ be the flag shaped partition in the statement of the theorem.  
Let $\zeta = e^{\frac{2 \pi i}{n}}$ be a primitive $n^{th}$ root of unity and suppose $d | n$.  We want to show 
that $|X^{r^d}| = X(\zeta^d)$, where $X = NC(n, k, 0)$, $r: X \rightarrow X$ is rotation, and
$X(q) \in \NN[q]$ is the $q$-hook length formula attached to the partition $\lambda$.

Let $\chi^{\lambda}: \symm_n \rightarrow \CC$ be the irreducible character associated to the partition
$\lambda$.
We have that
\begin{align}
|X^{r^d}| &=  (-1)^{d(n+1)} \chi^{\lambda}(c^d) \\
&= (-1)^{d(n+1)} f^{\lambda}(\zeta^d) \\
&= (-1)^{d(n+1)} (\zeta)^{d \cdot b(\lambda)} X(\zeta^d),
\end{align}
where the first equality uses Proposition~\ref{isomorphism-special-case} and 
Theorem~\ref{long-cycle-action}, the second equality uses Theorem~\ref{springer}, the the third
equality is the definition of the fake degree polynomial $f^{\lambda}(q)$.  We are therefore reduced to
showing that the complex number $(-1)^{d(n+1)} (\zeta)^{d \cdot b(\lambda)} |X^{r^d}|$ is a nonnegative integer.

Unfortunately, there exist vales of $n, k,$ and $d$ such that the product $(-1)^{d(n+1)} (\zeta)^{d \cdot b(\lambda)}$
is not even a real number, let alone a nonnegative integer.  
We argue that both sides of the equation
$|X^{r^d}| = (-1)^{d(n+1)} (\zeta)^{d \cdot b(\lambda)} X(\zeta^d)$ vanish at such values of $d$.
In particular, we have the following claim.

{\bf Claim:}  {\it Either $k(k-1) \equiv 0$ (mod $\frac{n}{d}$) or $X(\zeta^d) = 0$.}

There are two ways to prove this claim.  Arguing combinatorially, the action of $\frac{n}{d}$-fold rotation on
noncrossing partitions of $[n]$ breaks the blocks of any noncrossing partition up into free orbits, with the possible exception of 
one singleton orbit.  Since $k$ is the number of blocks, this says that either $k(k-1) \equiv 0$ (mod $\frac{n}{d}$) or 
$|X^{r^d}| = 0$.

On the other hand, we have a purely algebraic argument using the Murnaghan-Nakayama rule.  
Consider the flag shape $\lambda = (k, k, 1^{n-2k})$.  One shows that if there is a tiling of $\lambda$ with 
ribbons of size $\frac{n}{d}$, we necessarily have that $k(k-1) \equiv 0$ (mod $\frac{n}{d}$).  The 
Murnaghan-Nakayama Rule implies that $k(k-1) \equiv 0$ (mod $\frac{n}{d}$) or
$X(\zeta^d) = 0$.

Our Claim in hand, we argue that $X(\zeta^d)$ is a nonnegative integer.
We have that $b(\lambda) = \sum (i-1)\lambda_i = (n-k) + \frac{(n-2k)(n-2k+1)}{2}$.  We wish to show
that the product $(-1)^{d(n+1)} (\zeta)^{d \cdot b(\lambda)} |X^{r^d}|$ is a nonnegative integer.  We may assume 
that $|X^{r^d}| > 0$.  Be our Claim, we have
\begin{align}
d \cdot b(\lambda) &= d(n-k) + \frac{d(n-2k)(n-2k+1)}{2} \\
& \equiv 2dk(k-1) + \frac{dn(n+1)}{2} \\
 &\equiv \frac{dn(n+1)}{2} \\
& \equiv \begin{cases} \frac{dn}{2} & \text{$n$ even,} \\ 0 & \text{$n$ odd,} \end{cases}
\end{align}
where all equivalences are modulo $n$, the first equality follows from the definition of $b(\lambda)$,
the next congruence is a simple manipulation, the following congruence arises from the 
assumption $|X^{r^d}| > 0$ (so that $k(k-1) \equiv 0$ (mod $\frac{n}{d}$)), and the final congruence
is again a direct computation.  We conclude that $\zeta^{d \cdot b(\lambda)} = 1$ when
$n$ is odd and $\zeta^{d \cdot b(\lambda)} = (-1)^d$ when $n$ is even.  In either case, we have that
$(-1)^{d(n+1)} (\zeta)^{d \cdot b(\lambda)} |X^{r^d}|$ is a nonnegative integer.  This concludes our proof
of Theorem~\ref{no-singletons-csp}.
\end{proof}

Our next goal is to deduce the Narayana CSP in Theorem~\ref{narayana-csp} of Reiner, Stanton, and
White from the CSP of Pechenik.  There are two  equivalent ways to do this: use the action of $c \in \symm_n$
together with the isomorphism type of the module $V(n, k)$ proven in Proposition~\ref{isomorphism-special-case}
and
Theorem~\ref{isomorphism}
or use the close relationship between the action of rotation on noncrossing partitions with $k$ blocks
to the action of rotation on noncrossing partitions without singleton blocks together with
Theorem~\ref{narayana-csp} and a $q$-polynomial identity relating the associated cyclic sieving polynomials.
We present an argument that uses the latter approach, but the reader should be able to fill in the details
of an algebraic approach without too much difficulty.  Our basic tool is the following  $q$-analog
of the Chu-Vandermonde convolution.

\begin{theorem}
\label{chu}
Let $m, n,$ and $k$ be nonnegative integers.  We have that
\begin{equation}
{m+n \brack k}_q = \sum_{j = 0}^k q^{j(m-k+j)}{m \brack k-j}_q {n \brack j}_q.
\end{equation}
\end{theorem}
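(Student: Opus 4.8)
The plan is to prove Theorem~\ref{chu} by extracting a coefficient from the finite $q$-binomial theorem, which states that for every nonnegative integer $N$,
\begin{equation}
\prod_{i=0}^{N-1}(1 + q^i t) = \sum_{\ell = 0}^{N} q^{\binom{\ell}{2}} {N \brack \ell}_q \, t^\ell .
\end{equation}
First I would apply this with $N = m+n$ and factor the left-hand side as $\left(\prod_{i=0}^{m-1}(1+q^i t)\right)\left(\prod_{i=m}^{m+n-1}(1+q^i t)\right)$. Re-indexing the second factor by $i \mapsto i-m$ rewrites it as $\prod_{i=0}^{n-1}\bigl(1 + q^i (q^m t)\bigr)$, i.e.\ the same product with $N = n$ and $t$ replaced by $q^m t$. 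Applying the $q$-binomial theorem to each of the two factors and to the whole product of $m+n$ terms, and then comparing the coefficients of $t^k$ on both sides, gives
\begin{equation}
q^{\binom{k}{2}} {m+n \brack k}_q = \sum_{j=0}^{k} q^{\binom{k-j}{2}} {m \brack k-j}_q \cdot q^{\binom{j}{2}} q^{mj} {n \brack j}_q .
\end{equation}

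The remaining step is just bookkeeping of the exponent on the right-hand side. Using the elementary identities $\binom{k-j}{2} - \binom{k}{2} = \binom{j+1}{2} - kj$ and $\binom{j+1}{2} + \binom{j}{2} = j^2$, one computes
\begin{equation*}
\binom{k-j}{2} + \binom{j}{2} + mj - \binom{k}{2} = j^2 - kj + mj = j(m-k+j),
\end{equation*}
so dividing both sides of the displayed equation by $q^{\binom{k}{2}}$ yields precisely the asserted formula.

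There is no real obstacle here: the argument is a standard generating-function manipulation, and the only point requiring any care is the exponent computation just sketched, which is a two-line check. An alternative would be induction on $n$ using the two forms of the $q$-Pascal recurrence, ${N \brack k}_q = {N-1 \brack k-1}_q + q^k {N-1 \brack k}_q$ and ${N \brack k}_q = q^{N-k}{N-1 \brack k-1}_q + {N-1 \brack k}_q$, but the coefficient-extraction proof is cleaner and self-contained.
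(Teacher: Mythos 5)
Your argument is correct. Note that the paper itself offers no proof of Theorem~\ref{chu}: it is invoked as a standard tool (the $q$-Chu--Vandermonde convolution), so there is no in-paper argument to compare against. Your coefficient-extraction derivation from the finite $q$-binomial theorem is the standard one and it checks out: splitting $\prod_{i=0}^{m+n-1}(1+q^i t)$ at $i=m$, substituting $q^m t$ in the second factor, and comparing coefficients of $t^k$ gives the exponent $\binom{k-j}{2}+\binom{j}{2}+mj-\binom{k}{2}$, and your two elementary identities reduce this to $j^2+(m-k)j=j(m-k+j)$ as required. The alternative induction via the two $q$-Pascal recurrences would also work, but as you say the generating-function route is cleaner.
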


We will also use the ``fundamental" example of a CSP.

\begin{theorem}
\label{sets-csp} 
(Reiner-Stanton-White \cite[Theorem 1.1]{RSWCSP})
Let $n \geq k > 0$, let $X = {[n] \choose k}$ be the set of $k$-element subsets of $[n]$, and let 
$C = \ZZ_n$ act on $X$ by the long cycle $(1, 2, \dots, n)$.  The triple $(X, C, X(q))$ exhibits the cyclic sieving
phenomenon, where $X(q) = {n \brack k}_q$.
\end{theorem}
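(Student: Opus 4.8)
The plan is to verify the cyclic sieving phenomenon directly from its definition: fix $d \geq 0$, set $e := \gcd(n,d)$ and $f := n/e$, and check that the fixed-point count $|X^{c^d}|$ equals the evaluation of ${n \brack k}_q$ at $q = \zeta^d$, where $\zeta := e^{2\pi i/n}$, so that $\zeta^d$ is a primitive $f$-th root of unity with $(\zeta^d)^f = 1$.

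\emph{Combinatorial side.} The cyclic group $\langle c^d\rangle$ acts on $[n]$ with exactly $e$ orbits, each of size $f$. A $k$-element subset $S \subseteq [n]$ is fixed by $c^d$ exactly when $S$ is a union of these orbits; this forces $f \mid k$, and then the fixed subsets are in bijection with the $(k/f)$-element subsets of the $e$-element set of orbits. Hence $|X^{c^d}| = {e \choose k/f}$ when $f \mid k$ and $|X^{c^d}| = 0$ otherwise.

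\emph{Algebraic side.} This is where Theorem~\ref{chu} enters and where the only real care is needed. First I would record that ${f \brack i}_q$ evaluated at $q = \zeta^d$ equals $\delta_{i,0} + \delta_{i,f}$ for every $i \geq 0$: this is clear for $i = 0, f$ and for $i > f$ (where ${f \brack i}_q = 0$ identically), while for $0 < i < f$ one uses the polynomial identity ${f \brack i}_q \cdot [i]!_q [f-i]!_q = [f]!_q$ and observes that $[f]!_q$ vanishes at $\zeta^d$ — its factor $[f]_q = 1 + q + \cdots + q^{f-1}$ does — whereas each factor of $[i]!_q [f-i]!_q$ is some $[m]_q$ with $0 < m < f$ and hence nonzero at $\zeta^d$; therefore ${f \brack i}_q$ vanishes at $\zeta^d$. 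Next I would apply Theorem~\ref{chu} with $m$ and $n$ (in the notation of that theorem) set to $\ell f$ and $f$, for $\ell = 0, 1, \dots, e-1$. Evaluating at $q = \zeta^d$, only the summands with summation index $0$ and $f$ survive, and since $(\zeta^d)^{f(\ell f - k + f)} = 1$ this collapses to the base-$f$ Pascal recursion: the evaluation of ${(\ell+1)f \brack k}_q$ at $\zeta^d$ equals the sum of the evaluations of ${\ell f \brack k}_q$ and ${\ell f \brack k-f}_q$ at $\zeta^d$ (with the convention ${m \brack k}_q = 0$ for $k < 0$, which also covers the case $k < f$). Iterating this $e$ times from the base case ${0 \brack k}_q = \delta_{k,0}$, the evaluation of ${n \brack k}_q$ at $\zeta^d$ works out to $\sum_{i \geq 0}{e \choose i}\,{0 \brack k - if}_q\big|_{q=\zeta^d}$, which is ${e \choose k/f}$ when $f \mid k$ and $0$ otherwise. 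Matching this with the combinatorial count completes the proof.

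The only genuine obstacle is the root-of-unity evaluation of the Gaussian binomial coefficient, and once the vanishing of ${f \brack i}_q$ at $\zeta^d$ for $0 < i < f$ is isolated the rest is a mechanical unwinding of the $q$-Chu--Vandermonde identity. I would also mention — without grinding through it — that a representation-theoretic proof in the style of the rest of this section is available: one realizes the permutation module $\QQ[X]$ as a suitably graded $\symm_n$-module whose graded character at the long cycle recovers both $|X^{c^d}|$ and, via Springer's Theorem (Theorem~\ref{springer}) applied to the two-row shapes $(n-j, j)$ occurring in $\QQ[X]$, the value ${n \brack k}_q\big|_{q=\zeta^d}$; but the elementary computation above is shorter and self-contained.
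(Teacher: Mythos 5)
Your proof is correct. Note that the paper itself does not prove this statement — it is imported verbatim from Reiner--Stanton--White as a known ingredient for the Narayana and Catalan arguments in Section~8 — so there is no in-paper proof to compare against; your argument is essentially the standard elementary one from the original RSW paper. The two load-bearing points both check out: the evaluation ${f \brack i}_q\big|_{q=\zeta^d} = \delta_{i,0}+\delta_{i,f}$ (the single vanishing factor $[f]_q$ in the numerator of $[f]!_q$ against a denominator whose factors $[m]_q$, $0<m<f$, are all nonzero at a primitive $f$-th root of unity), and the collapse of the $q$-Chu--Vandermonde sum to the base-$f$ Pascal recursion, where the surviving power $q^{f(\ell f-k+f)}$ is indeed $1$ at $\zeta^d$ since $(\zeta^d)^f=1$. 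The degenerate case $f=1$ is also covered, since there is then no index $i$ with $0<i<f$ and the recursion degenerates to ordinary Pascal. The closing remark about a representation-theoretic proof via Springer's Theorem on the two-row constituents of $\QQ[X]$ is a reasonable aside and not needed for correctness.
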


We recall the statement of the Narayana CSP.  \\

\noindent {\bf Theorem 1.2.}  
{\it (Reiner-Stanton-White \cite[Theorem 7.2]{RSWCSP}) Let $n \geq k > 0$, let $X$ be the set of noncrossing partitions of $[n]$ with exactly $k$ blocks,
and let $C = \ZZ_n$ act on $X$ by rotation.
The triple $(X, C, X(q))$ exhibits the cyclic sieving phenomenon, where 
\begin{equation*}
X(q) = \Nar_q(n,k) = \frac{1}{[n]_q}{n \brack k}_q {n \brack k-1}_q
\end{equation*}
is the $q$-Narayana number.}

\begin{proof}
Let $\zeta = e^{\frac{2 \pi i}{n}}$ and let $1 \leq d \leq n$ with $d | n$.   Let $r: X \rightarrow X$ be the rotation map. 
A typical set partition in the fixed point set 
$X^{r^d}$ consists of singleton blocks and non-singleton blocks.  The singleton blocks must form
a $\frac{n}{d}$-fold symmetric subset of $[n]$ and the non-singleton blocks must form a 
$\frac{n}{d}$-fold symmetric noncrossing partition which contains no singleton blocks.

{\bf Claim:}  {\em The number
of such set partitions with precisely $s$ singleton blocks equals the root of unity evaluation
\begin{equation}
\label{evaluation}
\left[ q^{(k-s-1)(k-s)} {n \brack s}_q \frac{[n-s]!_q}{([k-s]!_q)^2 [n-2k+s]!_q} \times \frac{[k-s]_q}{[n-k]_q[n-k+1]_q} \right]_{q = \zeta^d}.
\end{equation}}

By Theorems~\ref{no-singletons-csp} and \ref{sets-csp}, the polynomial evaluation in (\ref{evaluation}) 
equals the cardinality of the set in question up to modulus.
To prove our claim it is enough to show that we have
$\zeta^{d(k-s-1)(k-s)} = 1$ when the quantity in (\ref{evaluation}) does not vanish.  As in the proof of 
Theorem~\ref{no-singletons-csp}, this can be argued combinatorially or algebraically.  

From a combinatorial point of view, since 
the $k-s$ non-singleton blocks of any noncrossing set partition fixed by $\frac{n}{d}$-fold rotation consist
of free orbits together with possibly one singleton orbit, we have that 
$d(k-s)(k-s-1) \equiv 0$ (mod $n$) if $X^{r^d}$ is nonempty.

From an algebraic point of view, the  
evaluation of ${n \brack s}_q$ at $q = \zeta^d$ is zero unless $d | s$.
Moreover, the expression
$\frac{[n-s]!_q}{([k-s]!_q)^2 [n-2k+s]!_q} \times \frac{[k-s]_q}{[n-k]_q[n-k+1]_q}$ is the $q$-hook length formula
corresponding to the partition $\lambda = (k-s, k-s, n-2k+s) \vdash n-s$.
As in the proof of Theorem~\ref{no-singletons-csp}, the Murnaghan-Nakayama rule 
implies that the evaluation of this polynomial at $q = \zeta^d$ is zero unless
$(k-s)(k-s-1) \equiv 0$ (mod $\frac{n}{d}$).  Putting these facts together, we get that the polynomial evaluation in
(\ref{evaluation}) is always a nonnegative integer, and therefore equal to the quantity in question.

With our Claim in hand, we can get the fixed point set size $|X^{r^d}|$ by summing the polynomial evaluations
in (\ref{evaluation}) over $s$.
Summing the unevaluated polynomial in
(\ref{evaluation}) over $s$ yields
\begin{align*}
\sum_{s = 0}^k q^{(k-s-1)(k-s)}& {n \brack s}_q \frac{[n-s]!_q}{([k-s]!_q)^2 [n-2k+s]!_q}  \times \frac{[k-s]_q}{[n-k]_q[n-k+1]_q} \\
&= \sum_{s = 0}^k q^{(k-s-1)(k-s)} {n \brack k}_q 
\frac{[k]!_q [n-k]!_q [k-s]_q}{[s]!_q ([k-s]!_q)^2 [n-2k+s]!_q [n-k]_q [n-k+1]_q}  \\
&= \frac{1}{[n-k+1]_q} {n \brack k}_q \sum_{s = 0}^k q^{(k-s-1)(k-s)} {k \brack s}_q {n-k-1 \brack k-s-1}_q \\
&= \frac{1}{[n-k+1]_q} {n \brack k}_q {n-1 \brack k-1}_q \\
&= \frac{1}{[n]_q} {n \brack k}_q {n \brack k-1}_q \\
&= X(q),
\end{align*}
where we used the quantum Chu-Vandermonde identity in Theorem~\ref{chu} for the third equality.
The desired equality $|X^{r^d}| = X(\zeta^d)$ follows.
\end{proof}

We conclude this section by showing (for the sake of self-containment) how to derive the Catalan CSP in Theorem~\ref{catalan-csp} from 
the Narayana CSP
Theorem~\ref{narayana-csp}. 
We recall the 
statement of Theorem~\ref{catalan-csp}.  

\noindent {\bf Theorem 1.1.}  
{\it (Reiner-Stanton-White \cite{RSWCSP}) Let $n \geq 0$, let $X$ be the set of noncrossing partitions of $[n]$,
and let $C = \ZZ_n$ act on $X$ by rotation.
The triple $(X, C, X(q))$ exhibits the cyclic sieving phenomenon, where 
\begin{equation*}
X(q) = \Cat_q(n) = \frac{1}{[n+1]_q}{2n \brack n}_q
\end{equation*}
is the $q$-Catalan number.}

\begin{proof}
Let $r: X \rightarrow X$ be the rotation operator and let $1 \leq d \leq n$ with $d | n$.  Let $\zeta \in \CC$ be a primitive $n^{th}$ root of unity.

{\bf Claim:}  {\em The number of partitions in $X$ with exactly $k$ blocks fixed by $r^d$ is the polynomial evaluation
\begin{equation}
\label{catalan-csp-fudge}
\left[ \frac{q^{k(k-1)}}{[n]_q} {n \brack k}_q {n \brack k-1}_q \right]_{q = \zeta^d}.
\end{equation}}

In fact, this claim is the original form of the Narayana CSP as proven by Reiner, Stanton, and White \cite{RSWCSP}.
Given our version of Theorem~\ref{narayana-csp}, this claim is equivalent to showing that $k(k-1) \equiv 0$ (mod $\frac{n}{d}$) whenever
the above expression is nonzero.
This can be seen combinatorially from the fact that the action of $\frac{n}{d}$-fold rotation breaks the blocks of any noncrossing partition of $[n]$ into
free orbits, with the possible exception of one singleton orbit.  
Alternatively, Theorem~\ref{sets-csp} implies that both of the $q$-binomials
${n \brack k-1}_q$ and ${n \brack k}_q$ evaluate to zero at $q = \zeta^d$ unless $k(k-1) \equiv 0$ (mod $\frac{n}{d}$).  Since
$[n]_q$ has a zero of order at most one at $q = \zeta^d$, we get that the overall expression
(\ref{catalan-csp-fudge}) vanishes unless $k(k-1) \equiv 0$ (mod $\frac{n}{d}$).

With our Claim in hand, the theorem follows from summing the unevaluated polynomial 
in (\ref{catalan-csp-fudge}) over $k$.  We have that
\begin{align*}
\sum_{k = 0}^n \frac{q^{k(k-1)}}{[n]_q} {n \brack k}_q {n \brack k-1}_q &= 
\frac{1}{[n+1]_q} \sum_{k = 0}^n q^{k(k-1)} {n+1 \brack k}_q {n-1 \brack n-k}_q \\
&= \frac{1}{[n+1]_q} {2n \brack n}_q,
\end{align*}
where we used Theorem~\ref{chu} in the second equality.
\end{proof}

\section{Closing Remarks}
\label{Closing}

In this paper we used skein relations to define an action of $\symm_n$ on the $\QQ$-vector space $V(n)$ spanned by noncrossing partitions of $[n]$.
Moreover, we defined a linear projection $p: P(n) \twoheadrightarrow V(n)$, where
$P(n)$ was the $\QQ$-vector space spanned by all set partitions of $[n]$. 
This projection became an epimorphism of $\symm_n$-modules when $P(n)$ was endowed with a slightly sign-twisted version of its usual
$\symm_n$-module structure.  We propose some possible directions in which to generalize our construction.

For a parameter $q$, the {\it Hecke algebra} $H_n(q)$ attached to $\symm_n$ is the $\CC$-algebra with generators
 $\{T_1, T_2, \dots, T_{n-1}\}$ and relations
 \begin{equation}
 \begin{cases}
 T_i^2 = (1-q)T_i + q, \\
  T_i T_j = T_j T_i, & \text{for $|i - j| > 1$,} \\
 T_i T_{i+1} T_i = T_{i+1} T_i T_{i+1}.
 \end{cases}
 \end{equation}
 At $q = 1$, this specializes to the symmetric  group algebra $\CC \symm_n$.  At $q = 0$,
 one obtains the {\it $0$-Hecke algebra} which has seen a great deal of recent attention in the theory of quasisymmetric functions.
 For generic values of $q$, it is known that the algebras $\CC \symm_n$ and $H_n(q)$ are isomorphic, but writing down such an isomorphism 
 explicitly is a difficult problem.
 
 \begin{problem}
 \label{q-analog}
 Construct a quantum analog of the modules $P(n)$ and $V(n)$ by defining $H_n(q)$ modules $P(n)_q$ and $V(n)_q$ which specialize to 
 $P(n)$ and $V(n)$ at $q = 1$.  Find an epimorphism $p: P(n)_q \twoheadrightarrow V(n)_q$ of $H_n(q)$-modules relating these structures.
 \end{problem}
 
 Some motivation for Problem~\ref{q-analog} comes from Kazhdan-Lusztig theory.  The KL basis of $\CC \symm_n$ is naturally the $q = 1$ specialization
 of the KL basis of $H_n(q)$.  Moreover, the KL cellular basis for $\CC \symm_n$-irreducibles is the $q = 1$ specialization of the 
 KL cellular basis of $H_n(q)$-irreducibles.   
 Given the important role played by KL bases in cyclic sieving, it would be interesting to develop a $q$-generalization of our new skein structures.
 
 A second possible direction of generalization comes from Coxeter-Catalan theory.  Let $W$ be an irreducible real reflection group 
 with reflection representation $V$ 
 and let 
 $c \in W$ be a choice of Coxeter element.
 Let $\mathcal{L}$ be the lattice of flats of the reflection arrangement for $W$ in $V$.  The flats in $\mathcal{L}$ give a $W$-analog of set partitions.
 Moreover, those flats $X \in \mathcal{L}$ which have the form $X = V^w := \{v \in V \,:\, w.v = v\}$
 for some $w$ belonging to the absolute order interval $[e, c]_T$ are the $W$-analog
 of noncrossing set partitions.  
 The group $W$ acts on the set $\mathcal{L}$ by translation and
 the set $NC(W)$ of noncrossing flats is stabilized by the cyclic subgroup $\langle c \rangle$ of $W$.
 Let $P(W)$ denote the $\QQ$-vector space spanned by $\mathcal{L}$ and let $V(W)$ denote the 
 $\QQ$-vector subspace spanned by $NC(W)$.

\begin{problem}
\label{W-analog}
Endow $P(W)$ and $V(W)$ with $W$-module structures so that there is an epimorphism
$p: P(W) \twoheadrightarrow V(W)$ which is also a projection.
\end{problem}
 
While we have no conjectural approach for solving Problem~\ref{W-analog} in a case free way, 
when $W$ is the hyperoctohedral group $W(B_n)$ we can identify $V(W)$ and $P(W)$ with the subspaces of $V(2n)$ and $P(2n)$ 
spanned by centrally symmetric basis elements with at most one centrally symmetric block.  This combinatorial model
may allow for ``centrally symmetric" analogs of our skein relations in defining an action of $W(B_n)$ on $V(W)$.

The third direction of generalization we propose is topological.  One can think of noncrossing partitions of $[n]$ 
as embeddings of intervals and loops into the disc $D$.
It is natural to ask what happens when one replaces the  $D$ with a more interesting surface $S$.

Let $S$ be a surface equipped with a finite collection of marked boundary points $M \subset \partial S$.  Define an {\it (S, M)-noncrossing partition} $\pi$ to be an embedding
 $\pi: S^1 \amalg \cdots \amalg S^1 \amalg I \cdots \amalg I \hookrightarrow S$ of a disjoint union of some number of circles and intervals into $S$ such that
\begin{itemize}
\item for any interval $I = [0, 1]$,  we have $\pi(0), \pi(1) \in M$ and $\pi(x) \in \mathrm{Interior}(S)$ for $0 < x < 1$, and
\item for any circle $S^1$, we have that $\pi(S^1)$ contains at least three elements of $M$ and $\pi(S^1) \cap \partial S \subseteq M$.
\end{itemize}
We consider $(S,M)$-noncrossing partitions $\pi$ up to ambient isotopy.
We also consider two $(S, M)$-noncrossing partitions equivalent if they can be obtained from one another by rotating any
boundary disc of $S$ by some multiple of $2 \pi$ radians.  
A noncrossing partition 
$\pi: S^1 \amalg I \amalg I \hookrightarrow S$
on the pair of pants $S$ is shown below.  

\begin{center}
\includegraphics[scale = 0.3]{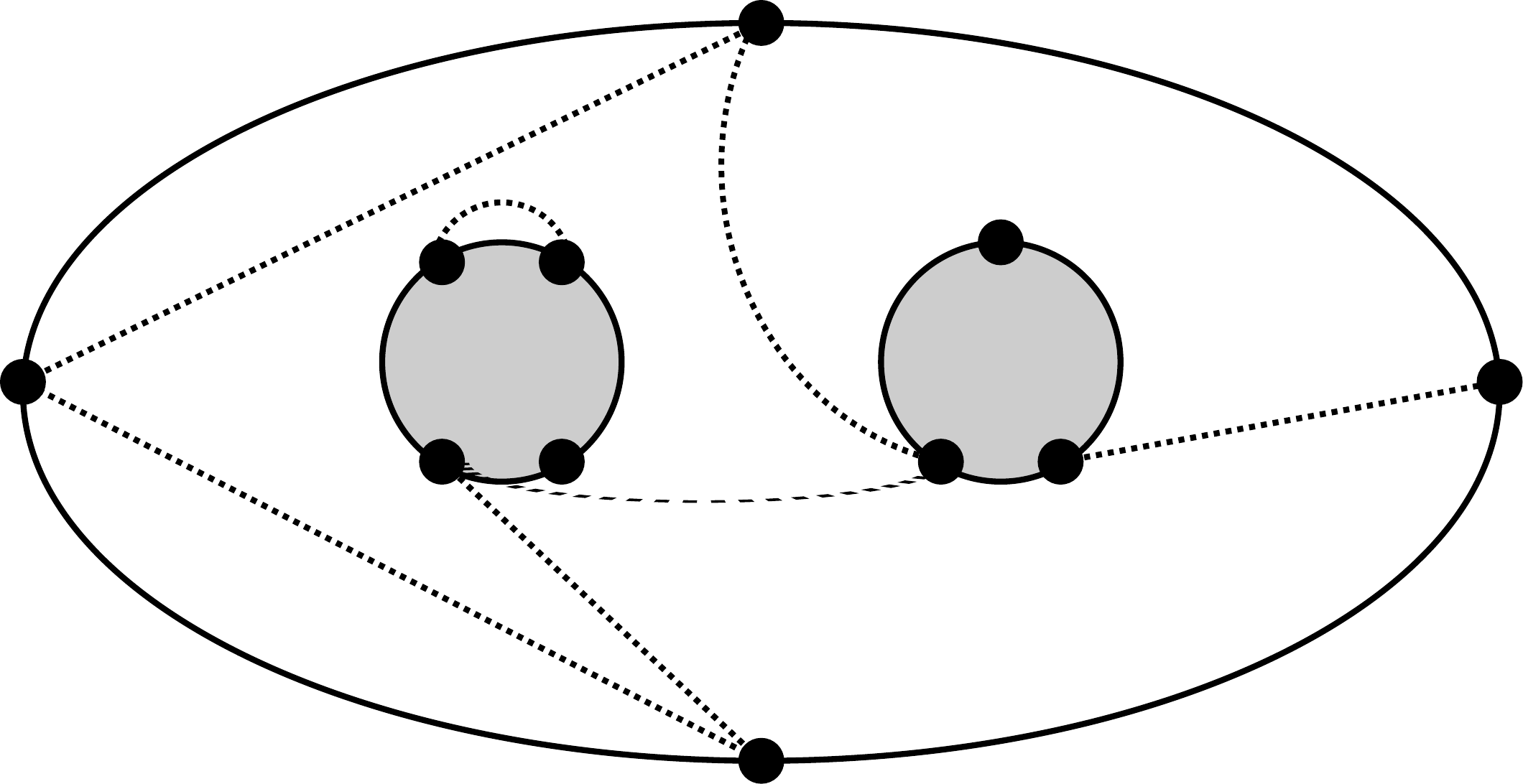}
\end{center}

Let $V(S, M)$ denote the vector space of formal $\QQ$-linear combinations of $(S, M)$-noncrossing partitions.  
If $C$ is a boundary component of $S$ with at least three marked points and $p, p'$ are clockwise consecutive marked points on $C$,
we can define a linear operator $\tau_p: V(S, M) \rightarrow V(S, M)$ by mimicking the definition of the $\tau_i$ operators in
Section~\ref{Symmetric}.  
Specifically, we swap $p$ and $p'$ in a given $(S, M)$-noncrossing partition $\pi$ and use the skein map $\sigma$ to  resolve any crossings
One ignores any terms arising from application of the skein map $\sigma$ resulting in interior loops.
The fact that $C$ has at least three marked points removes potential topological ambiguity in the direction used to swap $m$ and $m'$.

\begin{conjecture}
\label{topology}
Let $(S, M)$ be as above and assume that every boundary component of $S$ contains at least three marked points.  The operators 
$\{ \tau_p \,:\, p \in M \}$  define an action of
$\symm_{m_1} \times \cdots \times \symm_{m_k}$ on $V(S, M)$, where 
$\partial S = C_1 \amalg \cdots \amalg C_k$ and $|C_i \cap M| = m_i$.
\end{conjecture}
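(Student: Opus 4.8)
The plan is to reduce Conjecture~\ref{topology} to the already-established Coxeter relations for the operators $\tau_i$ on $V(n)$ (Lemmas~\ref{first-coxeter-relation}, \ref{second-coxeter-relation}, \ref{third-coxeter-relation}) by exploiting the local nature of each $\tau_p$. The operators attached to marked points on \emph{distinct} boundary components $C_i$ and $C_j$ obviously commute, since swapping a clockwise-consecutive pair on $C_i$ only touches strands near that pair and leaves the combinatorics near $C_j$ untouched; this gives the product structure $\symm_{m_1}\times\cdots\times\symm_{m_k}$ once we know each individual factor $\symm_{m_i}$ acts. So the heart of the matter is to show, for a fixed boundary component $C$ with marked points $p_1,\dots,p_m$ listed clockwise, that the operators $\tau_{p_1},\dots,\tau_{p_m}$ (where $\tau_{p_m}$ is the ``wrap-around'' operator swapping $p_m$ and $p_1$) satisfy the Coxeter relations for $\symm_m$.

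First I would make precise the ``locality'' principle analogous to Lemma~\ref{local-property}: given an $(S,M)$-noncrossing partition $\pi$ and a marked point $p$ on $C$ with clockwise successor $p'$, the element $\tau_p(\pi)\in V(S,M)$ is obtained by resolving the crossing(s) created near the arc between $p$ and $p'$, and all strands not meeting a small disc neighborhood of that arc are unchanged. Because $C$ has at least three marked points, the two marked points $p,p'$ genuinely bound a well-defined ``inside'' region of a collar of $C$, so there is no topological ambiguity in the direction of the swap, and any loop that gets created lies in the interior and is discarded. This means that each relation $\tau_{p_i}^2 = 1$, $\tau_{p_i}\tau_{p_j} = \tau_{p_j}\tau_{p_i}$ for non-consecutive $i,j$, and $\tau_{p_i}\tau_{p_{i+1}}\tau_{p_i} = \tau_{p_{i+1}}\tau_{p_i}\tau_{p_{i+1}}$ can be checked by looking only at the configuration of strand-endpoints in a neighborhood of the relevant one, two, or three consecutive marked points $p_i, p_{i+1}, p_{i+2}$ on $C$, together with the abstract way the remaining strand-ends are paired up ``at infinity'' (i.e., connected through the complement of the collar, possibly to each other, possibly to other boundary components). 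The key observation is that this local data is exactly the same kind of case-by-case combinatorial input processed in the proofs of Lemmas~\ref{first-coxeter-relation}--\ref{third-coxeter-relation}: near the vertices in question one sees some collection of strand-ends of valence $0$, $1$, or $2$, and the global routing of those strands is irrelevant to the verification, which only compares which \emph{blocks} (equivalence classes of strand-ends) get merged or split. Thus each case already appears — possibly in a ``degenerate'' guise where a block lies entirely outside the collar — in Section~\ref{Symmetric}, and the signs match because the $\tau_p$ operators use the same $\sigma_4$-style resolution, discarding loop terms exactly as in the definition of $\tilde{\tau}_i$ and the filtration discussion at the end of Section~\ref{Symmetric}.

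The one genuinely new ingredient, and the step I expect to be the main obstacle, is the wrap-around relations involving $\tau_{p_m}$ together with $\tau_{p_1}$ and $\tau_{p_{m-1}}$: these are the analogues of the ``affine'' braid relations $\tau_{m-1}\tau_m\tau_{m-1} = \tau_m\tau_{m-1}\tau_m$ and $\tau_1\tau_m\tau_1 = \tau_m\tau_1\tau_m$. In the disc case one handles the affine transposition $s_n$ via conjugation by the long cycle (Corollary~\ref{affine-transposition-action}), which rests on Theorem~\ref{long-cycle-action}; on a general surface there is no long cycle available and no analogue of the rotation symmetry, so the wrap-around braid relations must instead be verified directly by the same local case analysis, now carried out in a collar neighborhood of the boundary arc joining $p_m$ to $p_1$. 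Fortunately, locality again saves us: since $C$ has $m\geq 3$ marked points, the triple $\{p_{m-1},p_m,p_1\}$ (resp.\ $\{p_m,p_1,p_2\}$) consists of three \emph{distinct} points spanning a contractible piece of the collar, so the relevant configuration is combinatorially indistinguishable from the three-consecutive-vertex situation in Lemma~\ref{third-coxeter-relation} with indices $i,i+1,i+2$, and the verification goes through verbatim. The plan, then, is: (1) state and prove the surface locality lemma; (2) deduce commutation across distinct boundary components; (3) reduce all same-component relations to the local triple/pair configurations; (4) invoke the computations of Lemmas~\ref{first-coxeter-relation}--\ref{third-coxeter-relation} (including their degenerate cases) to conclude. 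The subtlety to watch throughout is making the ``resolve crossings near $p$, discard interior loops'' operation well-defined up to the stated isotopy-and-boundary-rotation equivalence — in particular checking that rotating a boundary disc by $2\pi$ does not alter the action of the $\tau_p$'s — which is where the hypothesis $m_i\geq 3$ for \emph{every} boundary component is used essentially.
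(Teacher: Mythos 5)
A point of context first: the paper does not prove this statement at all --- it is stated in the closing remarks precisely as a conjecture and left open, so there is no proof of record to compare yours against, and your proposal has to stand on its own as a complete argument. As written, it does not. The central gap is your locality claim that ``the global routing of those strands is irrelevant to the verification, which only compares which blocks get merged or split.'' That is true on the disc exactly because a noncrossing partition of a disc is determined by its block structure, and the case analyses of Lemmas~\ref{first-coxeter-relation}, \ref{second-coxeter-relation}, and \ref{third-coxeter-relation} identify and cancel terms by comparing blocks alone. On a general surface the basis elements of $V(S,M)$ are isotopy classes of embeddings: two terms with identical incidence data at the marked points can be non-isotopic (arcs can be routed around handles or around other boundary components), so terms that coincide or cancel in the disc computations need not coincide or cancel in $V(S,M)$. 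Consequently the disc lemmas cannot be ``invoked verbatim''; the skein map $\sigma$ itself must first be re-founded as a smoothing operation on embedded arcs, and the sixteen-term verifications redone while tracking isotopy classes, which is where the real difficulty of the conjecture lives. A second, related gap is the treatment of interior loops: the disc computations never produce closed loops, so they say nothing about whether discarding loop terms (rather than, say, trading them for a factor of $-2$ as in $TL_n(-2)$) is compatible with the braid relations; you assert that ``the signs match'' but this is precisely what needs proof.

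There is also a structural gap at the end of your plan. Even if you verified every involution, every commutation, and every cyclically adjacent braid relation among the $m_i$ operators attached to a single boundary circle (including the wrap-around ones), those are the defining relations of the \emph{affine} symmetric group on $m_i$ generators, which is infinite; they do not by themselves give an action of $\symm_{m_i}$. In the disc case the compatibility of the extra generator $(1,n)$ with the $\symm_n$-action generated by $\tau_1,\dots,\tau_{n-1}$ is not a formal consequence of local relations --- it rests on Theorem~\ref{long-cycle-action} (the long cycle acts as signed rotation) via Corollary~\ref{affine-transposition-action}. You correctly observe that no analogue of rotation is available on a general surface, but you do not supply a substitute argument showing the representation factors through the finite quotient $\symm_{m_1}\times\cdots\times\symm_{m_k}$; step (4) of your outline therefore does not close the proof even granting all the local verifications.
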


For example, the marked pair of pants shown above should yield a representation of 
$\symm_4 \times \symm_4 \times \symm_3$, with one symmetric group factor for each boundary circle.
If Conjecture~\ref{topology} is true, it would be interesting to determine the isomorphism type of the module $V(S, M)$.

The original motivation for this project was Pechenik's CSP in Theorem~\ref{no-singletons-csp}.  Although we phrased our cyclic sieving results 
in terms of rotation acting on noncrossing partitions, Pechenik's paper is more focused on an equivalent action of K-promotion on 
rectangular increasing tableaux.  The results of Pechenik's paper and this paper can be viewed as providing brute force and algebraic proofs, respectively,
for a CSP involving K-promotion acting on two-row rectangular increasing tableaux.  
The case of general rectangles could provide the fourth generalization of our module $V(n)$; let us define these terms.

Given a partition $\lambda \vdash n$ and a positive integer $k$, a {\em $k$-increasing tableau of shape $\lambda$}
is a {\em surjective} filling $T: \lambda \rightarrow \{1, 2, \dots, k\}$ of the boxes of the Ferrers diagram of $\lambda$ with the numbers
$1, 2, \dots, k$ in such a way that entries strictly increase down columns and across rows.  We let 
$\mathrm{Inc}_k(\lambda)$ denote the set of $k$-increasing tableaux of shape $\lambda$.  For $\lambda \vdash n$ we have that 
$\mathrm{Inc}_k(\lambda)$ is the set of standard tableaux of shape $\lambda$ when $k = n$.  We also have that 
$\mathrm{Inc}_k(\lambda)$ is empty when $k > n$.
When $k = 5$ and $\lambda = (3,3)$, the five tableaux in $\mathrm{Inc}_5(3,3)$ are listed below.

\begin{center}
\begin{Young}
1 & 2 & 4 &,   &1  & 2 & 3 &,  & 1 & 2 & 4   &,  & 1 & 2 & 3 &,  & 1 & 3 & 4\cr
2 & 3 & 5 &, , &2  & 4 & 5 &,,  & 3 & 4 & 5  &,, & 3 & 4 & 5 &,, & 2 & 4 & 5
\end{Young}
\end{center}

Analogous to the key role played by standard tableaux in the structure of the cohomology ring of the Grassmannian, the more general
increasing tableaux play an important role in the K-theory of the Grassmannian.   {\it Jeu-de-taquin promotion} is an operator $j$ defined 
using jeu-de-taquin slides on 
standard  tableaux of a fixed shape $\lambda$. 
For a partition $\lambda$ and a positive integer $k$, Thomas and Yong \cite{TYJDT} defined 
a generalized version of jeu-de-taquin on increasing tableaux which naturally leads to a {\it K-promotion}
operator $j_K: \mathrm{Inc}_k(\lambda) \rightarrow \mathrm{Inc}_k(\lambda)$.

The operator $j_K: \mathrm{Inc}_k(\lambda) \rightarrow \mathrm{Inc}_k(\lambda)$ is defined as follows.  
Let $T \in \mathrm{Inc}_k(\lambda)$ be an increasing tableau.  
The increasing tableau $j_K(T)$ is defined by replacing every instance of $k$ in $T$ with a hole,
using jeu-de-taquin slides to move the resulting hole to the northwest corner while preserving increase across rows and down columns,
increasing every entry by $1$, and filling the hole with $1$.  To preserve the increasing condition, holes and/or entries may have to be 
cloned and/or merged during the sliding procedure.
An example of $K$-promotion acting on an increasing tableau $T \in \mathrm{Inc}_7(4, 4, 3)$ is shown below.

\begin{center}
\begin{small}
\begin{Young}
, \cr
, $T =$ \cr
, \cr
\end{Young}
\hspace{0.05in}
\begin{Young}
1 &2  &3 & 4 \cr 
2 &3 & 6 &7 \cr
4& 5 & 7\cr
\end{Young}
\begin{Young}
, \cr
, $\mapsto$ \cr
, \cr
\end{Young}
\begin{Young}
1 &2  &3 & 4 \cr 
2 &3 & 6 &$\bullet$ \cr
4& 5 & $\bullet$ \cr
\end{Young}
\begin{Young}
, \cr
, $\mapsto$ \cr
, \cr
\end{Young}
\begin{Young}
1 &2  &3 & 4 \cr 
2 &3 & $\bullet$ &6 \cr
4& 5 & 6\cr
\end{Young}
\begin{Young}
, \cr
, $\mapsto$ \cr
, \cr
\end{Young}
\begin{Young}
1 &2  &$\bullet$ & 4 \cr 
2 & $\bullet$ & 3 &6 \cr
4& 5 & 6\cr
\end{Young} 

\vspace{0.05in}

\hspace{0.6in}
\begin{Young}
, \cr
, $\mapsto$ \cr
, \cr
\end{Young}
\begin{Young}
1 & $\bullet$  & 2 & 4 \cr 
$\bullet$ & 2 & 3 &6 \cr
4& 5 & 6\cr
\end{Young}
\begin{Young}
, \cr
, $\mapsto$ \cr
, \cr
\end{Young}
\begin{Young}
$\bullet$ & 1  & 2 & 4 \cr 
1 & 2 & 3 &6 \cr
4& 5 & 6\cr
\end{Young}
\begin{Young}
, \cr
, $\mapsto$ \cr
, \cr
\end{Young}
\begin{Young}
$\bullet$ & 2  & 3 & 5 \cr 
2 & 3 & 4 & 7 \cr
5 & 6 & 7\cr
\end{Young}
\begin{Young}
, \cr
, $\mapsto$ \cr
, \cr
\end{Young}
\begin{Young}
1 & 2  & 3 & 5 \cr 
2 & 3 & 4 & 7 \cr
5 & 6 & 7\cr
\end{Young}
\begin{Young}
, \cr
, $= j_K(T)$ \cr
, \cr
\end{Young}
\end{small}
\end{center}

When $k = n$, no cloning or merging is required and the $K$-promotion operator $j_K$ specializes to the usual promotion operator
$j$ on standard tableaux.  
While the action of $j$ on standard tableaux can be somewhat chaotic for general partitions $\lambda \vdash n$, this action 
is much better understood when $\lambda$ is a rectangular partition \cite{FK, RhoadesJDT}.
It is natural to generalize by asking for the cycle structure of the action of $j_K$ on 
$\mathrm{Inc}_k(\lambda)$ for a rectangular partition $\lambda \vdash n$.

Suppose $\lambda = (b, b) \vdash n$ is a two-row rectangle.  Pechenik \cite{Pechenik} described a bijection from the set
$\mathrm{Inc}_{b + c}(b, b)$ of increasing tableaux to  the set $NC(b+c, c, 0)$ of noncrossing partitions without singleton blocks which 
sends K-promotion to rotation.
Theorem~\ref{no-singletons-csp} therefore gives a CSP for the action of K-promotion on the set $\mathrm{Inc}_k(\lambda)$
for any value of $k$ when $\lambda$ is a two-row rectangle.  
On the other hand, Kazhdan-Lusztig theory can be used \cite{RhoadesJDT} to give a CSP for the action of ordinary promotion
on the set of standard tableaux of shape $\lambda$, where $\lambda$ is an arbitrary rectangle.  The following problem asks for 
a common generalization.

\begin{problem}
\label{K-promotion-problem}
Let $\lambda \vdash n$ be a rectangular partition and let $k$ be a positive integer.
Give a CSP for the action of K-promotion on $k$-increasing tableaux of shape $\lambda$.
Prove this CSP using representation theoretic means.
\end{problem}

One complication involved with the action of K-promotion is that 
$(j_K)^k(T) = T$ does {\it not} hold for all rectangular $k$-increasing  tableaux $T$.
On the other hand, Pechenik \cite{Pechenik} described a subclass of $k$-increasing rectangular tableaux $T$ in terms of 
a K-analog of evacuation for which
$(j_K)^k(T) = T$ always holds.

In this paper, we gave algebraic proofs for CSPs related to the action of rotation on
noncrossing partitions of $[n]$.
It is natural to ask for a Fuss-analog of these constructions involving CSPs for
the action of rotation on
{\it $k$-divisible} noncrossing partitions of $[kn]$.
A CSP for the action of rotation on all $k$-divisible
noncrossing partitions of $[kn]$ was proven by brute force
by Krattenthaler and M\"uller \cite{KM} and later received an algebraic proof in \cite{RhoadesParking}.
A Fuss-analog of the skein basis ought to give an algebraic proof of CSPs involving actions
on $k$-divisible noncrossing partitions with a fixed number of total blocks (and perhaps also
a fixed number of $k$-element blocks).

\section{Acknowledgements}
\label{Acknowledgements}

The author thanks Drew Armstrong, Michelle Bodnar,
 Rosa Orellana, Oliver Pechenik, Vic Reiner,  Heather Russell, and Hans Wenzl for many helpful discussions.
The author was partially supported by NSF Grant DMS-1068861.

\end{document}